\documentclass[11pt,leqno]{amsart}
\usepackage{amsmath,amssymb,amsthm,booktabs,geometry,enumitem,array,tabularx}
\usepackage[colorlinks=true,linkcolor=blue,citecolor=blue,urlcolor=blue]{hyperref}
\numberwithin{equation}{section}
\newtheorem{theorem}{Theorem}
\newtheorem{corollary}[theorem]{Corollary}
\newtheorem{lemma}{Lemma}[section]
\newtheorem{proposition}[lemma]{Proposition}
\newtheorem{cor}[lemma]{Corollary}
\theoremstyle{definition}
\newtheorem{fact}[lemma]{Fact}
\theoremstyle{remark}
\newtheorem{remark}[lemma]{Remark}
\newcommand{\Z}{\mathbb{Z}}
\newcommand{\R}{\mathbb{R}}
\newcommand{\C}{\mathbb{C}}
\newcommand{\N}{\mathbb{N}}
\newcommand{\Q}{\mathbb{Q}}
\newcommand{\Hh}{\mathbb{H}}
\newcommand{\dd}{\,\mathrm{d}}
\newcommand{\Mm}{\mathcal{M}}
\newcommand{\J}{\mathcal{J}}
\newcommand{\Qf}{\mathcal{Q}}
\newcommand{\Rr}{\mathcal{R}}
\newcommand{\Tr}{\mathrm{Tr}}
\newcommand{\h}{\mathfrak{h}}
\newcommand{\PSL}{\mathrm{PSL}_2(\Z)}
\newcommand{\Gt}{\Gamma_\vartheta}
\newcommand{\vol}{\mathrm{vol}}
\newcommand{\Ee}{\mathfrak{E}}
\newcommand{\MM}{\mathfrak{M}}
\newcommand{\fq}{\mathfrak{q}}
\newcommand{\fQ}{\mathfrak{Q}}
\newcommand{\lo}{\ell_{\mathrm{odd}}}

\title[A spectral approach]{Solutions of $x_1^2+x_2^2-x_3^2=D$ with small $x_3$:\\ a spectral approach}
\author{Stephan Baier}
\address{Stephan Baier, Ramakrishna Mission Vivekananda Educational and Research Institute, Department of Mathematics, G.~T.~Road, PO Belur Math, Howrah, West Bengal 711202, India}
\email{stephanbaier2017@gmail.com}
\subjclass[2020]{11E20, 11F72, 11E16, 11F12, 11F67}
\keywords{indefinite ternary quadratic forms, binary quadratic forms, geodesic periods, cycle integrals, Hecke operators, theta group, spectral theory}
\date{16 October 2026}

\begin{document}
\begin{abstract}
We count the integral solutions of $x_1^2+x_2^2-x_3^2=D$ weighted by $\Phi(x_3/M)$, for $M$ smaller than $\sqrt D$. Such solutions correspond to binary quadratic forms of discriminant $D$, and the count becomes a sum, over the geodesics of discriminant $D$, of a function of their distance to the point $i$. We evaluate it by the spectral theory of automorphic forms. Throughout, $\theta=7/64$.

For square $D=n^2$ the geodesics run from cusp to cusp, and Hecke theory reduces them to the single geodesic from $0$ to $\infty$. For every $n$ and both parities of $x_3$, we obtain an asymptotic formula with error $O(n^{1+\theta+\varepsilon}M^{-1/2})$, using the Kim--Sarnak bound and the large sieve of Deshouillers and Iwaniec (and the theta group for the parity of $x_3$). It is non-trivial for $M\ge n^{71/96+\varepsilon}$. This improves the range $n^{5/6+\varepsilon}$ of the elementary method, and confirms the corrected main term of the author's earlier work.

For non-square $D=f^2D_0$ the geodesics are closed. A Hecke relation for their traces reduces everything to the fundamental discriminant $D_0$. We obtain an error $O(D^{\varepsilon}f^{1+\theta}D_0^{2/3}M^{-1/2})$, using the formulas of Duke--Imamo\={g}lu--T\'oth and Baruch--Mao together with the cubic moment bound of Young and of Andersen--Wu. For $D\equiv5\bmod 8$ this counts all solutions, and the range $M\ge D^{1/2-1/1332}$ of Friedlander and Iwaniec improves to $M\ge D^{4/9+\varepsilon}$.

This paper was written in collaboration with Claude (Opus~5.5, High), an AI model developed by Anthropic; see Subsection~\ref{sec:contrib} for the respective contributions.
\end{abstract}
\maketitle

\section{Introduction and results}

\subsection{The problem}\label{sec:problem}
Let $Q(x)=x_1^2+x_2^2-x_3^2$, let $D\ge1$, and let $\Phi:\R\to\C$ be smooth and even, with support in $[-2,2]$. For $1\le M\le\sqrt D$ and $e\in\{0,1\}$ put
\[
\Rr_e(D)=\Rr_e(D;M)=\sum_{\substack{x\in\Z^3,\ Q(x)=D\\ x_3\equiv e\bmod 2}}\Phi\Big(\frac{x_3}M\Big),\qquad \Rr(D)=\Rr_0(D)+\Rr_1(D).
\]
For $M$ of size $\sqrt D$ the asymptotic behaviour of $\Rr(D)$ follows from classical results. The question is how far below $\sqrt D$ one can take $M$.
\begin{itemize}
\item Friedlander and Iwaniec \cite{FrIw} treated square-free $D\equiv5\bmod 8$ with $M\ge D^{1/2-1/1332}$, using the spectral theory of Weyl sums for quadratic roots \cite{DFI}.
\item For squares $D=n^2$ the author \cite{B24} gave an elementary treatment. Its main term was corrected, and the range improved to $M\ge n^{5/6+\varepsilon}$, in \cite{B26}.
\end{itemize}
In this paper we treat both square and non-square $D$ by one spectral method.

The starting point is the classical correspondence between solutions of $Q(x)=D$ with $x_2\equiv x_3\bmod 2$ and binary quadratic forms $[A,B,C]$ of discriminant $D$, via $x=(B,A-C,A+C)$. Under this correspondence, $x_3=A+C=\sqrt D\sinh r$, where $r$ is the signed distance from $i$ to the geodesic of the form. The count is therefore a sum over geodesics of discriminant $D$, and we evaluate it by the pre-trace formula.
\begin{itemize}
\item \emph{Square $D=n^2$.} The geodesics run from cusp to cusp. They are Hecke translates of the single geodesic from $0$ to $\infty$, so their periods factor into Hecke eigenvalues at the divisors of $n$, times one period. That period is a central value $L(\frac12,u_j)$ and does not depend on $n$.
\item \emph{Non-square $D=f^2D_0$.} The geodesics are closed. By a Hecke relation (Theorem~\ref{thm:heckerel}), the traces at $D$ factor into Hecke eigenvalues at $f$, twisted by $\chi_{D_0}$, times the traces at $D_0$. The latter are controlled by $L(\frac12,u_j)L(\frac12,u_j\otimes\chi_{D_0})$.
\end{itemize}
Formally the square case is the degenerate case $D_0=1$ (Remark~\ref{rem:D01}), but there the geodesics are not closed and the periods must be regularised.

\subsection{Notation}\label{sec:notation}
Let $\Psi_0=\Psi_0(M)=\int_\R\Phi(x/M)\dd x$ and $\theta=\frac7{64}$. For a discriminant $\Delta$ (an integer $\equiv0,1\bmod4$) let $\Qf(\Delta)$ be the set of all integral binary quadratic forms $[A,B,C]$, i.e.\ $Au^2+Buv+Cv^2$, primitive or not, with $B^2-4AC=\Delta$. We put
\[
N(D)=N(D;M)=\sum_{[A,B,C]\in\Qf(D)}\Phi\Big(\frac{A+C}M\Big).
\]
For $D\equiv1\bmod8$ let $\Qf^\vartheta(D)=\{[A,B,C]\in\Qf(D):A\equiv C\equiv0\bmod 2\}$ and let $N_\vartheta(D)$ be the corresponding sum.

\emph{Square discriminants.} For $D=n^2$ we write
\[
\Psi_1=\Psi_1(n;M)=\int_\R\Phi\Big(\frac xM\Big)\log\sqrt{n^2+x^2}\,\dd x,\qquad \ell(n)=\sum_{d\mid n}\frac{\Lambda(d)}d,\qquad \lo(n)=\sum_{\substack{d\mid n\\ d\ \mathrm{odd}}}\frac{\Lambda(d)}d,
\]
and $v=v_2(n)$ (in \cite{B26}, $\lo(n)$ is denoted $\lambda(n)$). Let $E(z,s)=\sum_{\gamma\in\Gamma_\infty\backslash\PSL}\Im(\gamma z)^s$ be the Eisenstein series for the modular group, and put
\[
E^*(i)=\lim_{s\to1}\Big(E(i,s)-\frac{3/\pi}{s-1}\Big),\qquad c_1=\frac\pi3E^*(i)+\frac{\log2}3,
\]
\[
\beta_e(n)=\begin{cases}0,& n\ \text{odd},\ e=0,\\ \tfrac12,& n\ \text{odd},\ e=1,\\ 2-\tfrac32\,2^{-v},& n\ \text{even},\ e=0,\\ 0,& n\ \text{even},\ e=1.\end{cases}
\]

\emph{Non-square discriminants.} Every non-square discriminant $D>0$ can be written uniquely as $D=f^2D_0$ with $D_0$ a fundamental discriminant. Let $\chi_\Delta=(\frac\Delta\cdot)$ be the Kronecker symbol, and
\[
\h_{D_0}(f)=\frac1f\sum_{d\mid f}\mu(d)\chi_{D_0}(d)\,\sigma\Big(\frac fd\Big)=\prod_{p^a\parallel f}\Big(\sum_{i=0}^ap^{-i}-\chi_{D_0}(p)\sum_{i=1}^ap^{-i}\Big).
\]
Each local factor lies in $[1,3]$, so $1\le\h_{D_0}(f)\le3^{\omega(f)}\ll_\varepsilon f^{\varepsilon}$.

\subsection{Results for square discriminants}
\begin{theorem}\label{thm:sq}
For all $n\ge1$, $e\in\{0,1\}$, $1\le M\le n$ and $\varepsilon>0$,
\[
\Rr_e(n^2)=\frac4\pi\Big[\Psi_1+\big(c_1-\lo(n)-\beta_e(n)\log2\big)\Psi_0\Big]+O\big(n^{1+\theta+\varepsilon}M^{-1/2}\big).
\]
Consequently,
\[
\Rr(n^2)=\frac8\pi\big[\Psi_1+\big(c_1-\ell(n)\big)\Psi_0\big]-\frac{2^{1-v}\log2}{\pi}\Psi_0+O\big(n^{1+\theta+\varepsilon}M^{-1/2}\big).
\]
The implied constants depend only on $\Phi$ and $\varepsilon$.
\end{theorem}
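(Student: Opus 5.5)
First I translate the count into a count of quadratic forms. Solutions with $x_2\equiv x_3\bmod 2$ correspond bijectively to forms $[A,B,C]\in\Qf(n^2)$ via $x=(B,A-C,A+C)$. The solutions with $x_2\not\equiv x_3$ are handled by swapping $x_1\leftrightarrow x_2$; this is possible unless $x_1\equiv x_2\bmod 2$. Parity considerations then express $\Rr_e(n^2)$ as a combination of $N(n^2)$, $N(n^2/4)$ (when $n$ is even) and $N_\vartheta(n^2)$ (when $n$ is odd). This is where the constants $\beta_e(n)\log 2$ should appear.

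Next I write each count as a sum over $\PSL$-classes of forms. Each class contributes $\sum_{\gamma}k(\gamma z_Q)$-type sums, where the point-pair function is $k(r)=\Phi(n\sinh r/M)$ in the signed distance $r$ from $i$ to the geodesic of $Q$. Unfolding, the count becomes $\sum_Q\int_{\text{geodesic}} \tilde k$ evaluated at $i$. The sum is over the finitely many classes of discriminant $n^2$, each a cusp-to-cusp geodesic. By the Hecke relation, these geodesics are images of the line $0\to\infty$ under the matrices of determinant $d$ with $d\mid n$.

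I then spectrally expand the automorphic kernel $K(z)=\sum_Q \mathbf 1(\text{dist}(z,S_Q))$ at $z=i$ in terms of Maass cusp forms $u_j$ and Eisenstein series. The periods along $0\to\infty$ must be regularised. For $u_j$ the period is $\Lambda(\tfrac12,u_j)$ times a Gamma factor. For $E(z,s)$ it is $\zeta$-ratios, and the constant term must be regularised.
\begin{itemize}
\item The constant eigenfunction together with the regularised Eisenstein contribution near $s=1$ produces the main term $\frac4\pi[\Psi_1+(c_1-\lo(n))\Psi_0]$.
\item $\Psi_1$ comes from the logarithmic divergence of the cusp-to-cusp period.
\item $E^*(i)$ comes from the Laurent expansion at $s=1$.
\item $\lo(n)$ comes from differentiating the divisor sum $\sum_{d\mid n}d^{-s}\cdots$ attached to the Hecke factors.
\end{itemize}
For the theta-group variant I repeat the computation for $\Gt$, whose extra cusp yields the $\log2$ terms.

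The error term is
\[
\sum_j \lambda_j\text{-Hecke factor}(n)\,L(\tfrac12,u_j)\,u_j(i)\,h(t_j),
\]
plus the continuous analogue. Here $h$ is the Selberg transform of $k$. Since the width of $k$ is $M/n$, $h(t)$ is essentially of size $M/n\cdot$ (decaying) for $|t|\le n/M$, with additional oscillation. The Hecke factor at $n$ is bounded by $n^{\theta+\varepsilon}$ using the Kim--Sarnak bound.

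The remaining sum $\sum_{|t_j|\le T}|L(\tfrac12,u_j)u_j(i)|\,|h(t_j)|$ with $T=n/M$ I bound by Cauchy--Schwarz:
\begin{itemize}
\item the second moment of $L(\tfrac12,u_j)$ is $\ll T^{2+\varepsilon}$;
\item the local Weyl law gives $\sum|u_j(i)|^2\ll T^2$;
\item alternatively, the large sieve of Deshouillers--Iwaniec bounds the full bilinear expression.
\end{itemize}
Inserting the normalisation $n$ (the count is of size $n\cdot M/n\cdot\ldots$), this should give $n^{1+\theta+\varepsilon}M^{-1/2}$.

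I expect the main obstacle to be getting the exact power $M^{-1/2}$. The naive Cauchy--Schwarz loses, and one must exploit the oscillation of $h(t)$ in the transition range together with the large sieve. A secondary difficulty is the careful regularisation producing the exact constant $c_1$. The second formula then follows by summing $e=0,1$ and using $\ell(n)=\lo(n)+\sum_{k\le v}\log2/2^k$.
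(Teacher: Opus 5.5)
Your outline matches the paper's: parity reduction to $N$ and $N_\vartheta$, Hecke operators and M\"obius inversion down to the geodesic from $0$ to $\infty$, the pre-trace formula with regularised periods, and Kim--Sarnak for the Hecke factor. There are, however, two concrete problems.

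\emph{The error term.} You call the power $M^{-1/2}$ the main obstacle and say naive Cauchy--Schwarz loses. That diagnosis is wrong, so as written the error term is not established. No oscillation of $h$ is needed: the paper uses only $h(t)\ll\delta\min(1,(\delta|t|)^{-2})$ with $\delta=M/n$, and plain Cauchy--Schwarz gives exactly $M^{-1/2}$. The loss you see comes from replacing the period $\J_j$ by $L(\frac12,u_j)$ and discarding its archimedean factor. For $G=\Gamma$, Lemma~\ref{mellin} gives $|\J_j|^2=\frac{|\rho_j(1)|^2}{4\pi}|\Gamma(\frac14+\frac{it_j}2)|^4L(\frac12,u_j)^2$, and $\cosh(\pi t)|\Gamma(\frac14+\frac{it}2)|^4\asymp(1+|t|)^{-1}$. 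The right input, for both groups, is $\sum_{T<|t_j|\le2T}|\J_j|^2\ll T^{1+\varepsilon}$ (Proposition~\ref{periodmean}). The paper derives it from an approximate functional equation and the Deshouillers--Iwaniec large sieve for the coefficients $\rho_j(m)$, which also handles $\Gamma_\vartheta$ directly through $\Gamma_0(2)$. With the local Weyl law this gives $\sum_{T<|t_j|\le2T}|u_j(i)\J_j|\ll T^{3/2+\varepsilon}$. The Hecke factor is $A_j(n)=\sum_{d\mid n}\mu(n/d)\sqrt d\,\lambda_j(d)\ll n^{1/2+\theta+\varepsilon}$; the factor $n$ in the main term comes instead from $\sum_{d\mid n}\mu(n/d)\sigma(d)=n$. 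So the cuspidal part is
\[
\ll n^{\frac12+\theta+\varepsilon}\,\delta\sum_{T=2^k}T^{\frac32+\varepsilon}\min\big(1,(\delta T)^{-2}\big)\ll n^{\frac12+\theta+\varepsilon}\delta^{-\frac12-\varepsilon}\ll n^{1+\theta+2\varepsilon}M^{-1/2}.
\]
Dropping the factor $(1+|t_j|)^{-1/2}$ costs exactly $(n/M)^{1/2}$ at the critical scale $T\approx n/M$; that is the loss you attributed to Cauchy--Schwarz. The Eisenstein part is handled the same way, with the fourth moment of $\zeta$, and is only $O(n^{1/2+\varepsilon})$.

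\emph{The parity reduction.} It is wrong for even $n$. The solutions with all $x_i$ even are $x=2y$ with $Q(y)=(n/2)^2$, weight $\Phi(y_3/(M/2))$, and no parity condition on $y$. Hence $\Rr_0(n^2;M)=\Rr((n/2)^2;M/2)$ and $\Rr_1(n^2;M)=2N(n^2;M)-2\Rr((n/2)^2;M/2)$: the full count at $n/2$ enters, not $N(n^2/4)$. You must recurse on $v=v_2(n)$ down to the odd part $n/2^v$, where $N_\vartheta$ enters, halving $M$ at each step. The errors sum geometrically, since $(n/2^i)^{1+\theta}(M/2^i)^{-1/2}=2^{-i(1/2+\theta)}n^{1+\theta}M^{-1/2}$. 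The levels with $M/2^i<1$, however, lie outside the range of the asymptotic for $N_G$ and must be treated directly: only $|x_3|\le3$ contribute, and the divisor bound gives $O(n^\varepsilon)$. This recursion, with the matching identities for the main terms (Lemma~\ref{lem:mainid}), is what produces the $2^{-v}$ in $\beta_0(n)$ and in the second formula.

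A smaller point: $z\mapsto\sum_{\fq\in\Qf(n^2)}k(\mathrm{dist}(z,\ell_\fq))$ grows linearly in $y$ in the cusp, so it has no spectral expansion as it stands. The paper instead expands $T^G_dF_G$, with $F_G(w)=\sum_{\gamma\in G}K(d(\gamma i,w))$, which vanishes on $i\R_{>0}$ outside a compact range (Lemma~\ref{vanish}). It then obtains $\Psi_1$, $E^*_G$ and $\ell(n)$ from a contour shift in which the $\log Y$ terms of the truncation cancel.
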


\begin{corollary}\label{cor:sq}
If $n^{71/96+\varepsilon}\le M\le n$, then
\[
\Rr_e(n^2)=\frac4\pi(\log n)\int_\R\Phi\Big(\frac xM\Big)\dd x+O(M\log\log n)
\]
for $e=0,1$. Under the Ramanujan conjecture for Maass forms of level $1$ and $2$ ($\theta=0$), the same holds for $M\ge n^{2/3+\varepsilon}$.
\end{corollary}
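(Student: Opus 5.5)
Corollary~\ref{cor:sq} follows from Theorem~\ref{thm:sq} by sizing the main terms and comparing them with the error term. We rewrite $\Psi_1$ and bound the lower-order contributions.

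\emph{Step 1: expanding $\Psi_1$.} On the support of $\Phi(x/M)$ we have $|x|\le 2M\le 2n$, hence
\[
\log\sqrt{n^2+x^2}=\log n+\tfrac12\log\big(1+x^2/n^2\big)=\log n+O(1).
\]
It follows that
\[
\Psi_1=(\log n)\Psi_0+O\Big(\int_\R|\Phi(x/M)|\dd x\Big)=(\log n)\Psi_0+O(M),
\]
since both $\Psi_0$ and $\int|\Phi(x/M)|\dd x$ are $O(M)$.

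\emph{Step 2: bounding the constant terms.} The quantities $c_1$ and $\beta_e(n)\log2$ are $O(1)$, because $0\le\beta_e(n)\le2$. For $\lo(n)\le\ell(n)=\sum_{p^k\mid n}\frac{\log p}{p^k}$, we use
\[
\ell(n)\ll\sum_{p\mid n}\frac{\log p}{p}.
\]
This sum is largest when the prime divisors of $n$ are the smallest primes. Since $\omega(n)\ll\log n$, those primes are at most $P\ll\log n$, so by Mertens
\[
\ell(n)\ll\sum_{p\le P}\frac{\log p}{p}\ll\log P\ll\log\log n.
\]
Therefore every term in the bracket other than $\Psi_1$ contributes $O(M\log\log n)$. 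The $O(M)$ from Step~1 is absorbed into this.

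\emph{Step 3: the error term.} We need $n^{1+\theta+\varepsilon}M^{-1/2}\ll M$, which holds when $M\ge n^{\frac23(1+\theta+\varepsilon)}$. With $\theta=7/64$,
\[
\tfrac23\cdot\tfrac{71}{64}=\tfrac{71}{96},
\]
so after renaming $\varepsilon$ the hypothesis $M\ge n^{71/96+\varepsilon}$ makes the error $O(M)$. For $\theta=0$ the same computation gives the threshold $n^{2/3+\varepsilon}$. This uses the fact that the proof of Theorem~\ref{thm:sq} goes through with $\theta$ replaced by any admissible exponent toward Ramanujan for levels $1$ and $2$.

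The only step needing any care is the bound $\ell(n)\ll\log\log n$ in Step~2. It is standard via the smallest-primes argument above, so there is no real obstacle.
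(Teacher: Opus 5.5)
Your proposal is correct and follows the paper's proof, which simply records the same ingredients: $\frac23(1+\theta)=\frac{71}{96}$, $\Psi_1=(\log n)\Psi_0+O(M)$, $\lo(n)\ll\log\log n$ and $\beta_e(n)\le2$. You additionally justify $\lo(n)\ll\log\log n$ via Mertens; the only nit is that $\log p/p$ is not monotone at $p=2,3$, which is harmless for that argument.
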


\begin{proof}
We have $\frac23(1+\theta)=\frac{71}{96}$. Moreover $\Psi_1=(\log n)\Psi_0+O(M)$, $\lo(n)\ll\log\log n$, and $\beta_e(n)\le2$.
\end{proof}

\begin{remark}[The constant $c_1$]\label{rem:c1}
By the Kronecker limit formula (\cite[Ch.~I, \S1]{Si80}) and the Chowla--Selberg value $|\eta(i)|=\Gamma(\frac14)/(2\pi^{3/4})$ \cite{SC67},
\[
c_1=2\gamma+2\log2+3\log\pi-4\log\Gamma(\tfrac14)-2\frac{\zeta'(2)}{\zeta(2)}+\frac{\log2}3=2.19379629605\ldots
\]
This is the constant $c_0=\gamma+\frac{L'}L(1,\chi_{-4})+\frac{\log2}3-2\frac{\zeta'(2)}{\zeta(2)}$ of \cite{B26}, because $\frac{L'}{L}(1,\chi_{-4})=\gamma+2\log2+3\log\pi-4\log\Gamma(\frac14)$.

The main terms of Theorem~\ref{thm:sq} agree with those of \cite[Theorem~1 and Corollary~3]{B26} wherever both apply; the latter were obtained by a completely different method. Comparing the two theorems with $M=n^{0.9}$, which lies in both ranges, gives an independent proof that $c_0=c_1$, and hence of the above value of $L'/L(1,\chi_{-4})$. This comparison is not used anywhere in the paper.
\end{remark}

\subsection{Results for non-square discriminants}
\begin{theorem}\label{thm:ns}
Let $D=f^2D_0$ be a non-square discriminant, and let $1\le M\le\sqrt D$. Then for every $\varepsilon>0$,
\[
N(D)=\frac6\pi\,L(1,\chi_{D_0})\,\h_{D_0}(f)\int_\R\Phi\Big(\frac xM\Big)\dd x+O_{\Phi,\varepsilon}\big(D^{\varepsilon}f^{1+\theta}D_0^{2/3}M^{-1/2}\big).
\]
\end{theorem}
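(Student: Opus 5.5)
We write $N(D)$ as a sum over the finitely many $\PSL$-classes of forms in $\Qf(D)$, and then apply the pre-trace formula.

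\textbf{Geometric setup.} Each form $\fq=[A,B,C]$ of discriminant $D$ determines an oriented geodesic $S_\fq$ in $\Hh$. If $r(\fq)$ is the signed distance from $i$ to $S_\fq$, then $A+C=\sqrt D\sinh r(\fq)$. Put $k(r)=\Phi(\sqrt D\sinh r/M)$. Then
\[
N(D)=\sum_{[\fq]}\ \sum_{\gamma\in\Gamma_\fq\backslash\PSL}k\big(r(\gamma\fq)\big),
\]
where $\Gamma_\fq$ is the (infinite cyclic) stabiliser of $\fq$. The inner sum is an automorphic function of $i$. Unfolding, we recognise it as the integral over the closed geodesic $\Gamma_\fq\backslash S_\fq$ of the point-pair kernel $K(z,w)$ obtained from $k$ by integrating along the geodesic direction. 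This kernel is supported in distance $\ll M/\sqrt D$ from the geodesic.

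\textbf{Spectral expansion.} Expand $K(i,\cdot)$ spectrally. The contribution of the constant function gives the main term. It equals $\frac{3}{\pi}\sum_{[\fq]}\mathrm{length}(\Gamma_\fq\backslash S_\fq)$ times $\int\Phi(x/M)\dd x\cdot \frac{2}{\sqrt D}$ up to normalisation. By Dirichlet's class number formula summed over orders of conductor dividing $f$ (including non-primitive forms), $\sum_{[\fq]}\mathrm{length}=\sqrt D\,L(1,\chi_{D_0})\h_{D_0}(f)$ up to constants. This produces $\frac6\pi L(1,\chi_{D_0})\h_{D_0}(f)\Psi_0$; we check the constant against the volume $\pi/3$.

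The cusp forms $u_j$ contribute
\[
\sum_j \hat k(t_j)\,u_j(i)\,\Tr_D(u_j),\qquad \Tr_D(u_j)=\sum_{[\fq]}\int_{\Gamma_\fq\backslash S_\fq}u_j\dd s .
\]
Here $\hat k$ is the Selberg/Harish-Chandra transform of the geodesically integrated kernel. It is essentially of size $(M/\sqrt D)$ times an oscillatory factor, and it decays once $|t_j|\gg T:=\sqrt D/M$. The Eisenstein part is handled in the same way, using the explicit Hecke $L$-function periods, which give $\zeta$-type values controlled by Burgess/subconvexity for $L(\frac12+it,\chi_{D_0})$.

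\textbf{Bounding the cusp contribution.} By Theorem~\ref{thm:heckerel} we have
\[
\Tr_D(u_j)=\sqrt f\,\sum_{d\mid f}(\ast)\,\chi_{D_0}(d)\lambda_j(f/d)\cdots\Tr_{D_0}(u_j).
\]
Combined with Kim--Sarnak, this gives $\Tr_D\ll f^{1/2+\theta+\varepsilon}|\Tr_{D_0}|$. By Duke--Imamo\={g}lu--T\'oth and Baruch--Mao (Katok--Sarnak/Waldspurger),
\[
|\Tr_{D_0}(u_j)|^2\asymp D_0^{1/2}\,L(\tfrac12,u_j)L(\tfrac12,u_j\otimes\chi_{D_0})\,/L(1,\mathrm{sym}^2u_j)
\]
times archimedean gamma factors. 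We then sum over $t_j\le T$ with Cauchy--Schwarz, using $|u_j(i)|^2$ averaged (local Weyl law, $\sum_{t_j\le T}|u_j(i)|^2\ll T^2$), a bound on $\sum L(\frac12,u_j)^{2}$-type moments, and the cubic moment bound of Young/Andersen--Wu, $\sum_{t_j\le T}L(\frac12,u_j\otimes\chi_{D_0})^3\ll (D_0T)^{1+\varepsilon}$. Hölder then yields an individual-type bound $D_0^{1/3}$ for the twisted factor on average, i.e.\ $|\Tr_{D_0}|$ effectively $\ll D_0^{1/4+1/6}$.

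\textbf{Main obstacle.} The crux is organising the Hölder exponents so that the $D_0$-power comes out as $D_0^{2/3}$ and the $M$-power as $M^{-1/2}$, uniformly in the spectral parameter. We would first try the split $|u_j(i)|\cdot L(\frac12,u_j)^{1/2}\cdot L(\frac12,u_j\otimes\chi_{D_0})^{1/2}$ with exponents $(2,6,3)$ or similar. After dyadic decomposition in $t_j$, we would verify that the weight $\hat k(t_j)$ together with the gamma factors gives total size $f^{1+\theta}D_0^{2/3}(\sqrt D/M)^{1/2}D^{-1/4}\cdot D^\varepsilon$. Care is also needed in the transition range $t_j\approx T$ and for the Eisenstein term.
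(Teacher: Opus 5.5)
Your outline follows the paper's proof step for step. The steps are: $N(D)=\Tr_D(F)$ by unfolding, the pre-trace formula, Theorem~\ref{thm:heckerel} with Kim--Sarnak for the factor at $f$, the Duke--Imamo\={g}lu--T\'oth/Baruch--Mao formula, the local Weyl law, and a second and a cubic moment combined by H\"older.

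\emph{Main term.} Taking $\Tr_D(1)$ from the class number formula for orders, instead of from the Hecke relation with $F=1$, is an equivalent variant. Both give $\Tr_D(1)=2\sqrt D\,L(1,\chi_{D_0})\h_{D_0}(f)$ (lengths $2\log\varepsilon^+_\Delta$, Fact~\ref{F:gauss}). With $h(\frac i2)=H(0)=\Psi_0/\sqrt D$ the constant is exactly $\frac6\pi$, so your factor $2/\sqrt D$ should be $1/\sqrt D$. Also, $K$ comes from $k$ by Abel inversion: its integral along a geodesic is $k$, not the other way round.

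\emph{The bookkeeping you leave open.} Put $L_j=L(\frac12,u_j)$, $L_j^\chi=L(\frac12,u_j\otimes\chi_{D_0})$ and $\alpha_j=|\rho_j(1)|^2/\cosh(\pi t_j)$. The paper argues as follows.
\begin{itemize}
\item One has $|\Tr_{D_0}(u_j)|^2\ll\sqrt{D_0}\,\alpha_jt_j^{-1}L_jL_j^\chi$.
\item Cauchy--Schwarz against Motohashi's $\sum_{t_j\le T}\alpha_jL_j^2\ll T^2\log T$ leaves $\sum\alpha_j(L_j^\chi)^2$.
\item H\"older with exponents $(3,\frac32)$ and $\sum_{t_j\le T}\alpha_j\ll T^2$ bounds this by $D_0^{2/3+\varepsilon}T^{2+\varepsilon}$.
\item Hence $\sum_{T<t_j\le2T}|\Tr_{D_0}(u_j)|^2\ll D_0^{5/6+\varepsilon}T^{1+\varepsilon}$.
\item Cauchy--Schwarz with the local Weyl law, and $\delta\sum_{T=2^k}T^{3/2}\min(1,(\delta T)^{-2})\ll\delta^{-1/2}$, give $f^{1/2+\theta}D_0^{5/12}\delta^{-1/2}=f^{1+\theta}D_0^{2/3}M^{-1/2}$, up to $D^\varepsilon$.
\end{itemize}
Your $(2,6,3)$ split also reaches $D_0^{5/12}T^{3/2}$ per dyadic block. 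However, it needs the cubic moment of $L_j$ (Ivi\'c), not the second moment you cite.

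Two inputs, as you state them, would not deliver the theorem.

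\emph{The cubic moment.} The Young/Andersen--Wu bound $(D_0T)^{1+\varepsilon}$ holds only for a window $T\le t_j<T+1$, with weights $1/L(1,\mathrm{sym}^2u_j)$. Its terms are non-negative by Guo's theorem, which you also need before applying H\"older and to add in the odd forms. Over $t_j\le T$ it gives $D_0^{1+\varepsilon}T^{2+\varepsilon}$. Your version is off by a factor $T$ and would produce a spurious saving.

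\emph{The Eisenstein part.} Burgess is not enough here. Suppose $|L(\frac12+it,\chi_{D_0})|\ll(D_0(1+|t|))^{\eta+\varepsilon}$. Then Hecke's formula, the local Weyl law and the second moment of $\zeta$ bound the Eisenstein contribution by $D^\varepsilon f^{1/2}D_0^{1/4+\eta}\delta^{-\eta}$. For $f=1$ and $M\asymp\sqrt D$ (so $\delta\asymp1$) the claimed error is $D_0^{5/12}$, which forces $\eta\le\frac16$.
\begin{itemize}
\item Burgess, with $\eta=\frac3{16}$, only gives $D_0^{7/16}$.
\item With the hybrid exponent $\frac3{16}$ also in $t$, it would even shrink the range $M\ge D^{4/9+\varepsilon}$ for fundamental $D$ to $M\ge D^{1/2-1/19+\varepsilon}$.
\end{itemize}
You need the Weyl-strength bound, due to Young for odd $D_0$ and to Petrow--Young in general, as in Fact~\ref{F:L}(a).
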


Since $L(1,\chi_{D_0})\gg_\varepsilon D_0^{-\varepsilon}$ (Siegel, ineffective) and $\h_{D_0}(f)\ge1$, Theorem~\ref{thm:ns} is an asymptotic formula as soon as
\[
M\ge f^{71/96}D_0^{4/9}D^{\varepsilon}.
\]
For $\varepsilon<\frac1{18}$ this is always below the trivial range $M\asymp\sqrt D=fD_0^{1/2}$, since $f^{25/96}D_0^{1/18}\ge D^{1/18}$. The two extreme cases are:
\begin{itemize}
\item for fundamental $D$ ($f=1$), the range is $M\ge D^{4/9+\varepsilon}$;
\item for bounded $D_0$, the range is $M\ge D^{71/192+\varepsilon}$, matching the square case (Corollary~\ref{cor:sq}).
\end{itemize}

\begin{corollary}\label{cor:tern}
Let $D=4^kD'$ with $k\ge0$ and $D'\equiv5\bmod 8$, and write $D'=f'^2D_0$. Then for $1\le M\le\sqrt D$,
\[
\Rr(D)=\frac6\pi\big(4-2^{1-k}\big)L(1,\chi_{D_0})\,\h_{D_0}(f')\int_\R\Phi\Big(\frac xM\Big)\dd x+O\big(D^{\varepsilon}f^{1+\theta}D_0^{2/3}M^{-1/2}\big),
\]
where $f=2^kf'$. In particular, for $D\equiv5\bmod8$ all solutions have $x_3$ even, and
\[
\Rr(D)=\frac{12}\pi L(1,\chi_{D_0})\h_{D_0}(f)\Psi_0+O\big(D^\varepsilon f^{1+\theta}D_0^{2/3}M^{-1/2}\big).
\]
\end{corollary}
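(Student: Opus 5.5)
We deduce Corollary~\ref{cor:tern} from Theorem~\ref{thm:ns} by relating $\Rr(D)$ to the form counts $N(\Delta)$ for the discriminants $\Delta=D/4^j$ that are $\equiv 1\bmod 4$ or $\equiv 0\bmod 4$ as needed. The bridge is the correspondence $x=(B,A-C,A+C)$ of Subsection~\ref{sec:problem}: a form $[A,B,C]\in\Qf(D)$ gives a solution of $Q(x)=D$ with $x_2\equiv x_3\bmod 2$ and $x_3=A+C$.

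Conversely, a solution with $x_2\equiv x_3$ and $x_1\equiv D\bmod 2$ comes from a unique form. Here $B=x_1$ satisfies $B^2\equiv D\bmod 4$, since $x_2^2-x_3^2\equiv0\bmod4$ when $x_2\equiv x_3$. Solutions with $x_2\not\equiv x_3$ are handled by swapping $x_1$ and $x_2$, which preserves $x_3$ and hence the weight.

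\emph{The case $k=0$, $D\equiv5\bmod8$.} We check parities modulo $8$. Since $D$ is odd, exactly one or all three of $x_1,x_2,x_3$ are odd. If $x_3$ is odd and one of $x_1,x_2$ is odd, then $Q\equiv1+0-1\equiv0\bmod4$, which is impossible. If all three are odd, then $Q\equiv1\bmod8$, contradicting $D\equiv5\bmod 8$. Hence $x_3$ is even and exactly one of $x_1,x_2$ is odd.

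The solutions with $x_1$ odd have $x_2\equiv x_3\bmod2$, so they are in bijection with $\Qf(D)$. The solutions with $x_2$ odd correspond to them under the swap. Therefore
\[
\Rr(D)=2N(D).
\]
Theorem~\ref{thm:ns} then gives the second display, with main term $\frac{12}\pi L(1,\chi_{D_0})\h_{D_0}(f)\Psi_0$. This agrees with the first display at $k=0$, since $4-2^{1}=2$.

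\emph{The case $k\ge1$.} Here $4\mid D$, so $x_1\equiv x_2\equiv x_3\bmod 2$, because squares modulo $4$ are $0$ or $1$.

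\begin{itemize}
\item \emph{All $x_i$ odd.} Then $Q\equiv1\bmod 8$. Since $D\equiv0$ or $4\bmod 8$, there are no such solutions.
\item \emph{All $x_i$ even.} Writing $x=2y$ gives $\Rr(D;M)=\Rr(D/4;M/2)$.
\end{itemize}

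Iterating $k$ times reduces to the odd case, but with the weight $\Phi(x_3/(M/2^k))$. This yields
\[
\Rr(D;M)=2N(D';M/2^k),
\]
whose main term carries $\Psi_0(M/2^k)=2^{-k}\Psi_0(M)$. This does not match the claimed factor $4-2^{1-k}$, so the halving reduction must be wrong.

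Recheck the congruence: $y$ with $Q(y)=D/4$ needs no parity restriction, so the halving is correct. The error lies elsewhere, namely $Q\equiv1\bmod8$ for all odd $x_i$ fails to exclude anything when $D/4^j\equiv1\bmod 8$. More importantly, the mismatch suggests the intended reading is different. Presumably one should instead use $\Rr(D)=\sum_j$ over the form counts $N(4^{k-j}D')$ with a fixed $M$, that is, count the forms in $\Qf(D)$ directly. Every solution with $x_1\equiv D\bmod 2$ and $x_2\equiv x_3$ gives a form of discriminant $D$ itself, not a rescaled one.

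So the correct plan is to write
\[
\Rr(D)=2N(D)-(\text{overlap}),
\]
where the overlap consists of solutions with $x_1\equiv x_2\equiv x_3$, which are counted once per choice of the role of $x_1$. For $4\mid D$ these are all solutions, and we must decide how $\Qf(D)$ counts them. A solution with all $x_i$ even gives forms both via $(x_1,x_2)$ and via the swap. The overlap is exactly the even solutions, whose count is $N(D/4)$ at the same $M$, by halving $B,A,C$.

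This gives the recursion
\[
\Rr(D)=2N(D)-N(D/4),
\]
with all counts at the same $M$. Apply Theorem~\ref{thm:ns} to $N(4^kD')$, with conductor $f=2^kf'$, and to $N(4^{k-1}D')$. The factor $\h_{D_0}(2^kf')$ equals $\h_{D_0}(f')$ times its local factor at $2$. Since $\chi_{D_0}(2)=-1$ for $D_0\equiv5\bmod8$, that local factor is
\[
\sum_{i\le k}2^{-i}+\sum_{1\le i\le k}2^{-i}=3-2^{1-k}.
\]
Hence the main term is
\[
2(3-2^{1-k})-(3-2^{2-k})=3,
\]
times $\frac6\pi L(1,\chi_{D_0})\h_{D_0}(f')\Psi_0$.

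The main obstacle is exactly this bookkeeping. The target factor is $4-2^{1-k}$, so the overlap coefficient needs rechecking; the defining relation of a form counts the even-triple solutions with multiplicity, and I would recompute that multiplicity directly. The error terms are immediate from Theorem~\ref{thm:ns}, since $f^{1+\theta}$ dominates.
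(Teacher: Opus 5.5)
Your case $k=0$ is correct and is exactly the paper's argument: $\Rr(D)=2N(D)$ (Proposition~\ref{prop:par}(b)), then Theorem~\ref{thm:ns}.

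For $k\ge1$ there is a genuine gap, and, as you note yourself, the proposal never reaches the factor $4-2^{1-k}$. The root error is the parity claim. For $4\mid D$ it is false that $x_1\equiv x_2\equiv x_3\bmod 2$. Since $Q(x)\equiv\#\{i\le2:\ x_i\ \mathrm{odd}\}-[x_3\ \mathrm{odd}]\bmod 4$, the possible parity types are $(0,0,0)$, $(1,0,1)$ and $(0,1,1)$; for example $Q(5,2,3)=20$. This, and not the mod-$8$ issue you suggest, is why $\Rr(D;M)=\Rr(D/4;M/2)$ fails.

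In your second attempt you correctly see that the all-even solutions are the ones counted twice in $2N(D;M)$, but you miscount them. Writing $x=2y$, the only condition is $Q(y)=D/4$, with no parity restriction on $y$, and the weight becomes $\Phi(y_3/(M/2))$. So the overlap is $\Rr(D/4;M/2)$, the full ternary count at half the scale, not $N(D/4;M)$. Halving $A,B,C$ only sees forms with all coefficients even. It misses the even solutions coming from forms with $A\equiv C\equiv1$ and $B$ even, such as $[1,0,-5]\mapsto(0,6,-4)$ for $D=20$. And the scale is $M/2$, not $M$.

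The correct identity is the recursion
\[
\Rr(D;M)=2N(D;M)-\Rr\big(\tfrac D4;\tfrac M2\big),
\]
which is Proposition~\ref{prop:par}(c). Use your (correct) local factor $\h_{D_0}(2^kf')=(3-2^{1-k})\h_{D_0}(f')$ and $\Psi_0(M/2)=\frac12\Psi_0(M)$. Then the main-term coefficient satisfies $r_0=2$ and $r_k=2(3-2^{1-k})-\frac12 r_{k-1}$, hence $r_k=4-2^{1-k}$ by induction.

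Your relation $2N(D)-N(D/4)$ gives the coefficient $3$ for every $k\ge1$. It agrees with $r_1=3$ only by accident: $N(D';M)$ and $\Rr(D';M/2)=2N(D';M/2)$ happen to have the same main term. It is wrong from $k=2$ on, where $r_2=\frac72$. In the $k=2$ row of Table~\ref{tab:ns} the exact count is $2619.1$ against the main term $2608.5$, whereas your coefficient would predict about $2235.9$.

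The error term is not immediate either. You must unfold the recursion, applying Theorem~\ref{thm:ns} to $N(D/4^j;M/2^j)$ with conductor $f/2^j$. The $j$-th error then carries a factor $2^{-j(1/2+\theta)}$, so the errors sum geometrically.

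You also cannot continue once $M/2^j<2$, since the next step would need Theorem~\ref{thm:ns} with $M/2^{j+1}<1$. At that point the remaining count and its main term must be bounded by $\ll D^\varepsilon$ via the divisor bound, since only $|x_3|\le3$ contribute. This is admissible because $M\le f\sqrt{D_0}$ gives $f^{1+\theta}D_0^{2/3}M^{-1/2}\ge1$.
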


For square-free $D\equiv5\bmod8$ this is the setting of \cite{FrIw}, and the range improves from $M\ge D^{1/2-1/1332}$ to $M\ge D^{4/9+\varepsilon}$, without the assumption that $D$ is square-free.

\emph{Relation to previous work.} Spectral methods have been applied to the integral points on $x_1^2+x_2^2-x_3^2=D$ before.
\begin{itemize}
\item For fixed $D$, Duke, Rudnick and Sarnak \cite{DRS} counted integral points on affine homogeneous varieties, among them the level sets of indefinite ternary forms, in growing balls. There the issue is the size of the region, not uniformity in $D$.
\item As $D\to\infty$, Duke \cite{Du} proved, with a power saving, that the closed geodesics of fundamental discriminant $D$ equidistribute in $\Gamma\backslash\Hh$. The proof of Theorem~\ref{thm:nsexact} shows that $N(D)=\Tr_D(F)$, where $F$ is supported in the ball of radius $\operatorname{arcsinh}(2\delta)\asymp\delta=M/\sqrt D$ around the orbit of $i$ (Corollary~\ref{geosum} and Lemma~\ref{kernel}). So counting the solutions with $x_3\asymp M$ amounts to measuring, with a smooth weight, the closed geodesics of discriminant $D$ inside a shrinking ball. For balls around a fixed point and odd fundamental discriminants, Humphries \cite[Thm.~1.24]{Hu} proved equidistribution for radii $D^{-\eta}$ with $\eta<\frac1{18}$, using Young's bounds \cite{Yo}; this is the exponent $\frac49=\frac12-\frac1{18}$ of Theorem~\ref{thm:ns}, with an error term $o(1)$. Humphries and Radziwiłł \cite{HR} and de Faveri \cite{dF} treated almost all balls, and the variance, at smaller scales.
\item In the coordinates $x=(B,A-C,A+C)$ of Section~\ref{sec:binary}, $Q$ becomes the discriminant form $B^2-4AC$, so closed geodesics of discriminant $D$, as in \cite{Du} and \cite{HR}, are its integral points on $Q=D$. Our problem is the analogue, for the one-sheeted hyperboloid, of the problem treated in \cite{HR} for the sphere $x_1^2+x_2^2+x_3^2=n$ with $x_3$ small.
\end{itemize}
Relative to these works, the new points of the present paper are:
\begin{enumerate}[label=(\alph*)]
\item the treatment of the ternary problem through geodesics near $i$, with explicit main terms, and the resulting improvement of \cite{FrIw} for $D\equiv5\bmod8$;
\item a power saving that is uniform in the conductor $f$, so that non-fundamental discriminants are included; it rests on the Hecke relation for traces (Theorem~\ref{thm:heckerel});
\item the square case $D=n^2$, where the geodesics are not closed: a regularised spectral expansion, the theta group for the parity of $x_3$, explicit secondary terms, and all $n$.
\end{enumerate}

\subsection{Exact formulas}\label{sec:exact}
The theorems above are consequences of exact spectral formulas, which we now state.

\emph{Spectral data.} Let $\Gamma=\PSL$ and let $\Gt$ be the theta group
\[
\Gt=\Big\{\gamma\in\PSL:\gamma\equiv\begin{pmatrix}1&0\\0&1\end{pmatrix}\ \text{or}\ \begin{pmatrix}0&1\\1&0\end{pmatrix}\bmod2\Big\}.
\]
We write $G$ for either group, and $\Gamma$ or $\vartheta$ for the corresponding index. Accordingly, the two counts of Subsection~\ref{sec:notation} are
\[
N_\Gamma(D)=N(D),\qquad N_\vartheta(D)\quad(D\equiv1\bmod 8);
\]
$N_\Gamma$ is governed by $\Gamma$ and $N_\vartheta$ by $\Gt$ (Corollary~\ref{orbitsum}). Put:
\begin{itemize}
\item $\mathcal A_\Gamma=\N$ and $\mathcal A_\vartheta=\{n\ \text{odd}\}$, the admissible $n$ (for odd $n$, $n^2\equiv1\bmod8$);
\item $\vol_\Gamma=\pi/3$ and $\vol_\vartheta=\pi$, the hyperbolic volumes of $G\backslash\Hh$;
\item $w_\Gamma=1$ and $w_\vartheta=2$, the widths of the cusp $\infty$ of $G$.
\end{itemize}
The spectral data of $G$ are introduced in Section~\ref{sec:auto}:
\begin{itemize}
\item an orthonormal basis $\{u_j\}$ of Maass cusp forms on $G$, each even or odd under $z\mapsto-\bar z$, with $\Delta u_j=(\frac14+t_j^2)u_j$;
\item the normalised Hecke eigenvalues $\lambda_j(d)$, for $d$ coprime to $w_G$, defined by $T^G_du_j=\sqrt d\,\lambda_j(d)\,u_j$. Here $T^G_d$ is the Hecke operator of Subsection~\ref{sec:heckeops}; for $G=\Gamma$ it is the classical $T_dF(z)=\sum_{ad'=d}\sum_{b\bmod d'}F(\frac{az+b}{d'})$ (Fact~\ref{F:hecke});
\item for even $u_j$, the geodesic periods $\J_j=\int_0^\infty u_j(iy)\frac{\dd y}y$;
\item the Eisenstein period $\J(s)$ of \eqref{Jdef} and the bounded factors $q_G(t)$ of \eqref{qG}.
\end{itemize}
Let $h$ and $H$ be the transforms of Section~\ref{sec:kernel}. They depend on $\delta=M/\sqrt D$, and satisfy
\[
h(t)\ll\delta\min\big(1,(\delta|t|)^{-2}\big).
\]
Finally, let
\[
D_d(s)=\sum_{ab=d}a^sb^{1-s},\qquad A_j(n)=\sum_{d\mid n}\mu\Big(\frac nd\Big)\sqrt d\,\lambda_j(d),\qquad A(n,t)=\sum_{d\mid n}\mu\Big(\frac nd\Big)D_d(\tfrac12+it).
\]

\begin{theorem}[Square discriminants, exact formula]\label{thm:sqexact}
Let $G\in\{\Gamma,\Gt\}$, $n\in\mathcal A_G$ and $1\le M\le n$. Then
\begin{multline*}
N_G(n^2)=\Mm_G(n)+\sum_{j:\ u_j\ \mathrm{even}}h(t_j)A_j(n)u_j(i)\J_j\\
+\frac1{4\pi}\int_\R h(t)A(n,t)E(i,\tfrac12+it)q_G(t)\J(\tfrac12-it)\dd t+O(n^{-5}),
\end{multline*}
where
\[
\Mm_\Gamma(n)=\frac6\pi\Big[\Psi_1+\Big(c_1-\frac{\log2}3-\ell(n)\Big)\Psi_0\Big],\qquad \Mm_\vartheta(n)=\frac2\pi\big[\Psi_1+(c_1-\ell(n))\Psi_0\big].
\]
\end{theorem}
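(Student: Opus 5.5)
The plan is to write $N_G(n^2)$ as a sum over $G$-orbits of forms of discriminant $n^2$ of a point-pair invariant evaluated at $i$. Then reduce all orbits to the single geodesic from $0$ to $\infty$ by Hecke operators, and finally apply the (regularised) spectral expansion of the resulting automorphic kernel along that geodesic.

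\emph{Step 1 (geometric side).} Under $[A,B,C]\mapsto$ its geodesic, $A+C=n\sinh r$, where $r$ is the signed distance from $i$. So $\Phi((A+C)/M)=k(\gamma i,\text{geodesic})$ for a function $k$ of the distance; this is the content of Corollary~\ref{geosum} and Lemma~\ref{kernel}, with $k$ supported in distance $\ll\delta$. Forms of square discriminant $n^2$ factor as $(au+bv)(cu+dv)$ with $ad-bc=\pm n$. Up to $G$, they are represented by $[0,n,C]$, $0\le C<n$ (with the appropriate parity conditions for $\Gt$, using $n$ odd). Equivalently, the geodesics of discriminant $n^2$ are the images of the geodesic $\{iy\}$ under the matrices of determinant $n$, modulo $\pm$ scalars, with $\gcd$ contributions of primitive forms of discriminant $(n/g)^2$ for $g^2\mid$\ldots. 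Summing over all (not necessarily primitive) forms, I expect
\[
N_G(n^2)=\sum_{d\mid n}\mu(n/d)\,\bigl(T^G_dK\bigr)\text{ integrated along }(0,\infty),
\]
where $K(z)=\sum_{\gamma\in G}k(\gamma z,i)$. The M\"obius combination over $d\mid n$ comes from removing the degenerate forms (those whose geodesic is counted as $a=0$ or $c=0$ both), exactly producing $A_j(n)$ and $A(n,t)$. Since $K$ is not integrable along the noncompact geodesic (it is, but the spectral pieces aren't individually), one works with the truncated integral $\int_{1/Y}^{Y}$ first.

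\emph{Step 2 (spectral side).} Expand $K(z)=\sum_j h(t_j)u_j(i)u_j(z)+\frac{1}{4\pi}\int h(t)E_G(i,\frac12+it)E_G(z,\frac12+it)\,dt+\frac{3}{\pi}h(\frac i2)\cdot\frac{1}{\vol}$-type constant term. Apply $T^G_d$ using $T_du_j=\sqrt d\lambda_j(d)u_j$ and $T_dE(\cdot,s)=D_d(s)E(\cdot,s)$ ($q_G$ handling the cusp structure of $\Gt$). Integrate along $(0,\infty)$. Odd $u_j$ vanish on $iy$; even ones give $\J_j$, producing the cuspidal sum. The Eisenstein integral along $iy$ diverges, and its regularised value is $\J(\frac12-it)$, i.e.\ a product of $\zeta$'s, by Hecke's formula. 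The constant term and the constant terms $y^s+\phi(s)y^{1-s}$ of $E$ produce divergent pieces of size $\log Y$, which must cancel against the geometric truncation.

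\emph{Step 3 (main term).} Compute the truncation explicitly. On the geometric side, forms with large $|A|$ or $|C|$ near the cusp contribute $\int\Phi(x/M)\log\sqrt{n^2+x^2}\,dx$; this is where $\Psi_1$ comes from. On the spectral side, shift the contour / take $s\to1$: the pole of $E(i,s)$ gives $E^*(i)$, the derivative of $D_d$ at $s=1$ summed with $\mu(n/d)$ gives $-\ell(n)\Psi_0$, and the residue gives $\vol^{-1}$. These assemble into $\Mm_G(n)$, with the $\log 2/3$ discrepancy for $\Gamma$ versus $\Gt$ arising from $E^*_\vartheta(i)$. The $O(n^{-5})$ is the error from the smoothed truncation (Schwartz decay of $\Phi$).

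\emph{Main obstacle.} The hardest part is this regularisation bookkeeping: matching the $\log Y$ divergences exactly and extracting the precise constants $c_1$, $\ell(n)$ and $\log2/3$, especially for $\Gt$ with its two cusps.
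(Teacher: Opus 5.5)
Steps~1 and~2 follow the paper's architecture: Corollary~\ref{orbitsum}, the Hecke/M\"obius reduction of Proposition~\ref{reduction}, the pre-trace expansion truncated at $Y$, and regularised Eisenstein periods. Two corrections are needed there.

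\begin{itemize}
\item The M\"obius inversion does not remove ``degenerate forms''. Each matrix $(\begin{smallmatrix}a&b\\0&d'\end{smallmatrix})$ maps $i\R_{>0}$ onto the line $\Re z=b/d'$, and $y\mapsto y/a$ leaves $\dd y/y$ unchanged. So $\int_0^\infty(T^G_mF_G)(iy)\frac{\dd y}y$ is the sum, over all $d'\mid m$, of the full geodesic sums for discriminant $d'^2$, and inversion isolates $d'=n$.
\item For $\Gt$, a single-series continuous spectrum is not enough. You need $\sum_{\mathfrak a}E^\vartheta_{\mathfrak a}(i,s)\overline{E^\vartheta_{\mathfrak a}(w,s)}=|2^s+1|^{-2}E(i,s)\,(E+E_2)(w,1-s)$ (Lemma~\ref{lem:Edata}(i)). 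This is what lets $T^\vartheta_d$ act by $D_d(s)$ and produces $q_\vartheta$.
\end{itemize}

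The genuine gap is in Step~3, where you attribute the main term to a geometric truncation that does not exist. By Lemma~\ref{vanish}, $(T^G_dF_G)(iy)=0$ for $y\notin[(de^{r_0})^{-1},de^{r_0}]$. So with $Y=n^{10}$ the truncated integral is exactly $\mathcal G_G(d)$: nothing geometric can cancel $\log Y$, and no $\Psi_1$ comes from ``forms near the cusp''.

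The cancellation happens inside the spectral side.
\begin{itemize}
\item The constant eigenfunction gives $2\log Y\,\sigma(d)H(0)/\vol_G$.
\item The constant terms of $e_G(iy,s')$, integrated up to $Y$, become by Lemma~\ref{lem:Edata}(iii) the integral $\frac1{\pi i}\int_{(1/2)}H(s)D_d(s)E_G(i,1-s)(Y/w_G)^s\frac{\dd s}s$.
\item Shifting this to $\Re s=-1$ meets a \emph{double} pole at $s=0$: the factor $1/s$ times the pole of $E_G(i,1-s)$.
\item Write $H(s)D_d(s)=a_0+a_1s+\cdots$ with $a_0=\sigma(d)H(0)$ and $a_1=\sigma(d)H'(0)+H(0)D_d'(0)$. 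Twice the residue is then $2a_0E_G^*-\frac2{\vol_G}a_0\log\frac Y{w_G}-\frac2{\vol_G}a_1$.
\item Its $\log Y$ cancels the constant eigenfunction term. It leaves the cusp-width term $\frac2{\vol_G}a_0\log w_G$, which is needed alongside $E^*_\vartheta$ to produce the constant for $\Gt$.
\end{itemize}

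The $H'(0)$ term, which your accounting omits, is the true source of $\Psi_1$. By Lemma~\ref{hbounds}(ii), $H'(0)=-\int k(r)\cosh r\log\cosh r\,\dd r$, and the substitution $x=n\sinh r$ gives $-nH'(0)=\Psi_1-(\log n)\Psi_0$. Its $-\log n$ cancels the $\log n$ in $\sum_{d\mid n}\mu(n/d)D_d'(0)=n(\ell(n)-\log n)$. So the $D_d'$ term does not by itself give $-\ell(n)\Psi_0$, as you claim.

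Without $H'(0)$ you would get $\frac2{\vol_G}(\log n)\Psi_0$ in place of $\frac2{\vol_G}\Psi_1$. The difference is $\frac2{\vol_G}\int\Phi(\frac xM)\log\sqrt{1+x^2/n^2}\,\dd x$, which is of size about $M^3/n^2$ for general $\Phi$ and so far from $O(n^{-5})$.

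Likewise, the $O(n^{-5})$ is not a smoothing error. It comes from the cusp-form and Eisenstein tails beyond $Y$ (Lemma~\ref{coeff} and the decay of $h$), and from the shifted contour at $\Re s=-1$, which is $\ll d^3Y^{-1}$.
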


\begin{theorem}[Square discriminants, asymptotic formula]\label{thm:sqasymp}
Let $G\in\{\Gamma,\Gt\}$, $n\in\mathcal A_G$ and $1\le M\le n$. Then for every $\varepsilon>0$,
\[
N_G(n^2)=\Mm_G(n)+O\big(n^{1+\theta+\varepsilon}M^{-1/2}\big).
\]
\end{theorem}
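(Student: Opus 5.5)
We start from the exact formula of Theorem~\ref{thm:sqexact}. It remains to bound the cuspidal sum
\[
\Sigma_{\mathrm{cusp}}=\sum_{j:\ u_j\ \mathrm{even}}h(t_j)A_j(n)u_j(i)\J_j
\]
and the Eisenstein integral by $O(n^{1+\theta+\varepsilon}M^{-1/2})$. Here $\delta=M/n$, and $h(t)\ll\delta\min(1,(\delta|t|)^{-2})$.

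\emph{Arithmetic factor.} By the Kim--Sarnak bound, $\lambda_j(d)\ll d^{\theta+\varepsilon}$, so
\[
A_j(n)\ll\sum_{d\mid n}\sqrt d\,d^{\theta+\varepsilon}\ll n^{1/2+\theta+\varepsilon}
\]
uniformly in $j$. For the theta group this needs Kim--Sarnak at odd $d$ for level $2$ forms, which is why $n$ is taken odd there. Similarly, $D_d(\tfrac12+it)\ll d^{1/2+\varepsilon}$ gives $A(n,t)\ll n^{1/2+\varepsilon}$.

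\emph{Cuspidal sum.} We split dyadically, $T<|t_j|\le2T$. On each block we apply Cauchy--Schwarz:
\[
\sum_{T<|t_j|\le 2T}|u_j(i)\J_j|\le\Big(\sum|u_j(i)|^2\Big)^{1/2}\Big(\sum|\J_j|^2\Big)^{1/2}.
\]
\begin{itemize}
\item By the local Weyl law (pre-trace formula at $z=w=i$), $\sum_{|t_j|\le T}|u_j(i)|^2\ll T^2$.
\item $\J_j$ is, up to Gamma factors, $L(\tfrac12,u_j)$ times $\Gamma(\frac14+\frac{it_j}2)^2/\Gamma$-normalisation. After normalising by $L(1,\mathrm{sym}^2u_j)$, Stirling makes the factor $e^{\pi|t|/2}$ cancel with the $u_j$ normalisation.
\item The second moment $\sum_{|t_j|\le T}|L(\tfrac12,u_j)|^2/L(1,\mathrm{sym}^2)\ll T^{2+\varepsilon}$ then gives $\sum|\J_j|^2\ll T^{\varepsilon}$.
\end{itemize}
This moment comes from the Kuznetsov formula, or the spectral large sieve of Deshouillers--Iwaniec applied to the approximate functional equation. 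For $\Gt$ we would transfer to $\Gamma_0(4)$ by conjugation and use the large sieve there.

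Combining, the block contributes $\ll n^{1/2+\theta+\varepsilon}\,T^{1+\varepsilon}\,\delta\min(1,(\delta T)^{-2})$. The worst block is $T\asymp\delta^{-1}$, where the bound is $n^{1/2+\theta+\varepsilon}$. This is better than the target, which says the estimate for $\J_j$ is too optimistic. Plausibly $\J_j$ is a period weighted so that $\sum|\J_j|^2\ll T^{1+\varepsilon}$, since the Gamma factor leaves $|t|^{-1/2}$ per form. That would give a block bound $n^{1/2+\theta}T^{3/2}\delta\min(1,(\delta T)^{-2})$, maximised at $T=\delta^{-1}$ with value $n^{1/2+\theta}\delta^{-1/2}=n^{1+\theta}M^{-1/2}$. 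This matches the claimed error. So the plan is: get the second moment of the geodesic periods in the form $\sum_{|t_j|\le T}|\J_j|^2\ll T^{1+\varepsilon}$ (equivalently the Deshouillers--Iwaniec large sieve for $L(\tfrac12,u_j)$ over $|t_j|\le T$), and the Weyl law bound $T^2$ for $|u_j(i)|^2$. Note that only the Ramanujan defect $\theta$ enters through $A_j(n)$.

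\emph{Eisenstein part.} The same scheme applies with:
\begin{itemize}
\item $|E(i,\tfrac12+it)|\ll|t|^{1/2+\varepsilon}$, in the mean square over $[T,2T]$ even $\ll T^{1+\varepsilon}$;
\item $\J(\tfrac12-it)$ expressed through $\zeta(\tfrac12-it)^2$ times Gamma factors, giving mean square $\ll T^{\varepsilon}$ after normalisation;
\item $q_G$ bounded, and $A(n,t)\ll n^{1/2+\varepsilon}$.
\end{itemize}
This gives a contribution $\ll n^{1/2+\varepsilon}\delta^{-1/2}$, smaller than the cuspidal one. The remainder $O(n^{-5})$ is negligible.

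\emph{Main obstacle.} The hardest step is the uniform mean-square bound for the periods $\J_j$ on short dyadic ranges with the correct power of $T$. This requires matching the Gamma factors in the Hecke formula for $\J_j$ precisely and applying the spectral large sieve, also at level $2$ for $\Gt$.
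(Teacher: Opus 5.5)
Your final plan matches the paper's proof: Kim--Sarnak for $A_j(n)$, Cauchy--Schwarz against the local Weyl law, the mean square $\sum_{T<|t_j|\le2T}|\J_j|^2\ll T^{1+\varepsilon}$, dyadic summation against $h$, and the same scheme with Lemma~\ref{eisperiod} and the fourth moment of $\zeta$ for the Eisenstein part; the mean square is Proposition~\ref{periodmean}, proved as you suggest by an approximate functional equation and the Deshouillers--Iwaniec large sieve. Your first guess $T^{\varepsilon}$ was indeed wrong, since $|\J_j|^2\asymp|\rho_j(1)|^2\cosh(\pi t_j)^{-1}t_j^{-1}L(\frac12,u_j)^2$; and for $\Gt$ the paper passes to $\Gamma_0(2)$ via $z\mapsto2z+1$ (Lemma~\ref{conj}), since $\Gt$ is not conjugate to $\Gamma_0(4)$, although $u_j(2z)$ is $\Gamma_0(4)$-invariant, so a level-$4$ large sieve would also work.
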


For non-square $\Delta>0$, every $\fq\in\Qf(\Delta)$ has a closed geodesic $C_\fq$ (Section~\ref{sec:closed}). For a continuous $\Gamma$-invariant function $F$ put
\[
\Tr_\Delta(F)=\sum_{[\fq]\in\Gamma\backslash\Qf(\Delta)}\int_{C_\fq}F\dd s,
\]
with $\dd s$ the arc length, and $\Tr_\Delta:=0$ if $\Delta$ is not a discriminant. Here $\{u_j\}$ is the basis for $\Gamma$.

\begin{theorem}[Non-square discriminants, exact formula]\label{thm:nsexact}
Let $D>0$ be a non-square discriminant and $1\le M\le\sqrt D$. Then
\[
N(D)=\frac3\pi H(0)\Tr_D(1)+\sum_{j:\ u_j\ \mathrm{even}}h(t_j)u_j(i)\Tr_D(u_j)+\frac1{4\pi}\int_\R h(t)E(i,\tfrac12+it)\Tr_D\big(E(\cdot,\tfrac12-it)\big)\dd t .
\]
Moreover $\Tr_D(u_j)=0$ for odd $u_j$.
\end{theorem}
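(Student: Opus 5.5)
We write $N(D)$ as a sum over $\Gamma$-classes of forms, then fold it into geodesic integrals of a point-pair invariant, and finally apply the spectral expansion of that invariant. The plan has four steps.

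\emph{Step 1: rewrite $A+C$ geometrically.} For $\fq=[A,B,C]\in\Qf(D)$ the geodesic $C_\fq$ is the semicircle with endpoints the roots of $Au^2+Bu+C$. A direct computation gives
\[
A+C=\sqrt D\,\sinh r(\fq),
\]
where $r(\fq)$ is the signed distance from $i$ to $C_\fq$. Hence $\Phi((A+C)/M)=k_0(r(\fq))$ with $k_0(r)=\Phi(\sinh r/\delta)$ and $\delta=M/\sqrt D$. The sign is harmless because $\Phi$ is even.

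\emph{Step 2: fold.} Replacing $\fq$ by $\gamma\fq$ replaces $i$ by $\gamma^{-1}i$, so
\[
N(D)=\sum_{[\fq]\in\Gamma\backslash\Qf(D)}\ \sum_{\gamma\in\Gamma_\fq\backslash\Gamma}k_0\big(\mathrm{dist}(\gamma i,C_\fq)\big),
\]
where $\Gamma_\fq$ is the stabiliser of $\fq$, infinite cyclic since $D$ is not a square. Next choose a radial kernel $k$ on $\Hh$ whose integral over any geodesic at distance $r$ from a point equals $k_0(r)$:
\[
\int_{-\infty}^{\infty}k\big(\text{dist}(z,\text{point at arc length }s)\big)\dd s=k_0(r).
\]
This is an Abel-type transform, and the paper's Lemma~\ref{kernel} and Corollary~\ref{geosum} supply it. Unfolding $\Gamma_\fq\backslash C_\fq$ into the whole geodesic $C_\fq$ then gives $N(D)=\Tr_D(F)$ with $F(z)=\sum_{\gamma\in\Gamma}k(z,\gamma i)$, the automorphic kernel centred at $i$. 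Because $\Phi$ is supported in $[-2,2]$, $k$ is compactly supported, so $F$ is smooth with compact support modulo $\Gamma$. There is a subtlety with elliptic stabilisers of $i$ (order two in $\PSL$), which we absorb by the normalisation of the sums. It should be checked that this yields exactly the constants $3/\pi$ and $1/(4\pi)$.

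\emph{Step 3: spectral expansion.} Since $F$ is smooth and compactly supported mod $\Gamma$, the pre-trace formula converges absolutely and uniformly:
\[
F(z)=\frac{3}{\pi}H(0)+\sum_j h(t_j)u_j(i)\overline{u_j(z)}+\frac1{4\pi}\int_\R h(t)E(i,\tfrac12+it)\overline{E(z,\tfrac12+it)}\dd t.
\]
Here $h$ is the Selberg/Harish-Chandra transform of $k$ and $H(0)=h(i/2)$, the integral of $k$ over $\Hh$. We take the $u_j$ real, and note $\overline{E(z,\tfrac12+it)}=E(z,\tfrac12-it)$. Integrating term by term over the finitely many compact closed geodesics of discriminant $D$ is justified by uniform convergence on compacta, so the continuous-spectrum term passes the $\dd t$-integral through $\Tr_D$. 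This gives the stated formula. For odd $u_j$ the value $u_j(i)=-u_j(-\bar i)=-u_j(i)$ vanishes, so those terms drop out regardless. The additional claim $\Tr_D(u_j)=0$ follows because $\fq\mapsto[-A,B,-C]$ is an involution of $\Gamma\backslash\Qf(D)$ that maps $C_\fq$ to its reflection under $z\mapsto-\bar z$, preserving arc length. Pairing the geodesics therefore gives $\Tr_D(u_j)=-\Tr_D(u_j)$.

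\emph{Main obstacle.} The delicate step is Step 2: checking that the orbit decomposition and unfolding produce precisely the weights $3/\pi$ and $1/(4\pi)$, with no factor-of-two errors. The sources of such errors are the elliptic point $i$, forms with $A+C=0$, and orientation (each form versus its negative giving the same unoriented geodesic). I would settle this first by testing the constant term against the known $\Tr_D(1)$ and the volume $\pi/3$.
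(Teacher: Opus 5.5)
Your proposal is correct and follows the paper's proof. As in the paper, you unfold $N(D)$ over $\Gamma_\fq\backslash\Gamma$ into $\Tr_D(F)$ with $F=\sum_{\gamma\in\Gamma}K(d(\gamma i,\cdot))$, integrate the pre-trace expansion termwise over the closed geodesics, and remove the odd $u_j$ by the reflection $R$. Two small differences are both fine: your involution $[A,B,C]\mapsto[-A,B,-C]$ has the same geodesic as the paper's $[A,-B,C]$, and your additional remark that $u_j(i)=0$ for odd $u_j$ is valid.

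The normalisation you flag as the main obstacle is automatic, and the elliptic point $i$ needs no special treatment. The class of $\fq$ is in bijection with $\Gamma_\fq\backslash\Gamma$, and $\Gamma_\fq$ is torsion-free and acts freely on $\ell_\fq$. Moreover $\fq\circ\gamma$ and $\fq\circ\gamma S$ are distinct forms with the same $A+C$, both counted in $N(D)$. So summing over all of $\PSL$ gives exactly the kernel whose pre-trace expansion has the constants $h(\frac i2)/\vol_\Gamma=\frac3\pi H(0)$ and $\frac1{4\pi}$.
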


\begin{theorem}[Hecke relation]\label{thm:heckerel}
Let $\Delta>0$ be a non-square discriminant, $p$ a prime, and $F$ a continuous $\Gamma$-invariant function. Then
\[
\Tr_\Delta(T_pF)=\Tr_{p^2\Delta}(F)+\chi_\Delta(p)\Tr_\Delta(F)+\big(p-\chi_\Delta(p)\big)\Tr_{\Delta/p^2}(F),
\]
where $T_pF(z)=\sum_{ad=p}\sum_{b\bmod d}F(\frac{az+b}d)$. Consequently, if $D=f^2D_0$ with $D_0$ fundamental,
\[
\Tr_D(1)=\Tr_{D_0}(1)\sum_{d\mid f}\mu(d)\chi_{D_0}(d)\sigma\Big(\frac fd\Big),\qquad \Tr_D(u_j)=\Tr_{D_0}(u_j)\sum_{d\mid f}\mu(d)\chi_{D_0}(d)\sqrt{\frac fd}\,\lambda_j\Big(\frac fd\Big),
\]
\[
\Tr_D\big(E(\cdot,s)\big)=\Tr_{D_0}\big(E(\cdot,s)\big)\sum_{d\mid f}\mu(d)\chi_{D_0}(d)D_{f/d}(s).
\]
\end{theorem}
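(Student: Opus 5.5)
The plan is to prove the Hecke relation by unfolding $\Tr_\Delta(T_pF)$ and sorting the $p+1$ Hecke translates of each closed geodesic by the discriminant of the transported form. The consequences then follow by induction on $f$, since the three displayed identities are all instances of one multiplicative recursion.

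\emph{Step 1: unfold.} Write $T_pF(z)=\sum_{\alpha\in\Gamma\backslash\Delta_p}F(\alpha z)$, where $\Delta_p$ is the set of integer matrices of determinant $p$. Then
\[
\Tr_\Delta(T_pF)=\sum_{[\fq]}\sum_{\alpha}\int_{C_\fq}F(\alpha z)\dd s=\sum_{[\fq]}\sum_\alpha\int_{\alpha C_\fq}F\dd s,
\]
because $\alpha$ acts on $\Hh$ as an isometry. If $\fq$ has matrix $S$, then $\alpha C_\fq$ is the geodesic of the form $\fq\circ\alpha^{-1}$, scaled by $p$: its matrix is $\alpha^{\mathrm{adj}\,T}S\alpha^{\mathrm{adj}}$, which is integral of discriminant $p^2\Delta$. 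Dividing out the content $c\in\{1,p,p^2\}$ gives a form $\fq'$ of discriminant $p^2\Delta$, $\Delta$ or $\Delta/p^2$, with the same geodesic.

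The integrals need care. $\int_{\alpha C_\fq}$ runs over the image of one period of $\fq$. This equals one period of $\fq'$ times the index of $\alpha\,\mathrm{Stab}(\fq)\,\alpha^{-1}\cap\Gamma$ in $\mathrm{Stab}(\fq')$, or more simply it must be handled as an orbit count. I would therefore rewrite both sides as sums over $\Gamma$-orbits of pairs. Concretely, I would work with the set $\{(\fq,\fq')\}$ of forms of discriminants $\Delta$ and $\Delta'$ related by some $\alpha\in\Delta_p$, counted modulo the diagonal $\Gamma$-action. Each $\fq'$ in $\Qf(\Delta')$ then receives a multiplicity $m(\fq')$, namely the number of $\alpha\in\Delta_p$ modulo stabilisers mapping some $\fq$ of discriminant $\Delta$ to $\fq'$.

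\emph{Step 2: local count.} This is a count modulo $p$ and is the main obstacle, because it is where the coefficients $1$, $\chi_\Delta(p)$ and $p-\chi_\Delta(p)$ arise. Fix a target $\fq'$ and count the $\alpha^{-1}$ with $p\alpha^{-1}\in\Delta_p$ such that $\fq'\circ(p\alpha^{-1})$ is divisible appropriately. Equivalently, count the $p+1$ index-$p$ sublattices $L\subset\Z^2$ and evaluate the content of $\fq'|_L$.

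\begin{itemize}
\item For $\Delta'=p^2\Delta$, every $\fq'$ arises exactly once from the right discriminant. This holds for non-primitive $\fq'$ as well, once one uses the relation $\Delta'/p^2=\Delta$.
\item For $\Delta'=\Delta$, a sublattice on which $\fq'$ is divisible by $p$ corresponds to an isotropic line of $\fq'\bmod p$. The number of such lines is $1+\chi_\Delta(p)$ when $p\nmid$ the content, and this must be combined with the content-$p$ case. The net contribution is $\chi_\Delta(p)$ once the $\Delta'=p^2\Delta$ bookkeeping is subtracted; I would check this carefully by splitting into cases according to whether $p\mid\Delta$.
\item For $\Delta'=\Delta/p^2$, the form $p\fq'$ restricted to each of the $p+1$ lattices contributes, minus the isotropic-line overlaps, giving $p+1-(1+\chi_\Delta(p))=p-\chi_\Delta(p)$.
\end{itemize}

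Here $\Tr_{\Delta/p^2}=0$ when $\Delta/p^2$ is not a discriminant, consistent with the convention. Stabiliser indices cancel against the change in period length, because $\int$ over a fundamental segment is intrinsic to the orbit sum. I would verify this by rewriting $\Tr_\Delta(F)=\sum_{\fq\in\Qf(\Delta)}\int_{\Gamma_\fq\backslash C_\fq}F$ and unfolding everything to the full set $\Qf$.

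\emph{Step 3: consequences.} Take $F=1$, $u_j$ and $E(\cdot,s)$, for which $T_pF$ equals $(p+1)F$, $\sqrt p\lambda_j(p)F$ and $D_p(s)F$ respectively. Each of these eigenvalues $\nu(p)$ extends to a Hecke-multiplicative function $\nu(m)$ satisfying $\nu(p)\nu(p^k)=\nu(p^{k+1})+p\,\nu(p^{k-1})$. Set $t_k=\Tr_{p^{2k}\Delta_1}(F)$, where $p\nmid$ the conductor of $\Delta_1$. The relation gives $t_{k+1}=\nu(p)t_k-\chi(p)t_k-(p-\chi(p))t_{k-1}$, with $t_{-1}=0$. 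Note that $\chi_{p^{2k}\Delta_1}(p)=0$ for $k\ge1$, so the recursion is $t_{k+1}=\nu(p)t_k-p\,t_{k-1}$ for $k\ge1$, and $t_1=(\nu(p)-\chi(p))t_0$. Hence $t_k=(\nu(p^k)-\chi(p)\nu(p^{k-1}))t_0$, by induction using the Hecke recursion. Different primes commute because the $T_p$ commute and $\chi_{D_0}$ is unchanged by square factors coprime to $p$. The result is $\Tr_D(F)=\Tr_{D_0}(F)\sum_{d\mid f}\mu(d)\chi_{D_0}(d)\nu(f/d)$, which for $F=1$ gives $\nu=\sigma$, matching the displayed formulas.
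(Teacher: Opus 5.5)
Your outline is the paper's argument. Both unfold $\Tr_\Delta(T_pF)$ into a sum over the forms $\fQ=\fq\circ\mathrm{adj}(\nu)\in\Qf(p^2\Delta)$, and weight each $\fQ$ by the number $\nu_p(\fQ)$ of index-$p$ sublattices $L\subset\Z^2$ with $\fQ(L)\subset p^2\Z$. Both then need $\nu_p(\fQ)=1$, $1+\chi_\Delta(p)$ or $p+1$ according as the $p$-content of $\fQ$ is $1$, exactly $p$, or divisible by $p^2$, and split $\Qf(p^2\Delta)$ by $p$-content using $C_{p\fQ}=C_\fQ$. Your inclusion--exclusion in Step~2 is this bookkeeping, and your Step~3 recursion (with $\Tr_{\Delta_1/p^2}=0$ and $\chi_{p^{2k}\Delta_1}(p)=0$ for $k\ge1$) is the paper's.

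What is missing is the case of $p$-content $1$. Your first bullet, ``every $\fq'$ arises exactly once, also for non-primitive $\fq'$'', is false as written. Forms of $p$-content $p$ and $p^2$ arise $1+\chi_\Delta(p)$ and $p+1$ times, as your other two bullets in fact assume. What you need is that a $p$-primitive $\fQ$ of discriminant $p^2\Delta$ has \emph{exactly one} admissible sublattice, and you give no reason for either existence or uniqueness. The paper argues as follows.
\begin{enumerate}
\item Since $p\mid p^2\Delta$, $\fQ\bmod p$ is non-zero and degenerate. So in a suitable basis $\fQ=[a,b,c]$ with $p\nmid a$, $p\mid b$, $p\mid c$.
\item Expanding $\fQ(v+pw)=\fQ(v)+pB'(v,w)+p^2\fQ(w)$ shows that only the radical line $\langle e_2\rangle$ can work. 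For $p=2$ this holds because $B'$ is even and $\fQ(v)$ must be even.
\item That line does work because $p^2\mid c$. For odd $p$ this follows from $p^2\mid b^2-4ac$. For $p=2$ it follows from $b'^2-ac=\Delta$ with $b=2b'$, treating $\Delta\equiv1\bmod4$ and $4\mid\Delta$ separately.
\end{enumerate}
Likewise, at $p=2$ your isotropic-line count for $p$-content $p$ cannot be settled ``according to whether $p\mid\Delta$''. There $\chi_\Delta(2)$ depends on $\Delta\bmod8$, and one checks directly: two lines if $b$ is odd and $ac$ even, none if $a,b,c$ are all odd, one if $b$ is even.

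The unfolding also needs a different set-up from the one you sketch. The individual integrals $\int_{C_\fq}F(\alpha z)\dd s$ are not well defined, because $\Gamma_\fq$ permutes the cosets $\Gamma\alpha$. Moreover, ``$\sum_{\fq\in\Qf(\Delta)}\int_{\Gamma_\fq\backslash C_\fq}F$'' is not meaningful for a general continuous $\Gamma$-invariant $F$. The paper avoids stabiliser indices altogether:
\begin{enumerate}
\item Both sides are integrals of $F$ against finite measures on a fixed compact set. So one may replace $F$ there by $F_\varphi=\sum_{\gamma}\varphi\circ\gamma$ with $\varphi\in C_c(\Hh)$.
\item Then $\Tr_\Delta(F_\varphi)=\sum_{\fq\in\Qf(\Delta)}\int_{\ell_\fq}\varphi\dd s$.
\item Also $T_pF_\varphi(w)=\sum_{\mu\in M_p}\varphi(\mu w)$ unfolds directly, and $\nu_p(\fQ)$ is visibly $\Gamma$-invariant.
\end{enumerate}
Your orbit-counting of pairs could be made to work, but this reduction is what makes it clean.
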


\begin{remark}[The square case as the case $D_0=1$]\label{rem:D01}
For $D_0=1$ and $f=n$ the twist $\chi_1$ is trivial, and the three factors in Theorem~\ref{thm:heckerel} become $\sum_{d\mid n}\mu(n/d)\sigma(d)=n$, $A_j(n)$ and $A(n,t)$. These are exactly the factors in Theorem~\ref{thm:sqexact}, where the single geodesic from $0$ to $\infty$ plays the role of the traces at $D_0$. The main term changes shape because the period of the constant function along this geodesic diverges and must be regularised: this produces $\Psi_1$ and $\ell(n)$. Theorem~\ref{thm:heckerel} is the trace version of the classical $T_{p^2}$-relation in half-integral weight; our proof is elementary and self-contained.
\end{remark}

\begin{remark}[Theta functions]
Let $L=\{x\in\Z^3:x_2\equiv x_3\bmod2\}$. Siegel's indefinite theta function $\vartheta_L(\tau,z)=\sum_{x\in L}e(Q(x)u)e^{-2\pi vQ_z(x)}$ (non-holomorphic, of weight $\frac12$ in $\tau=u+iv$, and $\Gamma$-invariant in $z\in\Hh$) has $D$-th Fourier coefficient $e^{-2\pi vD}\sum_{x\in L,\,Q(x)=D}e^{-4\pi vx_3^2}$ at $z=i$. This is $N(D)$ with a Gaussian weight of width $M\asymp v^{-1/2}$. Theorems~\ref{thm:sqexact} and~\ref{thm:nsexact} are the corresponding coefficients of the spectral expansion of $\vartheta_L$ in $z$, i.e.\ of the Shintani--Katok--Sarnak lift \cite{KS93}, made explicit. This remark is not used below.
\end{remark}

\subsection{Contributions of the author and of Claude}\label{sec:contrib}
This paper is the outcome of a collaboration between the author and Claude (Opus~5.5, High), an AI model developed by Anthropic. The two roles were as follows.

\emph{Stephan Baier.}
\begin{itemize}
\item Posed the problem and wrote the original paper \cite{B24} on the square case.
\item Raised the doubts about its main term, and asked whether averages of Kloosterman sums could improve its error term. This led to the corrected and improved version \cite{B26}.
\item Insisted that the connection with (non-holomorphic) theta functions be explored, which led to the spectral approach of this paper.
\item Required that all $n$, and the full lattice $\Z^3$, be covered in the square case, and asked for the extension to general discriminants.
\item Fixed the standards (every claim proved or explicitly flagged as cited or open) and the scope, and evaluated the successive drafts.
\end{itemize}

\emph{Claude.}
\begin{itemize}
\item Proposed the spectral approach: the counting function as a sum over geodesics, reduced by Hecke operators to a single geodesic (square case) or to the fundamental discriminant (non-square case).
\item Proved Theorems~\ref{thm:sqexact}--\ref{thm:heckerel}, including the sublattice count behind the Hecke relation, and derived Theorems~\ref{thm:sq} and~\ref{thm:ns} and the corollaries.
\item Proved the parity reductions of Section~\ref{sec:binary}, and worked out the theta group case: its Hecke operators and its explicit Eisenstein series.
\item Proved the mean-square bounds for the periods (Propositions~\ref{periodmean} and~\ref{prop:ms}), identifying and combining the cited inputs \cite{DI, Pa, Kim, DIT, BM, Yo, Mo}.
\item Carried out the numerical checks (Tables~\ref{tab:sq} and~\ref{tab:ns} and the further checks in Subsection~\ref{sec:numchecks}), checked the cited statements against the sources available online, wrote the \LaTeX{} text, and made a line-by-line check of the final version.
\end{itemize}

Results from the literature are stated as Facts with references and are not re-proved here. The author takes responsibility for the final version.

\subsection*{Plan of the paper}
\begin{enumerate}[leftmargin=*]
\item \emph{Section~\ref{sec:binary}.} Binary forms, the parity reductions of $\Rr_e(D)$ to $N(D)$ and $N_\vartheta(D)$, and the distance formula. This part holds for all $D$.
\item \emph{Section~\ref{sec:auto}.} Automorphic tools: the theta group, Hecke operators, Eisenstein series, and the facts from the literature.
\item \emph{Section~\ref{sec:kernel}.} A point-pair invariant whose integral along any geodesic is the weight $\Phi(x_3/M)$.
\item \emph{Section~\ref{sec:square}.} Square discriminants: reduction to one geodesic, periods, the spectral expansion, and the proofs of Theorems~\ref{thm:sqexact}, \ref{thm:sqasymp} and~\ref{thm:sq}.
\item \emph{Section~\ref{sec:nonsquare}.} Non-square discriminants: closed geodesics, the Hecke relation, periods at fundamental discriminants, and the proofs of Theorems~\ref{thm:nsexact}, \ref{thm:heckerel}, \ref{thm:ns} and Corollary~\ref{cor:tern}.
\item \emph{Section~\ref{sec:num}.} Numerical checks and open problems.
\end{enumerate}

\subsection*{Index of notation}\mbox{}\par\smallskip
{\small\noindent
\begin{tabularx}{\textwidth}{@{}>{\raggedright}p{0.27\textwidth}>{\raggedright}X>{\raggedright\arraybackslash}p{0.16\textwidth}@{}}
\toprule
Symbol & Meaning & Defined in\\
\midrule
$Q$, $\Phi$, $M$, $\Psi_0$, $\theta$ & ternary form, weight, range, $\int\Phi(x/M)\dd x$, $\frac7{64}$ & \S\ref{sec:notation}\\
$\Rr_e(D)$, $\Rr(D)$ & weighted counts of $Q(x)=D$ with $x_3\equiv e\bmod 2$, resp.\ all $x$ & \S\ref{sec:problem}\\
$\Qf(\Delta)$, $\Qf^\vartheta(\Delta)$ & binary forms of discriminant $\Delta$; those with $A,C$ even & \S\ref{sec:notation}\\
$N(D)=N_\Gamma(D)$, $N_\vartheta(D)$ & weighted counts of forms & \S\ref{sec:notation}\\
$\Psi_1$, $\ell(n)$, $\lo(n)$, $v$, $c_1$, $\beta_e(n)$ & square case: main-term data & \S\ref{sec:notation}\\
$D=f^2D_0$, $\chi_\Delta$, $\h_{D_0}(f)$ & non-square case: conductor, character, Hecke factor & \S\ref{sec:notation}\\
$G$, $\mathcal A_G$, $\vol_G$, $w_G$ & $\Gamma=\PSL$ or $\Gt$; admissible $n$; volume; cusp width & \S\ref{sec:exact}\\
$u_j$, $t_j$, $\lambda_j(d)$, $\J_j$ & cusp forms, spectral parameters, Hecke eigenvalues, periods & \S\ref{sec:exact}, \S\ref{sec:auto}, \S\ref{sec:sqper}\\
$T^G_d$, $B_G(d')$, $M^G_d$ & Hecke operators, translations, matrices of determinant $d$ & \S\ref{sec:heckeops}\\
$E$, $E_2$, $E^G_{\mathfrak a}$, $e_G$ & Eisenstein series & \S\ref{sec:notation}, \S\ref{sec:auto}\\
$\J(s)$, $q_G(t)$, $D_d(s)$, $A_j(n)$, $A(n,t)$ & Eisenstein period, factors in the exact formulas & \S\ref{sec:exact}, \S\ref{sec:square}\\
$\Mm_G(n)$, $\MM_e(n;M)$ & main terms of $N_G(n^2)$ and of $\Rr_e(n^2)$ & Thm.~\ref{thm:sqexact}, \S\ref{sec:square}\\
$\fq$, $Q_\fq(z)$, $\ell_\fq$, $C_\fq$ & binary form, $(A|z|^2+Bx+C)/y$, its geodesic, closed geodesic & \S\ref{sec:binary}, \S\ref{sec:closed}\\
$\Tr_\Delta(F)$ & sum of integrals of $F$ over the closed geodesics of discriminant $\Delta$ & \S\ref{sec:exact}\\
$\delta$, $k$, $r_0$, $K$, $h$, $H$ & $M/\sqrt D$, weight in the distance, kernel and its transforms & \S\ref{sec:binary}, \S\ref{sec:kernel}\\
\bottomrule
\end{tabularx}}

\section{Binary forms, parity and geodesics}\label{sec:binary}

\begin{lemma}\label{corresp}
Let $D\equiv0,1\bmod4$. The map $(A,B,C)\mapsto x=(B,A-C,A+C)$ is a bijection from $\Qf(D)$ onto $\{x\in\Z^3:Q(x)=D,\ x_2\equiv x_3\bmod2\}$, with $x_3=A+C$. For $D\equiv1\bmod8$ it maps $\Qf^\vartheta(D)$ onto the subset with $x_2\equiv x_3\equiv0\bmod 2$.
\end{lemma}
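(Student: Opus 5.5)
The plan is to verify the identity, then injectivity, then surjectivity together with the parity conditions, and finally the theta-group refinement.

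\emph{The identity.} For $x=(B,A-C,A+C)$ we have
\[
Q(x)=B^2+(A-C)^2-(A+C)^2=B^2-4AC ,
\]
so every $[A,B,C]\in\Qf(D)$ maps to a point with $Q(x)=D$ and $x_3=A+C$. Moreover $x_2-x_3=-2C$ is even, so the image lies in the stated set.

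\emph{Injectivity.} The map is linear with inverse
\[
A=\frac{x_2+x_3}2,\qquad C=\frac{x_3-x_2}2,\qquad B=x_1 ,
\]
so it is injective.

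\emph{Surjectivity.} Let $x\in\Z^3$ with $Q(x)=D$ and $x_2\equiv x_3\bmod2$. Then the formulas above give integers $A,B,C$, and $B^2-4AC=Q(x)=D$. So $[A,B,C]\in\Qf(D)$ is a preimage of $x$. The hypothesis $D\equiv0,1\bmod4$ is only needed so that $\Qf(D)$ is the set of forms of a genuine discriminant; the argument itself does not use it.

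\emph{The theta-group refinement.} Let $D\equiv1\bmod8$.
\begin{itemize}
\item If $A\equiv C\equiv0\bmod2$, then $x_2=A-C$ and $x_3=A+C$ are both even.
\item Conversely, if $x_2\equiv x_3\equiv0\bmod2$, write $A=(x_2+x_3)/2$ and $C=(x_3-x_2)/2$. Then $A$ and $C$ need not both be even; only $A+C=x_3$ is even, so $A\equiv C\bmod2$.
\end{itemize}
So it remains to exclude $A$ and $C$ both odd, and this is the step that uses $D\equiv1\bmod8$. In that case $B$ is odd, so $B^2\equiv1\bmod8$. If $A$ and $C$ were both odd, then $4AC\equiv4\bmod8$, and
\[
D=B^2-4AC\equiv1-4\equiv5\bmod8 ,
\]
a contradiction. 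Hence $A\equiv C\equiv0\bmod2$.

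This shows that the bijection restricts to a bijection from $\Qf^\vartheta(D)$ onto the points with $x_2\equiv x_3\equiv0\bmod2$. The only point needing care is this $\bmod 8$ check in the last step; everything else is linear algebra.
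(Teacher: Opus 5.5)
Your proof is correct and takes the same route as the paper. It uses the identity $Q(B,A-C,A+C)=B^2-4AC$, the explicit inverse (integral exactly when $x_2\equiv x_3\bmod 2$), and the observation that for $D\equiv1\bmod8$ the odd $B$ forces $AC$ to be even, so $A\equiv C\bmod2$ becomes equivalent to $A\equiv C\equiv0\bmod2$.
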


\begin{proof}
\emph{Bijection.} We have $Q(B,A-C,A+C)=B^2-4AC$. The inverse is $A=\frac{x_3+x_2}2$, $C=\frac{x_3-x_2}2$, $B=x_1$, which is integral if and only if $x_2\equiv x_3\bmod2$.

\emph{The case $D\equiv1\bmod 8$.} Here $B$ is odd, so $B^2\equiv1\equiv D\bmod 8$, hence $8\mid4AC$ and $AC$ is even. Therefore $A\equiv C\bmod 2$ (i.e.\ $x_2,x_3$ even) is equivalent to $A\equiv C\equiv0\bmod2$.
\end{proof}

\begin{proposition}[Parity reduction]\label{prop:par}
Let $1\le M\le\sqrt D$.
\begin{enumerate}[label=(\alph*)]
\item If $D\equiv1\bmod 8$, then $\Rr_0(D)=2N_\vartheta(D)$ and $\Rr_1(D)=N(D)-N_\vartheta(D)$.
\item If $D\equiv5\bmod 8$, then $\Rr_1(D)=0$ and $\Rr(D)=\Rr_0(D)=2N(D)$.
\item If $4\mid D$, then $\Rr_0(D;M)=\Rr(\frac D4;\frac M2)$ and $\Rr_1(D;M)=2N(D;M)-2\Rr(\frac D4;\frac M2)$.
\end{enumerate}
\end{proposition}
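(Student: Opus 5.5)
The plan is to reduce everything to Lemma~\ref{corresp}, using the symmetry $x_1\leftrightarrow x_2$ of $Q$ and the evenness of $\Phi$ only through the fact that the weight depends on $x_3$ alone. Both $\Phi(x_3/M)$ and $Q$ are invariant under swapping $x_1,x_2$ and under sign changes of $x_1,x_2$. So I will sort the solutions of $Q(x)=D$ according to the parities of $(x_1,x_2,x_3)$ and identify each parity class, up to the swap, with a set of binary forms.

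\emph{(a) and (b).} Let $D$ be odd. Then $x_1^2+x_2^2-x_3^2\equiv1\bmod 2$, so an odd number of the $x_i$ are odd.
\begin{itemize}
\item If $x_3$ is odd, then $x_1,x_2$ are both odd or both even. Both odd gives $Q\equiv1\bmod 8$, since odd squares are $\equiv1\bmod8$. Both even gives $x_1^2+x_2^2\equiv D+1\bmod 8$.
\item If $x_3$ is even, exactly one of $x_1,x_2$ is odd.
\end{itemize}
Reducing mod $4$: for $x_3$ odd, the both-even case gives $x_1^2+x_2^2\equiv D+1\equiv2\bmod4$, which is impossible since a sum of two even squares is $0\bmod 4$. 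So $x_1,x_2$ are both odd, and then $Q\equiv1+1-1=1\bmod 8$. Hence for $D\equiv5\bmod8$ there are no solutions with $x_3$ odd, i.e.\ $\Rr_1(D)=0$.

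For $x_3$ even, the solutions with $x_2$ even and those with $x_1$ even are exchanged by the swap, and the two sets are disjoint. Hence $\Rr_0(D)=2\cdot\#\{x_2\equiv x_3\equiv0\}$, weighted. When $D\equiv5\bmod8$, every solution with $x_2\equiv x_3$ has $x_3$ even, so this weighted set is all of $N(D)$ by Lemma~\ref{corresp}, giving (b).

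When $D\equiv1\bmod8$, the set with $x_2\equiv x_3\equiv0$ is $N_\vartheta(D)$ by the second part of Lemma~\ref{corresp}, so $\Rr_0(D)=2N_\vartheta(D)$. For $x_3$ odd, $x_1,x_2$ are both odd, so $x_2\equiv x_3$ holds automatically. Therefore $\Rr_1(D)=N(D)-(\text{part with }x_3\text{ even})=N(D)-N_\vartheta(D)$.

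\emph{(c).} Let $4\mid D$. If $x_3$ is even, then $x_1^2+x_2^2\equiv0\bmod4$ forces $x_1,x_2$ even. Writing $x=2y$ gives a bijection onto all solutions of $Q(y)=D/4$, with weight $\Phi(y_3/(M/2))$. This gives $\Rr_0(D;M)=\Rr(D/4;M/2)$.

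If $x_3$ is odd, then $x_1^2+x_2^2\equiv1\bmod4$, so exactly one of $x_1,x_2$ is odd. The class with $x_2$ odd, i.e.\ $x_2\equiv x_3$, is swapped bijectively with the class with $x_1$ odd. Hence $\Rr_1(D)$ is twice the weighted count of solutions with $x_2\equiv x_3$ and $x_3$ odd. By Lemma~\ref{corresp} that count is $N(D)$ minus the part with $x_2\equiv x_3\equiv0$. That part is all solutions with $x_3$ even, because $x_2$ is then forced even, so it equals $\Rr_0(D;M)=\Rr(D/4;M/2)$. This gives the formula for $\Rr_1$.

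The only delicate step is the congruence bookkeeping. In particular, the correct subtraction in (c) relies on the fact that $x_3$ even forces $x_2$ even. The hypothesis $M\le\sqrt D$ plays no role.
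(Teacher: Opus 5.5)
Your proof is correct and is essentially the paper's argument. Both sort the solutions by the parity vector of $x$, pair the two mixed classes by the swap $x_1\leftrightarrow x_2$, identify the class with $x_2\equiv x_3$ through Lemma~\ref{corresp}, and in case (c) rescale $x=2y$. One small point: your mod-$4$ step ``$D+1\equiv2\bmod4$'' needs $D\equiv1\bmod4$, not merely $D$ odd, but this holds in (a) and (b).
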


\begin{proof}
\emph{Parity types.} Let $Q(x)=D$ and let $\pi(x)\in(\Z/2)^3$ be the vector of parities of $x$. Since squares are $\equiv0,1\bmod4$, we have
\[
Q(x)\equiv\#\{i\le2:x_i\ \mathrm{odd}\}-[x_3\ \mathrm{odd}]\bmod4 .
\]
Hence:
\begin{itemize}
\item for $D\equiv1\bmod4$, $\pi(x)\in\{(1,0,0),(0,1,0),(1,1,1)\}$;
\item for $D\equiv0\bmod 4$, $\pi(x)\in\{(0,0,0),(1,0,1),(0,1,1)\}$.
\end{itemize}
Since odd squares are $\equiv1\bmod8$, type $(1,1,1)$ has $Q(x)\equiv1\bmod 8$. The swap $(x_1,x_2,x_3)\mapsto(x_2,x_1,x_3)$ preserves $Q$ and $x_3$, and exchanges $(1,0,0)\leftrightarrow(0,1,0)$ and $(1,0,1)\leftrightarrow(0,1,1)$. By Lemma~\ref{corresp}, $N(D)$ is the weighted count of the types with $x_2\equiv x_3$, and for $D\equiv1\bmod 8$, $N_\vartheta(D)$ is that of type $(1,0,0)$.

\emph{(a)} $\Rr_0$ counts types $(1,0,0)$ and $(0,1,0)$, which by the swap give $2N_\vartheta$. $\Rr_1$ counts type $(1,1,1)$, which is $N-N_\vartheta$.

\emph{(b)} Type $(1,1,1)$ does not occur, so $\Rr_1=0$, and $N(D)$ counts type $(1,0,0)$ only. Hence $\Rr_0=2N$.

\emph{(c)} $\Rr_0$ counts type $(0,0,0)$, i.e.\ $x=2y$ with $Q(y)=D/4$ and weight $\Phi(y_3/(M/2))$. $\Rr_1$ counts types $(1,0,1)$ and $(0,1,1)$, i.e.\ twice type $(0,1,1)$, and type $(0,1,1)$ is $N(D;M)-\Rr(\frac D4;\frac M2)$.
\end{proof}

For $D\equiv2,3\bmod4$ no solution has $x_2\equiv x_3\bmod2$; these $D$ are not considered here.

For a real form $\fq=[A,B,C]$ and $z=x+iy\in\Hh$ put $Q_\fq(z)=(A|z|^2+Bx+C)/y$, so that $Q_\fq(i)=A+C$. $\mathrm{SL}_2(\R)$ acts on forms by $(\fq\circ\gamma)(u,v)=\fq(\gamma(u,v)^T)$, and we write $S=\left(\begin{smallmatrix}0&-1\\1&0\end{smallmatrix}\right)$, $T=\left(\begin{smallmatrix}1&1\\0&1\end{smallmatrix}\right)$ and $R:z\mapsto-\bar z$.

\begin{lemma}\label{dist}
Let $\fq(u,v)=(au+bv)(cu+dv)$ with $\mu=\left(\begin{smallmatrix}a&b\\c&d\end{smallmatrix}\right)\in\mathrm{GL}_2(\R)$ and $\det\mu=m>0$. Then:
\begin{enumerate}[label=(\roman*)]
\item $\fq$ has discriminant $m^2$ and $Q_\fq(z)=m\,\Re(\mu z)/\Im(\mu z)$;
\item $Q_{\fq\circ\gamma}(z)=Q_\fq(\gamma z)$ and $\ell_{\fq\circ\gamma}=\gamma^{-1}\ell_\fq$ for $\gamma\in\mathrm{SL}_2(\R)$;
\item $Q_\fq(z)=m\sinh d_\pm(z,\ell_\fq)$, where $\ell_\fq=\mu^{-1}(i\R_{>0})$ is the geodesic joining the roots of $\fq(x,1)$, and $d_\pm$ is the signed hyperbolic distance.
\end{enumerate}
\end{lemma}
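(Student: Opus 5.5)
The plan is to establish (i) by direct computation, prove (ii) by a change of variables, and obtain (iii) by combining the two and reducing to the standard geodesic $i\R_{>0}$.

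\emph{Step (i).} Expanding gives $\fq=[ac,\,ad+bc,\,bd]$. Its discriminant is
\[
(ad+bc)^2-4abcd=(ad-bc)^2=m^2 .
\]
For $Q_\fq$, note that $A|z|^2+Bx+C=\Re\big((az+b)\overline{(cz+d)}\big)$: the real part of $(az+b)(c\bar z+d)$ is $ac|z|^2+(ad+bc)x+bd$. On the other side, $\Im(\mu z)=my/|cz+d|^2$, and $\Re(\mu z)=\Re\big((az+b)\overline{(cz+d)}\big)/|cz+d|^2$. Dividing these gives $\Re(\mu z)/\Im(\mu z)=Q_\fq(z)/m$, which is (i).

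\emph{Step (ii).} Both sides of $Q_{\fq\circ\gamma}(z)=Q_\fq(\gamma z)$ are linear in $\fq$, so I would check it only on products of linear forms (or on all forms directly). For $\fq=\fq_\mu$ as in (i) we have $\fq\circ\gamma=\fq_{\mu\gamma}$, with the same determinant $m$ since $\det\gamma=1$. Then (i) gives
\[
Q_{\fq\circ\gamma}(z)=m\,\Re(\mu\gamma z)/\Im(\mu\gamma z)=Q_\fq(\gamma z).
\]
Any real form of positive discriminant factors as $\fq_\mu$ with $\det\mu>0$, after swapping the factors if necessary. That suffices for the geodesic statement, since forms of nonpositive discriminant have no real geodesic to consider. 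The roots of $(\fq\circ\gamma)(x,1)$ are $\gamma^{-1}$ applied to the roots of $\fq(x,1)$, so $\ell_{\fq\circ\gamma}=\gamma^{-1}\ell_\fq$.

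\emph{Step (iii).} The roots of $\fq(x,1)$ are $x=-b/a$ and $x=-d/c$, which are $\mu^{-1}(0)$ and $\mu^{-1}(\infty)$. Hence $\ell_\fq=\mu^{-1}(i\R_{>0})$. Write $\mu=\sqrt m\,\mu_1$ with $\mu_1\in\mathrm{SL}_2(\R)$. Scaling by $\sqrt m$ does not change the Möbius action, so $\mu_1$ is an isometry of $\Hh$ carrying $\ell_\fq$ onto $i\R_{>0}$. Therefore
\[
d_\pm(z,\ell_\fq)=d_\pm(w,i\R_{>0}),\qquad w=\mu z=u+iv .
\]
The standard formula for the distance from $w$ to the imaginary axis is $\sinh d(w,i\R_{>0})=|u|/v$. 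To derive it, use the dilation-invariance along the axis to reduce to $|w|=1$. Then the perpendicular foot is $i$, and $\sinh$ of the distance from $e^{i\phi}$ to $i$ along the unit circle equals $\cot\phi$. Taking the sign of $u$ as the orientation gives $\sinh d_\pm=u/v$, and (i) then yields $Q_\fq(z)=m\sinh d_\pm(z,\ell_\fq)$.

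The only delicate point is the sign convention for $d_\pm$. I would define $d_\pm$ so that it is positive on the side where $Q_\fq>0$, and check that this matches the orientation of $\ell_\fq$ from $\mu^{-1}(0)$ to $\mu^{-1}(\infty)$. Replacing $\mu$ by the matrix with its rows swapped changes the sign of $m$, and this is excluded by the requirement $\det\mu=m>0$. With that convention, the sign is consistent and well defined.
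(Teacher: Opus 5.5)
Your proof is correct and follows the paper's argument. It uses the expansion $(az+b)(c\bar z+d)=A|z|^2+Bx+C+imy$ for (i), the identity $\fq\circ\gamma=\fq_{\mu\gamma}$ for (ii), and, for (iii), transport of $\sinh r=\Re w/\Im w$ by $\mu$ normalised to determinant one, with endpoints $\mu^{-1}0=-b/a$ and $\mu^{-1}\infty=-d/c$. The linearity detour in (ii) is unnecessary, since $\fq$ is already given in factored form, but it is harmless.
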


\begin{proof}
\emph{(i)} We have $A=ac$, $B=ad+bc$, $C=bd$, so $B^2-4AC=m^2$. Moreover $(az+b)(c\bar z+d)=A|z|^2+Bx+C+imy=|cz+d|^2\mu z$; compare real and imaginary parts.

\emph{(ii)} The form $\fq\circ\gamma$ corresponds to $\mu\gamma$.

\emph{(iii)} The signed distance $r$ from $w$ to $i\R_{>0}$ satisfies $\sinh r=\Re w/\Im w$. Apply the isometry $\mu$, scaled to determinant $1$. The endpoints of $\mu^{-1}(i\R_{>0})$ are $\mu^{-1}0=-b/a$ and $\mu^{-1}\infty=-d/c$.
\end{proof}

Every real form of positive discriminant $\Delta$ factors over $\R$ as in Lemma~\ref{dist} with $m=\sqrt\Delta$. Let $\delta=M/\sqrt D\in(0,1]$ and $k(r)=\Phi(\sinh r/\delta)$. Since $\Phi$ has support in $[-2,2]$, $k$ is supported in $|r|\le r_0:=\operatorname{arcsinh}(2\delta)\le\operatorname{arcsinh}2$.

\begin{cor}\label{geosum}
For every discriminant $D>0$, and for $\Qf^\vartheta$ when $D\equiv1\bmod8$,
\[
N(D)=\sum_{\fq\in\Qf(D)}k\big(\mathrm{dist}(i,\ell_\fq)\big),\qquad N_\vartheta(D)=\sum_{\fq\in\Qf^\vartheta(D)}k\big(\mathrm{dist}(i,\ell_\fq)\big).
\]
\end{cor}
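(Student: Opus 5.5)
The corollary follows directly from Lemma~\ref{dist}(iii) and the definition of $k$; no spectral input is needed. Fix $\fq=[A,B,C]\in\Qf(D)$ with $D>0$. Over $\R$ it factors as $\fq(u,v)=(au+bv)(cu+dv)$ with $\mu=\left(\begin{smallmatrix}a&b\\c&d\end{smallmatrix}\right)$ and $\det\mu=m=\sqrt D>0$. If $\det\mu$ comes out negative, swap the two linear factors, i.e.\ the two rows of $\mu$; this changes the sign of $\det\mu$ but not the product. The real roots are distinct since $D>0$, so this works whether $D$ is a square or not, including the degenerate cases $A=0$ or $C=0$ where a root is at $0$ or $\infty$.

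By Lemma~\ref{dist}(iii) at $z=i$, and since $Q_\fq(i)=A+C$,
\[
A+C=Q_\fq(i)=\sqrt D\,\sinh d_\pm(i,\ell_\fq).
\]
Dividing by $M$ and using $\delta=M/\sqrt D$ gives
\[
\Phi\Big(\frac{A+C}M\Big)=\Phi\Big(\frac{\sinh d_\pm(i,\ell_\fq)}{\delta}\Big)=k\big(d_\pm(i,\ell_\fq)\big).
\]
Since $\Phi$ is even and $\sinh$ is odd, $k$ is even. Hence $k(d_\pm)=k(|d_\pm|)=k(\mathrm{dist}(i,\ell_\fq))$, so the sign convention for $d_\pm$ does not matter.

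Summing over $\fq\in\Qf(D)$ yields the formula for $N(D)$, by the definition in Subsection~\ref{sec:notation}. For $D\equiv1\bmod8$, restricting the same termwise identity to $\fq\in\Qf^\vartheta(D)$ yields the formula for $N_\vartheta(D)$.

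Both sums are finite, so no convergence question arises. Indeed, $\Phi$ has support in $[-2,2]$, which forces $|A+C|\le2M$. Then $(A-C)^2+B^2=D+(A+C)^2$ is bounded, so only finitely many forms contribute.

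The only point needing care is the choice of factorization with positive determinant. Once $\det\mu>0$ is arranged, Lemma~\ref{dist} applies verbatim, so I expect no real obstacle.
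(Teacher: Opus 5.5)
Your proposal is correct and follows the paper's route: the paper's proof is the one-line observation that, by Lemma~\ref{dist}(iii), $A+C=Q_\fq(i)=\sqrt D\sinh d_\pm(i,\ell_\fq)$, together with the evenness of $k$. Your remarks on choosing a factorization with $\det\mu=\sqrt D>0$ (which the paper covers in the sentence preceding the corollary) and on the finiteness of the sums via $(A-C)^2+B^2=D+(A+C)^2$ are correct additional details.
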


\begin{proof}
By Lemma~\ref{dist}(iii), $A+C=Q_\fq(i)=\sqrt D\sinh d_\pm(i,\ell_\fq)$, and $k$ is even.
\end{proof}

\section{Automorphic tools}\label{sec:auto}

\subsection{The theta group}
$\Gt$ has index $3$ in $\Gamma=\PSL$. It contains $S$ and $T^2$, and it is normalised by $R$, since $(\begin{smallmatrix}a&-b\\-c&d\end{smallmatrix})\equiv(\begin{smallmatrix}a&b\\c&d\end{smallmatrix})\bmod2$.

Its cusps are $\infty$ and $1$:
\begin{itemize}
\item the $\Gt$-orbit of $\infty$ is $\{p/q:p\not\equiv q\bmod 2\}$, with stabiliser $\langle T^2\rangle$ (width $2$, scaling matrix $\sigma_\infty z=2z$);
\item the orbit of $1$ is $\{p/q:p\equiv q\equiv1\bmod2\}$, with stabiliser $\Gamma_1=\sigma_1\langle T\rangle\sigma_1^{-1}$, where $\sigma_1=(\begin{smallmatrix}1&0\\1&1\end{smallmatrix})$ (width $1$; indeed $\sigma_1T^k\sigma_1^{-1}\equiv(\begin{smallmatrix}0&1\\1&0\end{smallmatrix})^k\bmod2$).
\end{itemize}
Its covolume is $\vol_\vartheta=3\cdot\frac\pi3=\pi$.

For $d\ge1$ let $M_d$ be the set of integral matrices of determinant $d$ modulo $\pm1$. For odd $d$ let
\[
M_d^\vartheta=\{\mu\in M_d:\mu\equiv I\ \text{or}\ (\begin{smallmatrix}0&1\\1&0\end{smallmatrix})\bmod2\},\qquad M^{(0)}_d=\{\mu\in M_d:\mu_{21}\equiv0\bmod2\},
\]
so that $M_1^\vartheta=\Gt$ and $M_1^{(0)}=\Gamma_0(2)$. Put $M_d^\Gamma=M_d$. Let $g=(\begin{smallmatrix}2&1\\0&1\end{smallmatrix})$, which acts by $gz=2z+1$.

\begin{lemma}\label{conj}
For odd $d$, $gM_d^{(0)}g^{-1}=M^\vartheta_d$. In particular $g\Gamma_0(2)g^{-1}=\Gt$.
\end{lemma}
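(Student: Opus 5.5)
The claim is a statement about reductions mod 2. I will compute the reduction of $g\mu g^{-1}$ mod $2$, but I cannot do this directly, because $g^{-1}$ has denominator $2$. Instead I will use the equivalent formulation $\nu g=g\mu$. Here we must show two things: if $\mu\in M_d^{(0)}$, then $\nu:=g\mu g^{-1}$ is integral and lies in $M_d^\vartheta$; and conversely, every $\nu\in M_d^\vartheta$ satisfies $g^{-1}\nu g\in M_d^{(0)}$. Determinants are preserved under conjugation, and so is the sign ambiguity $\pm1$, so it suffices to work with explicit matrices.

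\emph{Forward direction.} Write $\mu=\left(\begin{smallmatrix}a&b\\c&e\end{smallmatrix}\right)$ with $c$ even and $ae-bc=d$ odd. Since $c$ is even, $a$ and $e$ are odd. A direct computation with $g^{-1}=\frac12\left(\begin{smallmatrix}1&-1\\0&2\end{smallmatrix}\right)$ gives
\[
g\mu g^{-1}=\begin{pmatrix}2a+c & -a-\frac c2+2b+e\\ \frac c2 & e-\frac c2\end{pmatrix}.
\]
This matrix is integral because $c$ is even. Its entries reduce mod $2$ as follows:
\begin{itemize}
\item the $(1,1)$ entry is $2a+c\equiv0$;
\item the $(2,1)$ entry is $c/2$;
\item the $(2,2)$ entry is $e-c/2\equiv1+c/2$;
\item the $(1,2)$ entry is $-a-c/2+2b+e\equiv c/2$, using $a\equiv e\equiv1$.
\end{itemize}
So if $c/2$ is odd, the reduction is $\left(\begin{smallmatrix}0&1\\1&0\end{smallmatrix}\right)$. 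If $c/2$ is even, the $(1,1)$ entry is $2a+c\equiv0$, while the $(2,2)$ entry is $\equiv1$. That is not $I$, so I should recheck the $(1,1)$ entry rather than trust the computation.

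Recomputing: $g\mu=\left(\begin{smallmatrix}2a+c&2b+e\\ c&e\end{smallmatrix}\right)$. Multiplying on the right by $g^{-1}=\left(\begin{smallmatrix}1/2&-1/2\\0&1\end{smallmatrix}\right)$ gives
\[
g\mu g^{-1}=\begin{pmatrix}a+\frac c2 & -a-\frac c2+2b+e\\ \frac c2 & e-\frac c2\end{pmatrix}.
\]
So the $(1,1)$ entry is $a+c/2\equiv1+c/2$, and the earlier value was an arithmetic slip. With this correction the reduction mod $2$ is
\[
g\mu g^{-1}\equiv\begin{pmatrix}1+\frac c2&\frac c2\\ \frac c2&1+\frac c2\end{pmatrix}\bmod 2.
\]
This is $I$ when $c/2$ is even and $\left(\begin{smallmatrix}0&1\\1&0\end{smallmatrix}\right)$ when $c/2$ is odd. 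In both cases $g\mu g^{-1}\in M_d^\vartheta$.

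\emph{Reverse direction.} Take $\nu=\left(\begin{smallmatrix}\alpha&\beta\\\gamma&\delta\end{smallmatrix}\right)\in M_d^\vartheta$. Compute
\[
g^{-1}\nu g=\frac12\begin{pmatrix}2\alpha-2\gamma & \alpha+\beta-\gamma-\delta\\ 4\gamma & 2\gamma+2\delta\end{pmatrix}.
\]
This is integral exactly when $\alpha+\beta+\gamma+\delta$ is even. Both admissible reductions of $\nu$, namely $I$ and $\left(\begin{smallmatrix}0&1\\1&0\end{smallmatrix}\right)$, have entry sum $2$, so this parity condition holds. The lower-left entry of $g^{-1}\nu g$ is $2\gamma$, which is even. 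Hence $g^{-1}\nu g\in M_d^{(0)}$.

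Together the two directions give the equality $gM_d^{(0)}g^{-1}=M_d^\vartheta$. The statement $g\Gamma_0(2)g^{-1}=\Gt$ is the special case $d=1$, using $M_1^{(0)}=\Gamma_0(2)$ and $M_1^\vartheta=\Gt$. The only real obstacle is bookkeeping: getting the conjugation formula right and reading off each entry mod $2$ without slips, as the first attempt at the $(1,1)$ entry shows.
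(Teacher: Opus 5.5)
Your proof is correct and follows the paper's argument: the corrected conjugate $g\mu g^{-1}$, with entries $a+\frac c2$, $2b+e-a-\frac c2$, $\frac c2$, $e-\frac c2$, reduces modulo $2$ to $I$ or $\left(\begin{smallmatrix}0&1\\1&0\end{smallmatrix}\right)$ according to the parity of $c/2$, and $g^{-1}\nu g$ is integral with even lower-left entry because $\alpha-\gamma+\beta-\delta$ is even in both residue classes. In a final write-up, delete the first (erroneous) display and the remark that $g\mu g^{-1}$ cannot be computed directly, since you end up computing it directly.
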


\begin{proof}
\emph{$gM^{(0)}_dg^{-1}\subset M^\vartheta_d$.} Let $\mu=(\begin{smallmatrix}a&b\\2c'&d_1\end{smallmatrix})\in M^{(0)}_d$; then $ad_1$ is odd. A direct computation gives
\[
g\mu g^{-1}=\begin{pmatrix}a+c'&2b+d_1-a-c'\\c'&d_1-c'\end{pmatrix},
\]
which is integral and is $\equiv I$ for $c'$ even and $\equiv(\begin{smallmatrix}0&1\\1&0\end{smallmatrix})$ for $c'$ odd.

\emph{$g^{-1}M^\vartheta_dg\subset M^{(0)}_d$.} For $\nu=(\begin{smallmatrix}\alpha&\beta\\\gamma&\delta\end{smallmatrix})\in M_d^\vartheta$,
\[
g^{-1}\nu g=\begin{pmatrix}\alpha-\gamma&(\alpha-\gamma+\beta-\delta)/2\\2\gamma&\gamma+\delta\end{pmatrix},
\]
and $\alpha-\gamma+\beta-\delta$ is even in both residue classes of $\nu$.
\end{proof}

\subsection{Hecke operators}\label{sec:heckeops}
Let $B_\Gamma(d')$ be a complete residue system modulo $d'$. Let $B_\vartheta(d')$ be the set of even residues modulo $2d'$, i.e.\ $\{0,2,\dots,2d'-2\}$. For $G$-invariant $F$ and $d$ coprime to $w_G$ put
\[
(T_d^GF)(z)=\sum_{ad'=d}\ \sum_{b\in B_G(d')}F\Big(\frac{az+b}{d'}\Big),\qquad T_d=T_d^\Gamma .
\]

\begin{lemma}\label{hecke}
Let $d$ be coprime to $w_G$.
\begin{enumerate}[label=(\roman*)]
\item The matrices $(\begin{smallmatrix}a&b\\0&d'\end{smallmatrix})$, $ad'=d$, $b\in B_G(d')$, form a system of representatives of $G\backslash M^G_d$, and $GM_d^GG=M^G_d$.
\item $T^G_d$ maps $G$-invariant functions to $G$-invariant functions, $T_d^G1=\sigma(d)$, and $T^\vartheta_dF=T_dF$ for $\Gamma$-invariant $F$.
\item $T^\vartheta_dF=\big(T^{(0)}_d(F\circ g)\big)\circ g^{-1}$, where $T_d^{(0)}F(z)=\sum_{\nu\in\Gamma_0(2)\backslash M^{(0)}_d}F(\nu z)$ is the Hecke operator of $\Gamma_0(2)$.
\item $T^G_d$ commutes with $R$, and $T^G_dF(w)=\sum_{\mu\in M_d^G}K(d(z_0,\mu w))$ if $F(w)=\sum_{\gamma\in G}K(d(\gamma z_0,w))$.
\end{enumerate}
\end{lemma}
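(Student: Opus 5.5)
I will prove (i) first, since (ii)--(iv) follow from it by short formal arguments. The plan is to treat $G=\Gamma$ by the classical Hermite normal form and to transport everything to $\Gt$ through Lemma~\ref{conj}.

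\emph{Step 1: (i) for $\Gamma$.} For $G=\Gamma$, (i) is the classical statement that every integral matrix of determinant $d$ is left-equivalent under $\mathrm{SL}_2(\Z)$ to a unique upper triangular $(\begin{smallmatrix}a&b\\0&d'\end{smallmatrix})$ with $ad'=d$, $a,d'>0$ and $b\bmod d'$. This is Hermite normal form, obtained by row operations. That $\Gamma M_d\Gamma=M_d$ is trivial.

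\emph{Step 2: (i) for $\Gt$.} Let $d$ be odd. First, $M^\vartheta_d$ is stable under left and right multiplication by $\Gt$: modulo $2$, both sets reduce to the subgroup $\{I,(\begin{smallmatrix}0&1\\1&0\end{smallmatrix})\}$ of $\mathrm{GL}_2(\Z/2)$, which is closed under products. This gives $\Gt M^\vartheta_d\Gt=M^\vartheta_d$. Next, every $(\begin{smallmatrix}a&b\\0&d'\end{smallmatrix})$ with $ad'=d$ odd and $b$ even is $\equiv I\bmod 2$, so all the listed matrices lie in $M^\vartheta_d$.

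For completeness I count. By Step 1, $\Gamma\backslash M_d$ has $\sigma(d)$ classes. Every $\mu\in M_d$ has odd determinant, so $\mu\bmod2\in\mathrm{GL}_2(\Z/2)\cong S_3$. Left multiplication by the three cosets of $\Gt$ in $\Gamma$ permutes the three cosets of the order-two subgroup, so each $\Gamma$-orbit in $M_d$ meets $M^\vartheta_d$ in exactly one $\Gt$-orbit. Hence $|\Gt\backslash M^\vartheta_d|=\sigma(d)$, which equals the number of listed matrices (one even $b$ per residue class modulo $d'$, since $d'$ is odd).

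For distinctness: two listed matrices equivalent under $\Gt$ are equivalent under $\Gamma$. Their $b$'s are even representatives of the same class modulo $d'$ in $\{0,2,\dots,2d'-2\}$, hence equal. So the listed matrices form a system of representatives.

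\emph{Step 3: (ii) and (iii).} For (ii), $T^G_dF(z)=\sum_{\mu\in G\backslash M^G_d}F(\mu z)$ by (i). For $\gamma\in G$, right multiplication by $\gamma$ permutes $G\backslash M^G_d$, because $M^G_dG=M^G_d$; this gives $G$-invariance. Taking $F=1$ gives $\sigma(d)$. For $\Gamma$-invariant $F$, the bijection $\Gt\backslash M^\vartheta_d\to\Gamma\backslash M_d$ of Step 2 gives $T^\vartheta_dF=T_dF$.

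For (iii), Lemma~\ref{conj} shows that $\nu\mapsto g\nu g^{-1}$ maps $\Gamma_0(2)\backslash M^{(0)}_d$ bijectively onto $\Gt\backslash M^\vartheta_d$. Then
\[
\sum_\nu (F\circ g)(\nu g^{-1}z)=\sum_\nu F\big((g\nu g^{-1})z\big)=T^\vartheta_dF(z).
\]

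\emph{Step 4: (iv).} Let $R$ act as $\mu\mapsto\epsilon\mu\epsilon$ with $\epsilon=\mathrm{diag}(1,-1)$; this map preserves $M^G_d$, since it only changes signs of entries. Replacing $b$ by $-b$ permutes $B_G(d')$ modulo the appropriate translations, so $T^G_d$ commutes with $R$.

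For the kernel identity,
\[
T^G_dF(w)=\sum_{\mu\in G\backslash M^G_d}\sum_{\gamma\in G}K\big(d(\gamma z_0,\mu w)\big).
\]
Since $\mu$ acts by an isometry after scaling to determinant $1$, this equals
\[
\sum_{\mu}\sum_{\gamma}K\big(d(z_0,\gamma^{-1}\mu w)\big)=\sum_{\mu'\in M^G_d}K\big(d(z_0,\mu'w)\big),
\]
because $(\gamma,\mu)\mapsto\gamma^{-1}\mu$ is a bijection $G\times(G\backslash M^G_d)\to M^G_d$ by (i). Matrices are taken modulo $\pm1$ throughout, which matches $\PSL$.

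The main obstacle is Step 2, namely the representative system for the theta group. The point to check carefully is that $B_\vartheta(d')$, consisting of the even residues modulo $2d'$, gives exactly one representative per $\Gt$-orbit. I handle this by the counting argument through $\mathrm{GL}_2(\Z/2)$ rather than by a direct normal-form reduction inside $\Gt$.
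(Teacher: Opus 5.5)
Your proposal is correct and follows essentially the same route as the paper. In (i) for $\Gt$ you use the same key observation, that $\Gamma\mu\cap M^\vartheta_d$ is a single $\Gt$-coset via the order-two subgroup of $\mathrm{GL}_2(\Z/2)$, and (ii)--(iv) are argued as in the paper. The only difference is that the paper exhibits the representative explicitly ($\mu$ itself if $b$ is even, $T^{-1}\mu$ with $b-d'$ even if $b$ is odd), while you conclude by counting $\sigma(d)$ orbits and checking that the listed matrices are pairwise inequivalent; both arguments are complete.
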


\begin{proof}
\emph{(i), $G=\Gamma$.} This is classical \cite[\S8.5, (8.30)]{Iw}.

\emph{(i), $G=\Gt$.} For $\mu\in M_d$ ($d$ odd) we have $\gamma\mu\in M^\vartheta_d$ if and only if $\gamma\equiv\nu\bar\mu^{-1}\bmod 2$ with $\nu\in\{I,(\begin{smallmatrix}0&1\\1&0\end{smallmatrix})\}$. Hence $\Gamma\mu\cap M^\vartheta_d$ is a single $\Gt$-coset $\Gt\gamma_\mu\mu$. For $\mu=(\begin{smallmatrix}a&b\\0&d'\end{smallmatrix})$ we have $\mu\equiv T^b$, and we may take $\gamma_\mu=I$ if $b$ is even and $\gamma_\mu=T^{-1}$ if $b$ is odd. Then $T^{-1}\mu=(\begin{smallmatrix}a&b-d'\\0&d'\end{smallmatrix})$ with $b-d'$ even. Thus each residue class modulo $d'$ is represented exactly once by an even $b$ modulo $2d'$. Finally $GM_d^GG=M_d^G$ because $\{I,(\begin{smallmatrix}0&1\\1&0\end{smallmatrix})\}$ is a group.

\emph{(ii).} By (i), $T^G_dF(z)=\sum_{\mu\in G\backslash M_d^G}F(\mu z)$, and right multiplication by $\gamma\in G$ permutes $G\backslash M_d^G$. There are $\sigma(d)$ cosets. For $\Gamma$-invariant $F$, the sum over $b\in B_\vartheta(d')$ equals the sum over $b\bmod d'$, because $F$ has period $1$.

\emph{(iii).} By Lemma~\ref{conj}, $\nu\mapsto g\nu g^{-1}$ maps $\Gamma_0(2)\backslash M^{(0)}_d$ bijectively onto $\Gt\backslash M^\vartheta_d$, and $F(g\nu z)=F((g\nu g^{-1})gz)$.

\emph{(iv).} $R\mu R^{-1}=(\begin{smallmatrix}a&-b\\0&d'\end{smallmatrix})$, and $-b$ runs over $B_G(d')$ modulo $d'$ (resp.\ $2d'$) as $b$ does. The formula follows from the bijection $G\times(G\backslash M^G_d)\to M^G_d$, $(\gamma,\mu)\mapsto\mu\gamma$, and $d(\gamma z_0,\mu w)=d(z_0,\gamma^{-1}\mu w)$.
\end{proof}

\begin{fact}[Cusp forms and Hecke eigenvalues]\label{F:hecke}
The space of Maass cusp forms on $G$ has an orthonormal basis $\{u_j\}$ of real-valued functions with the following properties:
\begin{itemize}
\item $\Delta u_j=(\frac14+t_j^2)u_j$, where $t_j>0$ or $t_j\in i(0,\frac12)$;
\item $u_j\circ R=\pm u_j$;
\item $T^G_du_j=\sqrt d\lambda_j(d)u_j$ for all $d$ coprime to $w_G$, with $\lambda_j(d)$ real;
\item $|\lambda_j(d)|\le\tau(d)d^{\theta}$.
\end{itemize}
\emph{References.} For $\Gamma$ and for $\Gamma_0(2)$ (with $d$ odd), the $T_d$ are self-adjoint, commute with $\Delta$ and with each other, and satisfy the Hecke relations \cite[\S8.5, (8.31)--(8.32)]{Iw}; they commute with $R$ by Lemma~\ref{hecke}(iv). Lemma~\ref{hecke}(iii) transfers this to $\Gt$, since $F\mapsto F\circ g$ is unitary from $L^2(\Gt\backslash\Hh)$ to $L^2(\Gamma_0(2)\backslash\Hh)$. For a prime $p\nmid w_G$, $\lambda_j(p)$ is the normalised Hecke eigenvalue at $p$ of a cuspidal automorphic representation of $\mathrm{GL}_2(\mathbb A_\Q)$ unramified at $p$. Hence $|\lambda_j(p)|\le p^\theta+p^{-\theta}$ by Kim--Sarnak \cite[Appendix~2]{Kim}. The Hecke relations then give $|\lambda_j(p^k)|\le(k+1)p^{k\theta}$.
\end{fact}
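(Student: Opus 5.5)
The plan is to treat $G=\Gamma$ and $G=\Gamma_0(2)$ (odd $d$) first, and then to transfer everything to $\Gt$ by the conjugation $F\mapsto F\circ g$ of Lemma~\ref{conj} and Lemma~\ref{hecke}(iii).

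\emph{Step 1: the commuting family.} For $G=\Gamma$, resp.\ $\Gamma_0(2)$, I would quote from \cite[\S8.5]{Iw} that:
\begin{itemize}
\item the Hecke operators $T_d$ (with $(d,w_G)=1$, resp.\ $d$ odd) are bounded and self-adjoint on $L^2(G\backslash\Hh)$;
\item they preserve the cuspidal subspace and commute with $\Delta$ and with each other;
\item they satisfy $T_mT_n=\sum_{c\mid(m,n)}c\,T_{mn/c^2}$.
\end{itemize}
By Lemma~\ref{hecke}(iv) they also commute with the involution $R$, which itself commutes with $\Delta$ and preserves cusp forms.

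\emph{Step 2: the basis.} The cuspidal spectrum of $\Delta$ is discrete with finite multiplicities, and each eigenvalue is positive. This gives $\frac14+t_j^2>0$, so $t_j>0$ or $t_j\in i(0,\frac12)$; the value $t_j=0$ is excluded, either by the standard convention or by the known absence of exceptional or critical eigenvalues in level $1$ and $2$. Each eigenspace is finite-dimensional and invariant under the commuting self-adjoint family $\{T_d\}\cup\{R\}$, so it has an orthonormal basis of joint eigenvectors, with real eigenvalues.

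These operators have real kernels, so they commute with complex conjugation. Hence the real subspace of each joint eigenspace spans it, and real-valued orthonormal bases can be chosen. The eigenvalues of $T_d$ are then written $\sqrt d\lambda_j(d)$, and those of $R$ are $\pm1$.

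\emph{Step 3: transfer to $\Gt$.} Since $F\mapsto F\circ g$ is a unitary isomorphism $L^2(\Gt\backslash\Hh)\to L^2(\Gamma_0(2)\backslash\Hh)$, Lemma~\ref{hecke}(iii) shows that it intertwines $T^\vartheta_d$ with $T^{(0)}_d$. It also intertwines $\Delta$, since $g$ is an isometry, and it maps cusp forms to cusp forms.

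The symmetry $R$ must be handled separately, because $g$ does not commute with $R$. Instead I would diagonalise directly on $\Gt$:
\begin{itemize}
\item $R$ normalises $\Gt$;
\item $T^\vartheta_d$ commutes with $R$ by Lemma~\ref{hecke}(iv);
\item self-adjointness and the commutation relations transfer through the unitary map.
\end{itemize}
Step 2 then applies verbatim on $\Gt$.

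\emph{Step 4: the bound.} I would view $\lambda_j(p)$ for $p\nmid w_G$ as the Satake parameter sum $\alpha_p+\beta_p$, with $\alpha_p\beta_p=1$, of the attached cuspidal representation. Kim--Sarnak \cite[Appendix~2]{Kim} gives $|\lambda_j(p)|\le p^\theta+p^{-\theta}$. Since $\lambda_j(p)$ is real, there are two cases:
\begin{itemize}
\item if $|\lambda_j(p)|\le2$, then $|\alpha_p|=|\beta_p|=1$;
\item otherwise $\alpha_p,\beta_p$ are real and reciprocal, with $\max(|\alpha_p|,|\beta_p|)\le p^\theta$, because $x\mapsto x+x^{-1}$ is increasing on $[1,\infty)$.
\end{itemize}

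The Hecke relation $\lambda_j(p)\lambda_j(p^k)=\lambda_j(p^{k+1})+\lambda_j(p^{k-1})$ then gives $\lambda_j(p^k)=\sum_{i=0}^k\alpha_p^i\beta_p^{k-i}$. Hence $|\lambda_j(p^k)|\le(k+1)p^{k\theta}$. Multiplicativity of $\lambda_j$ on coprime arguments, from the same Hecke relations, yields $|\lambda_j(d)|\le\tau(d)d^\theta$.

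The main obstacle is the identification in Step 4 of $\lambda_j(p)$, for forms on $\Gt$, with an unramified local parameter. Through $g$ this reduces to Hecke--Maass forms on $\Gamma_0(2)$. Oldforms there come from level $1$ and have the same eigenvalues at odd $p$, and newforms generate cuspidal representations unramified outside $2$. So I would first split the $\Gamma_0(2)$ cusp space into old and new parts and apply Kim--Sarnak to each.
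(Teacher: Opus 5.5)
Your proposal is correct and follows essentially the same route as the paper. You take the commuting self-adjoint Hecke family on $\Gamma$ and $\Gamma_0(2)$ from \cite{Iw}, get $R$-commutation on $\Gt$ directly from Lemma~\ref{hecke}(iv), transfer the rest to $\Gt$ through the unitary map $F\mapsto F\circ g$ and Lemma~\ref{hecke}(iii), and finish with Kim--Sarnak at unramified primes plus the Hecke relations. Your additional details make explicit what the paper leaves implicit: the real-valued basis via conjugation-invariance of the joint eigenspaces, the Satake case split giving $\max(|\alpha_p|,|\beta_p|)\le p^\theta$, and the old/new splitting on $\Gamma_0(2)$ that justifies the unramified identification.
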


\subsection{Eisenstein series}
\begin{fact}[Level one; {\cite[\S3.4, (3.20)--(3.29), and (8.33)]{Iw}}]\label{F:E}
$E(z,s)$ has the Fourier expansion
\[
E(z,s)=y^s+\varphi(s)y^{1-s}+\frac{2\pi^s\sqrt y}{\Gamma(s)\zeta(2s)}\sum_{m\ne0}|m|^{s-\frac12}\sigma_{1-2s}(|m|)K_{s-\frac12}(2\pi|m|y)e(mx).
\]
It satisfies $E(z,s)=\varphi(s)E(z,1-s)$, is holomorphic in $\Re s\ge\frac12$ except for a simple pole at $s=1$ with residue $3/\pi$, and $T_dE(\cdot,s)=D_d(s)E(\cdot,s)$, $\overline{E(z,s)}=E(z,\bar s)$. At $z=i$,
\[
E(i,s)=\frac12\sum_{(c,d)=1}(c^2+d^2)^{-s}=\frac{2\zeta(s)L(s,\chi_{-4})}{\zeta(2s)},
\]
since $\sum_{(m,n)\ne(0,0)}(m^2+n^2)^{-s}=4\zeta(s)L(s,\chi_{-4})$. Hence, by the convexity bounds for $\zeta$ and $L(s,\chi_{-4})$ and Fact~\ref{F:zeta}, $E(i,s)$ has polynomial growth in $|\Im s|$ for $\frac12\le\Re s\le2$, $|s-1|\ge\frac14$.
\end{fact}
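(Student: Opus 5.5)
The Fourier expansion, the functional equation, the pole at $s=1$ with residue $3/\pi$, and the relation $\overline{E(z,s)}=E(z,\bar s)$ are standard. I would cite them from \cite[\S3.4]{Iw} and not reprove them. The reality relation is immediate from the series in $\Re s>1$ and extends by analytic continuation. The Hecke relation $T_dE(\cdot,s)=D_d(s)E(\cdot,s)$ I would check directly for $\Re s>1$, as follows. By Lemma~\ref{hecke}(i), $T_dE(z,s)=\sum_{\mu\in\Gamma\backslash M_d}E(\mu z,s)$. Unfolding the definition of $E$ turns this into a sum of $\Im(\nu z)^s$ over $\nu\in\Gamma_\infty\backslash M_d$. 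Each such class has an upper triangular representative in $\Gamma_\infty\backslash M_d$, and grouping by the bottom row $(c,d')$ gives the multiplicative factor. Concretely, at $z$ with $\Im(\gamma z)$ fixed, the classes contribute $\sum_{ad'=d}\sum_{b\bmod d'}(a/d')^s$ times the Eisenstein sum. Since $\sum_{b\bmod d'}(a/d')^s=d'(a/d')^s=a^s d'^{1-s}$, the factor is $D_d(s)$. This identity then continues meromorphically to all $s$.

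For the value at $i$, I would write $\Gamma_\infty\backslash\PSL$ as the set of coprime pairs $(c,d)$ modulo $\pm1$. Since $\Im(\gamma i)=1/(c^2+d^2)$, this gives $E(i,s)=\frac12\sum_{(c,d)=1}(c^2+d^2)^{-s}$. Möbius inversion over the gcd gives $\sum_{(m,n)\neq0}(m^2+n^2)^{-s}=\zeta(2s)\sum_{(c,d)=1}(c^2+d^2)^{-s}$. The left side is $4\sum_{k\ge1}r_2(k)k^{-s}/4\cdot 4$, more precisely $\sum_k r_2(k)k^{-s}$ with $r_2(k)=4\sum_{d\mid k}\chi_{-4}(d)$, and this equals $4\zeta(s)L(s,\chi_{-4})$. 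Dividing by $\zeta(2s)$ and halving gives $2\zeta(s)L(s,\chi_{-4})/\zeta(2s)$, first for $\Re s>1$ and then everywhere by continuation.

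For polynomial growth in the strip $\frac12\le\Re s\le2$ with $|s-1|\ge\frac14$, the numerator is handled by the Phragmén--Lindelöf convexity bounds $\zeta(s),L(s,\chi_{-4})\ll(1+|t|)^{1/4+\varepsilon}$ near $\Re s=\frac12$, together with boundedness for $\Re s\ge1+\varepsilon$. The excluded disc removes the pole of $\zeta$.

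The main obstacle is the denominator, since $1/\zeta(2s)$ must be bounded polynomially on $1\le\Re(2s)\le4$. For $\Re(2s)\ge1+\varepsilon$ it is bounded. On the edge $\Re(2s)=1$ one needs the classical bound $1/\zeta(1+it)\ll\log(2+|t|)$, and more generally the zero-free-region estimate $1/\zeta(\sigma+it)\ll\log(2+|t|)$ for $\sigma\ge1-c/\log(2+|t|)$. This is presumably the Fact~\ref{F:zeta} referred to in the statement, and I would invoke it there. One small point concerns $\Re s=\frac12$ near $t=0$, where $\zeta(2s)$ has a pole at $2s=1$. That pole only helps, since $1/\zeta(2s)$ vanishes there. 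Combining the bounds gives $E(i,s)\ll(1+|t|)^{1/2+\varepsilon}$ uniformly in the region.
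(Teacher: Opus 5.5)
Your proposal is correct and follows the paper: the standard properties are cited from \cite[\S3.4]{Iw}, and the value at $i$ and its polynomial growth are obtained as you do, via coprime bottom rows, $r_2(k)=4\sum_{d\mid k}\chi_{-4}(d)$, convexity for the numerator, and $|\zeta(\sigma+it)|^{-1}\ll\log(2+|t|)$ for $\sigma\ge1$ (plus continuity of $1/\zeta(2s)$ near $2s=1$) for the denominator. Your direct check of the Hecke eigenvalue $D_d(s)$ is a harmless extra, since the paper simply cites \cite[(8.33)]{Iw}; but a class in $\Gamma_\infty\backslash M_d$ with non-zero lower-left entry has no upper triangular representative, so the decomposition you actually use should be stated as $\nu=\tau\gamma$ with $\tau$ upper triangular and $\gamma\in\Gamma$, which yields the $d'$ classes per coprime bottom row that you count.
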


For $\Gt$, the Eisenstein series of the cusps $\infty$ and $1$ are
\[
E^\vartheta_\infty(z,s)=\sum_{\gamma\in\langle T^2\rangle\backslash\Gt}\Im(\sigma_\infty^{-1}\gamma z)^s,\qquad E_1^\vartheta(z,s)=\sum_{\gamma\in\Gamma_1\backslash\Gt}\Im(\sigma_1^{-1}\gamma z)^s .
\]

\begin{lemma}\label{lem:Etheta}
With $E_2(z,s):=E(\frac{z+1}2,s)$,
\[
E^\vartheta_\infty=\frac{2^{-s}E-2^{-2s}E_2}{1-2^{-2s}},\qquad E^\vartheta_1=\frac{2^{-s}E_2-2^{-2s}E}{1-2^{-2s}} .
\]
Consequently:
\begin{enumerate}[label=(\roman*)]
\item each $E^\vartheta_{\mathfrak a}$ is holomorphic in $\Re s>\frac12$ except for a simple pole at $s=1$ with residue $1/\pi$, so the residual spectrum of $\Gt$ consists of the constants;
\item $E^\vartheta_\infty(i,s)=E_1^\vartheta(i,s)=E(i,s)/(2^s+1)$;
\item $E^\vartheta_\infty+E^\vartheta_1=e_\vartheta/(2^s+1)$, where $e_\vartheta(z,s):=E(z,s)+E_2(z,s)$;
\item $e_\vartheta$ is $\Gt$- and $S$-invariant, and $T^\vartheta_de_\vartheta=D_d(s)e_\vartheta$ for odd $d$.
\end{enumerate}
\end{lemma}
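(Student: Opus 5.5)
We prove the two identities by expressing $E$ and $E_2$ in terms of $E^\vartheta_\infty$ and $E^\vartheta_1$ and inverting the resulting $2\times2$ system. Both $E$ and $E_2$ are $\Gt$-invariant: $E$ because $\Gt\subset\Gamma$, and $E_2$ because the map $z\mapsto\frac{z+1}2=g^{-1}z$ conjugates $\Gt$ to $\Gamma_0(2)\subset\Gamma$ (Lemma~\ref{conj}). We then decompose the sum over $\Gamma_\infty\backslash\Gamma$ according to $\Gt$-orbits of cusps.

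\emph{Decomposition of $E$.} The cosets $\Gamma_\infty\gamma$ correspond to coprime pairs $(c,d)$ up to sign, i.e.\ to cusps $\gamma^{-1}\infty=-d/c$. Such a pair lies in the $\Gt$-orbit of $\infty$ if $c\not\equiv d\bmod 2$, and in the orbit of $1$ if $c\equiv d\equiv1\bmod2$. For the cusp $\infty$, the stabiliser $\langle T^2\rangle$ has index $2$ in $\Gamma_\infty$, and $\Im(\sigma_\infty^{-1}\gamma z)=\Im(\gamma z)/2$. Since $T$ normalises nothing extra here, the sum $\sum_{\langle T^2\rangle\backslash\Gt}\Im(\gamma z)^s$ runs exactly over the pairs with $c\not\equiv d$, once each. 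This gives $\sum_{c\not\equiv d}=2^sE^\vartheta_\infty$. For the cusp $1$, with $\sigma_1^{-1}\gamma$, the pairs with $c\equiv d\equiv1$ correspond bijectively to $\Gamma_1\backslash\Gt$, giving that part as $E^\vartheta_1$. Hence
\[
E=2^sE^\vartheta_\infty+E^\vartheta_1 .
\]

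\emph{Decomposition of $E_2$.} $\Im\frac{\gamma z+1}2$ corresponds to pairs arising from $\gamma'=\left(\begin{smallmatrix}1&1\\0&2\end{smallmatrix}\right)\gamma$. Expressed at $i$, or via the lattice sum $\Im(w)^s=y^s/|cz+d|^{2s}$, the terms of $E(\frac{z+1}2,s)$ are $2^s y^s/|c(z+1)+2d|^{2s}$ with $(c,2d)$ coprime data. We sort these by the parity of the resulting bottom row, mirroring the roles of the two cusps. We expect
\[
E_2=E^\vartheta_\infty+2^sE^\vartheta_1,
\]
consistent with the symmetry $g$ exchanging the roles of the two cusps. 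Solving the system gives the stated formulas, since the determinant is $2^{2s}-1$.

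\emph{Bookkeeping.} The main obstacle is getting the powers of $2$ and the multiplicities right in these two decompositions. We will check them by evaluating constant terms: $E^\vartheta_\infty$ must have constant term $y^s+\ldots$ at $\infty$ in the scaled variable. This check fixes the coefficients.

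\emph{(i)} Each $E^\vartheta_{\mathfrak a}$ is a combination of $E$ and $E_2$, both holomorphic in $\Re s>\frac12$ apart from a simple pole at $1$ with residue $3/\pi$. The residue of each $E^\vartheta_{\mathfrak a}$ is therefore $\frac{3}{\pi}\cdot\frac{2^{-1}-2^{-2}}{1-2^{-2}}=\frac1\pi=1/\vol_\vartheta$. Since there are no other poles, the residual spectrum consists of the constants.

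\emph{(ii)} We have $E_2(i,s)=E(\frac{i+1}2,s)$. Since $\frac{i+1}2=\frac{-1}{i-1}$, which is $\Gamma$-equivalent to $i$, it follows that $E_2(i,s)=E(i,s)$. Substituting gives $\frac{2^{-s}-2^{-2s}}{1-2^{-2s}}E(i,s)=\frac{E(i,s)}{2^s+1}$.

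\emph{(iii)} Adding the two formulas gives $\frac{2^{-s}-2^{-2s}}{1-2^{-2s}}(E+E_2)$, which equals $e_\vartheta/(2^s+1)$.

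\emph{(iv)} The function $e_\vartheta$ is $\Gt$-invariant by (iii). For $S$-invariance, $E$ is $S$-invariant. Moreover $E_2(Sz)=E(\frac{z-1}{2z},s)$, and $\frac{z-1}{2z}=\left(\begin{smallmatrix}1&-1\\2&0\end{smallmatrix}\right)z$, which equals $\gamma\frac{z+1}2$ with $\gamma=\left(\begin{smallmatrix}1&-1\\2&-1\end{smallmatrix}\right)\in\Gamma$. Hence $E_2\circ S=E_2$. For the Hecke property with $d$ odd, Lemma~\ref{hecke}(ii) gives $T^\vartheta_dE=T_dE=D_d(s)E$. Lemma~\ref{hecke}(iii) gives $T^\vartheta_dE_2=(T^{(0)}_d(E_2\circ g))\circ g^{-1}$, where $E_2\circ g=E$. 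The operator $T^{(0)}_d$ acting on the level-one function $E$ agrees with $T_d$ for odd $d$, because $\Gamma_0(2)\backslash M^{(0)}_d\to\Gamma\backslash M_d$ is a bijection when $d$ is odd. Therefore $T^\vartheta_dE_2=D_d(s)E_2$, and consequently $T^\vartheta_de_\vartheta=D_d(s)e_\vartheta$.
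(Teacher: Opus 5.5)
Your strategy is to write $E$ and $E_2$ in terms of $E^\vartheta_\infty,E^\vartheta_1$ and invert a $2\times2$ system. That is a legitimate alternative to the paper. The paper works instead with the full lattice sum $G=2\zeta(2s)E$: pairs with $m\equiv n\bmod 2$ give $2^{-s}G(\frac{z+1}2,s)$, pairs with $m\equiv n\equiv0$ give $2^{-2s}G$, and for parity classes avoiding $(0,0)$ one passes to primitive pairs by dividing by $2(1-2^{-2s})\zeta(2s)$.

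Your relation $E=2^sE^\vartheta_\infty+E^\vartheta_1$ is correct. The second relation, $E_2=E^\vartheta_\infty+2^sE^\vartheta_1$, is also true, but you do not prove it: you only say you ``expect'' it, and neither justification works.
\begin{itemize}
\item The map $g$ conjugates $\Gamma_0(2)$ onto $\Gt$. It does not normalise $\Gt$, so it does not exchange the cusps of $\Gt$. The element that does is $\omega=gW_2g^{-1}$, $\omega z=\frac{z-3}{z-1}$, with $W_2=(\begin{smallmatrix}0&-1\\2&0\end{smallmatrix})$ the Fricke involution. One can check $E^\vartheta_\infty\circ\omega=E^\vartheta_1$, $E^\vartheta_1\circ\omega=E^\vartheta_\infty$ and $E\circ\omega=E_2$, which would give the identity.
\item Comparing constant terms fixes $\alpha,\beta$ in $E_2=\alpha E^\vartheta_\infty+\beta E^\vartheta_1$ only once you know such a relation holds, and that is exactly the point at issue.
\end{itemize}
This equation is half of the linear system that produces the first display, so the main identities are not yet proved.

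The gap closes if you actually carry out the sorting you describe. Since $\Im\frac{z+1}2=\frac y2$ and $c\frac{z+1}2+d=\frac12(cz+c+2d)$, you get $E_2(z,s)=\frac12\sum_{(c,d)=1}2^sy^s|cz+c+2d|^{-2s}$.
\begin{itemize}
\item For odd $c$, the map $(c,d)\mapsto(c,c+2d)$ is a bijection onto coprime pairs with both entries odd (inverse $d=\frac{n-m}2$). These terms give $2^sE^\vartheta_1$.
\item For even $c$, $d$ is odd and $(c,c+2d)=2(m',n')$ with $(m',n')=(\frac c2,\frac c2+d)$ coprime and $m'\not\equiv n'\bmod 2$. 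This is again a bijection (inverse $c=2m'$, $d=n'-m'$). Each term becomes $2^{s}2^{-2s}y^s|m'z+n'|^{-2s}$, so these give $2^{-s}\cdot\frac12\sum_{(m',n')=1,\ m'\not\equiv n'}y^s|m'z+n'|^{-2s}=E^\vartheta_\infty$.
\end{itemize}
With this in place, (i)--(iv) are fine as written. Your Hecke argument via Lemma~\ref{hecke}(iii), $E_2\circ g=E$, and the bijection $\Gamma_0(2)\backslash M^{(0)}_d\to\Gamma\backslash M_d$ for odd $d$ is a valid substitute for the paper's direct change of variables.

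There are also some minor slips.
\begin{itemize}
\item $g^{-1}z=\frac{z-1}2$, not $\frac{z+1}2$; this is harmless since $E$ has period $1$.
\item The terms of $E_2$ are $\Im\bigl(\gamma\frac{z+1}2\bigr)^s$, not $\Im\bigl(\frac{\gamma z+1}2\bigr)^s$.
\item ``$T$ normalises nothing extra'' should be replaced by the actual reason. Two elements of $\Gt$ with the same bottom row differ by some $T^k\in\Gt$, which forces $k$ even. For a bottom row of mixed parity, exactly one of $\gamma,T\gamma$ lies in $\Gt$.
\end{itemize}
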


\begin{proof}
\emph{Reduction to lattice sums.} For a set $P$ of parity classes of pairs, let
\[
E_P(z,s)=\frac12\sum_{\substack{(c,d)=1\\ (c,d)\bmod2\in P}}y^s|cz+d|^{-2s}.
\]
\begin{itemize}
\item By the description of the cusps, $\langle T^2\rangle\backslash\Gt$ corresponds to the coprime bottom rows $\pm(c,d)$ with $c\not\equiv d\bmod 2$, each exactly once: in a coset $\langle T\rangle\gamma$ the elements $\gamma,T\gamma$ have different classes modulo $2$. Hence $E^\vartheta_\infty=2^{-s}E_{\{(0,1),(1,0)\}}$, the factor $2^{-s}$ coming from $\Im(\sigma_\infty^{-1}w)=\Im w/2$.
\item Likewise $\sigma_1^{-1}\Gt$ consists of the matrices with bottom row $\equiv(1,1)$, and whole $\langle T\rangle$-cosets occur. Hence $E^\vartheta_1=E_{\{(1,1)\}}$.
\end{itemize}

\emph{Evaluation.} Let $G(z,s)=\sum'_{(m,n)}y^s|mz+n|^{-2s}=2\zeta(2s)E(z,s)$.
\begin{itemize}
\item The pairs with $m\equiv n\equiv0$ contribute $2^{-2s}G(z,s)$.
\item Writing $mz+n=2\big(m\frac{z+1}2+\frac{n-m}2\big)$, the pairs with $m\equiv n\bmod 2$ contribute $2^{-s}G(\frac{z+1}2,s)$.
\item A non-zero pair in a class $\ne(0,0)$ has odd $\gcd k$, and the primitive pair has the same class. Hence for $P\not\ni(0,0)$ the restricted sum equals $2(1-2^{-2s})\zeta(2s)E_P$.
\end{itemize}
This gives the two formulas.

\emph{(i).} Use Fact~\ref{F:E}; $1-2^{-2s}$ has no zeros in $\Re s>0$. The residues are $\frac3\pi\cdot\frac{(1/2)(1/2)}{3/4}=\frac1\pi$ and $\frac3\pi\cdot\frac{1/4}{3/4}=\frac1\pi$. The residual spectrum is spanned by the residues of the Eisenstein series at their poles in $(\frac12,1]$ \cite[\S7.3]{Iw}.

\emph{(ii).} $\frac{i+1}2=T(\begin{smallmatrix}1&0\\-1&1\end{smallmatrix})i$, so $E_2(i,s)=E(i,s)$, and $\frac{2^{-s}-2^{-2s}}{1-2^{-2s}}=\frac1{2^s+1}$.

\emph{(iii).} This follows by adding the two formulas.

\emph{(iv), invariance.} $e_\vartheta$ is $\Gt$-invariant by (iii). $E$ is $\Gamma$-invariant. $E_2(Sz)=E(\nu z)$ with $\nu=(\begin{smallmatrix}1&-1\\2&0\end{smallmatrix})=(\begin{smallmatrix}1&-1\\2&-1\end{smallmatrix})(\begin{smallmatrix}1&1\\0&2\end{smallmatrix})$, so $E_2(Sz)=E_2(z)$.

\emph{(iv), Hecke.} $T_d^\vartheta E=D_dE$ by Lemma~\ref{hecke}(ii) and Fact~\ref{F:E}. For $E_2$, with $w=\frac{z+1}2$:
\[
\frac{\frac{az+b}{d'}+1}2=\frac{aw+(b+d'-a)/2}{d'},
\]
and $(b+d'-a)/2$ runs over the residues modulo $d'$ as $b$ runs over $B_\vartheta(d')$. Hence $T^\vartheta_dE_2=(T_dE)(w)=D_dE_2$.
\end{proof}

For both groups we now collect the Eisenstein data needed at $z=i$. Put
\[
e_\Gamma=E,\quad c_\Gamma(s)=1,\quad E_\Gamma(i,s)=E(i,s);\qquad e_\vartheta=E+E_2,\quad c_\vartheta(s)=|2^s+1|^{-2},\quad E_\vartheta(i,s)=\frac{E(i,s)}{2^s+1}.
\]

\begin{lemma}\label{lem:Edata}
Let $\Re s=\frac12$, $s\ne\frac12$, and $s'=1-s=\bar s$.
\begin{enumerate}[label=(\roman*)]
\item $\sum_{\mathfrak a}E^G_{\mathfrak a}(i,s)\overline{E^G_{\mathfrak a}(w,s)}=c_G(s)E(i,s)e_G(w,s')$.
\item $e_G(iy,s')=a_G(s')y^{s'}+b_G(s')y^{1-s'}+e^\sharp_G(y,s')$, where $e^\sharp_G(y,s')$ is the sum of the non-constant Fourier terms. Here $a_\Gamma=1$, $b_\Gamma=\varphi$, $a_\vartheta(s')=1+2^{-s'}$ and $b_\vartheta(s')=\varphi(s')(1+2^{s'-1})$.
\item $c_G(s)E(i,s)a_G(s')=E_G(i,s)\,w_G^{-s'}$ and $c_G(s)E(i,s)b_G(s')=E_G(i,s')\,w_G^{-s}$.
\item $E_G(i,1-s)$ is meromorphic in $-1\le\Re s\le\frac12$ with polynomial growth in $|\Im s|$ away from $s=0$, and
\[
E_G(i,1-s)=-\frac{1/\vol_G}{s}+E^*_G+O(s)\quad(s\to0),\qquad E_\Gamma^*=E^*(i),\quad E^*_\vartheta=\frac{E^*(i)}3-\frac{2\log2}{3\pi}.
\]
\end{enumerate}
\end{lemma}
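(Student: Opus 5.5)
The lemma is bookkeeping with Lemma~\ref{lem:Etheta} and Fact~\ref{F:E}. For $G=\Gamma$ there is only the cusp $\infty$, and $c_\Gamma=a_\Gamma=w_\Gamma=1$, $b_\Gamma=\varphi$. So (i)--(iii) reduce to $\overline{E(w,s)}=E(w,\bar s)=E(w,s')$, to the constant term in Fact~\ref{F:E}, and to the functional equation. The real work is the theta group. Throughout I use that $\Re s=\frac12$ gives $\bar s=s'$, and hence $\overline{2^s+1}=2^{s'}+1$.

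\emph{(i) and (ii).} By Lemma~\ref{lem:Etheta}(ii), $E^\vartheta_\infty(i,s)=E^\vartheta_1(i,s)=E(i,s)/(2^s+1)$. The sum over the two cusps therefore factors as
\[
\frac{E(i,s)}{2^s+1}\,\overline{\big(E^\vartheta_\infty+E^\vartheta_1\big)(w,s)}.
\]
By Lemma~\ref{lem:Etheta}(iii) the second factor is $\overline{e_\vartheta(w,s)}/(2^{s'}+1)$. Since $E$ is real on real arguments, $\overline{e_\vartheta(w,s)}=e_\vartheta(w,s')$. Collecting the denominators gives $|2^s+1|^{-2}=c_\vartheta(s)$, which is (i).

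For (ii), write $E_2(iy,s')=E(\frac12+\frac{iy}2,s')$ and insert the Fourier expansion of Fact~\ref{F:E} at height $y/2$. The $m=0$ terms are $(y/2)^{s'}+\varphi(s')(y/2)^{1-s'}$. The $m\neq0$ terms only acquire signs $e(m/2)$ and stay in $e^\sharp$. Adding the constant terms of $E(iy,s')$ gives $a_\vartheta(s')=1+2^{-s'}$ and $b_\vartheta(s')=\varphi(s')(1+2^{s'-1})$.

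\emph{(iii).} I will verify this by direct algebra. For the first identity, use $1+2^{-s'}=2^{-s'}(2^{s'}+1)$. This cancels one denominator and leaves
\[
c_\vartheta(s)E(i,s)a_\vartheta(s')=\frac{E(i,s)}{2^s+1}\,2^{-s'}=E_\vartheta(i,s)\,2^{-s'},
\]
and $2^{-s'}=w_\vartheta^{-s'}$. For the second identity, use $E(i,s)\varphi(s')=E(i,s')$, which is the functional equation $E(z,s')=\varphi(s')E(z,s)$. Also $1+2^{s'-1}=1+2^{-s}=2^{-s}(2^s+1)$. This cancels the factor $2^s+1$ and gives $E(i,s')2^{-s}/(2^{s'}+1)=E_\vartheta(i,s')\,w_\vartheta^{-s}$. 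The case $G=\Gamma$ is the same computation without the $2$-factors.

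\emph{(iv).} As $s$ ranges over $-1\le\Re s\le\frac12$, the argument $1-s$ ranges over $\frac12\le\Re(1-s)\le2$. Fact~\ref{F:E} then gives meromorphy, with only the pole of $E(i,\cdot)$ at $1-s=1$, i.e.\ at $s=0$. It also gives polynomial growth away from $s=0$. For $\Gt$ we further divide by $2^{1-s}+1$, which satisfies $|2^{1-s}|\ge\sqrt2$, so it has modulus $\ge\sqrt2-1$ and introduces no new poles.

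At $s=0$ we have $E(i,1-s)=-\frac{3/\pi}{s}+E^*(i)+O(s)$ by the definition of $E^*(i)$; this is the case $G=\Gamma$, with $\vol_\Gamma=\pi/3$. For $\Gt$, expand
\[
\frac1{2^{1-s}+1}=\frac1{3-2s\log2+O(s^2)}=\frac13\Big(1+\frac{2\log2}3s\Big)+O(s^2).
\]
Multiplying the two expansions gives the polar part $-\frac{1/\pi}{s}$, consistent with $\vol_\vartheta=\pi$, and the constant term $\frac{E^*(i)}3-\frac3\pi\cdot\frac{2\log2}9=\frac{E^*(i)}3-\frac{2\log2}{3\pi}$.

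I expect no genuine obstacle. The only points needing care are the conjugations in (i) and the cancellation of the $2$-power factors in (iii): these must be tracked with $\bar s=s'$, and the functional equation must be applied to the correct variable.
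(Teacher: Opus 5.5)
Your proposal is correct and follows the paper's proof essentially step by step. You use Lemma~\ref{lem:Etheta}(ii),(iii) together with $\overline{E(w,s)}=E(w,\bar s)$ for (i), and the constant terms at height $y/2$ for (ii). For (iii) you use the same factorisations $1+2^{-s'}=2^{-s'}(2^{s'}+1)$ and $1+2^{s'-1}=2^{-s}(2^s+1)$ with the functional equation, and for (iv) the Taylor expansion of $1/(2^{1-s}+1)$. Your explicit bound $|2^{1-s}+1|\ge\sqrt2-1$ is a slightly more careful version of the paper's remark that this factor does not vanish, and it also justifies the polynomial growth.
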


\begin{proof}
\emph{(i).} For $\Gamma$ this holds because $\overline{E(w,s)}=E(w,\bar s)$. For $\Gt$ use Lemma~\ref{lem:Etheta}(ii),(iii) and $|2^s+1|^2=(2^s+1)(2^{s'}+1)$.

\emph{(ii).} This is Fact~\ref{F:E}; for $E_2(iy,s')=E(\frac12+\frac{iy}2,s')$ the constant term is $(y/2)^{s'}+\varphi(s')(y/2)^{1-s'}$.

\emph{(iii), case $\Gamma$.} Use $E(i,s)\varphi(s')=E(i,s')$.

\emph{(iii), case $\Gt$.} Use $\frac{1+2^{-s'}}{2^{s'}+1}=2^{-s'}$ and $1+2^{s'-1}=1+2^{-s}=2^{-s}(2^s+1)$, together with $E(i,s)\varphi(s')=E(i,s')$.

\emph{(iv).} For $\Gamma$ this is Fact~\ref{F:E}. For $\Gt$, $2^{1-s}+1\ne0$ for $\Re s<1$, and
\[
\frac1{2^{1-s}+1}=\frac13+\frac{2\log2}9s+O(s^2),\qquad E(i,1-s)=-\frac{3/\pi}s+E^*(i)+O(s).\qedhere
\]
\end{proof}

\subsection{Standard facts}
\begin{fact}[Pre-trace formula; {\cite[Thms.~1.14, 1.16, 7.4]{Iw}}]\label{F:pretrace}
Let $K(z,w)=K(d(z,w))$ be smooth and compactly supported in $d$, with Selberg transform $h$. (In \cite{Iw} the kernel is a function $k(u)$ of $u=\sinh^2(d/2)$; the kernel of Lemma~\ref{kernel} is a smooth function of $\cosh d=1+2u$, so $k\in C_0^\infty(\R_{\ge0})$.)
\begin{enumerate}[label=(\alph*)]
\item For every $F$ with $\Delta F=(\frac14+t^2)F$, $\int_\Hh K(z,w)F(w)\dd\mu(w)=h(t)F(z)$.
\item The kernel $K_G(z,w)=\sum_{\gamma\in G}K(\gamma z,w)$ has the expansion below; the residual spectrum consists of the constants (for $\Gt$ by Lemma~\ref{lem:Etheta}(i)):
\[
K_G(z,w)=\sum_jh(t_j)u_j(z)u_j(w)+\frac{h(\frac i2)}{\vol_G}+\frac1{4\pi}\sum_{\mathfrak a}\int_\R h(t)E^G_{\mathfrak a}(z,\tfrac12+it)\overline{E^G_{\mathfrak a}(w,\tfrac12+it)}\dd t,
\]
which converges absolutely and uniformly on compacta.
\end{enumerate}
\end{fact}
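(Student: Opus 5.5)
Since this is a Fact quoted from \cite{Iw}, the plan is not to re-derive spectral theory but to check that the hypotheses of \cite[Thms.~1.14, 1.16, 7.4]{Iw} hold for our kernels and our two groups, and that the normalisations match. \textbf{First step:} rewrite the kernel in Iwaniec's variable. If $K$ is smooth and compactly supported as a function of $d$ and is a smooth function of $\cosh d=1+2u$, then $k(u):=K(\operatorname{arccosh}(1+2u))$ lies in $C_0^\infty(\R_{\ge0})$. Iwaniec's Selberg/Harish-Chandra transform of $k$ is then an even entire function of $t$ that decays faster than any power on horizontal strips $|\Im t|\le\frac12+\eta$. This gives the admissibility needed in \cite[Thm.~7.4]{Iw}, and in particular shows that $h(\frac i2)$ makes sense.

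\textbf{Part (a).} This is \cite[Thm.~1.14]{Iw}. I would sketch why it holds: average $F$ over the stabiliser of $z$ to obtain a radial eigenfunction $\omega(d(z,\cdot))F(z)$. Here $\omega$ is the unique radial eigenfunction with $\omega(0)=1$, namely the spherical function $P_{-1/2+it}(\cosh d)$. Integrating against the radial kernel then gives $F(z)\int_\Hh K(z,w)\omega(d(z,w))\dd\mu(w)$, and this last integral equals $h(t)$ by \cite[Thm.~1.16]{Iw}, which identifies it with the Selberg transform. No growth condition on $F$ is needed, because $K$ has compact support.

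\textbf{Part (b).} I would apply \cite[Thm.~7.4]{Iw}, the spectral decomposition of $L^2(G\backslash\Hh)$ applied to the automorphic kernel, to the finite-index subgroups $\Gamma$ and $\Gt$ separately. The work is bookkeeping.
\begin{itemize}
\item For $\Gamma$ there is one cusp, with $E^\Gamma_\infty=E$.
\item For $\Gt$ there are two cusps, $\infty$ and $1$. They have the scaling matrices $\sigma_\infty,\sigma_1$ fixed above, and the Eisenstein series $E^\vartheta_\infty,E^\vartheta_1$ are exactly those of Lemma~\ref{lem:Etheta}. This matches Iwaniec's normalisation, with $\sigma_{\mathfrak a}^{-1}\Gamma_{\mathfrak a}\sigma_{\mathfrak a}=\langle T\rangle$; that identity has to be checked, and it is the reason $\sigma_\infty z=2z$.
\item The residual spectrum consists of residues of Eisenstein series at poles in $(\frac12,1]$ \cite[\S7.3]{Iw}. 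By Lemma~\ref{lem:Etheta}(i) the only such pole is at $s=1$, so the residual spectrum is the constants. The constant eigenfunction normalised in $L^2$ is $\vol_G^{-1/2}$, and its spectral parameter is $t=\frac i2$; this produces the term $h(\frac i2)/\vol_G$.
\item The factor $\frac1{4\pi}$ is Iwaniec's normalisation of the continuous spectrum, taken over $t\in\R$ rather than $t\ge0$.
\item Exceptional eigenvalues $t_j\in i(0,\frac12)$ are already included in $\sum_j$.
\end{itemize}

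\textbf{Expected obstacle: pointwise absolute and uniform convergence.} The $L^2$ expansion has to be upgraded to a pointwise identity. I would follow Iwaniec's argument.
\begin{enumerate}
\item Since $k\in C_0^\infty$, the transform $h(t)$ decays faster than any power.
\item The local Weyl law gives $\sum_{|t_j|\le T}|u_j(z)|^2+\frac1{4\pi}\sum_{\mathfrak a}\int_{-T}^{T}|E^G_{\mathfrak a}(z,\frac12+it)|^2\dd t\ll_{z}T^2$, uniformly for $z$ in compacta. This follows from Bessel's inequality applied to a positive test kernel.
\item Cauchy--Schwarz in $j$ and $t$, combined with the rapid decay of $h$, then gives absolute and uniform convergence on compacta.
\item Both sides are continuous and agree in $L^2$, so they agree everywhere.
\end{enumerate}
For $\Gt$ the only extra point is that these estimates hold for any cofinite group. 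One can see this directly, or transport them from $\Gamma_0(2)$ via the unitary conjugation by $g$ of Lemma~\ref{conj}.
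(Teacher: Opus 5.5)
Your proposal is correct and follows the same route as the paper. The paper also only checks two things: that the kernel is a $C_0^\infty$ function of $u=\sinh^2(d/2)$ (because it is smooth in $\cosh d=1+2u$), and that the residual spectrum of $\Gt$ consists of the constants by Lemma~\ref{lem:Etheta}(i); it then quotes \cite[Thms.~1.14, 1.16, 7.4]{Iw}. Your additional bookkeeping is accurate and just unpacks what Iwaniec's Theorem~7.4 already gives for any cofinite group: the normalisation $\sigma_{\mathfrak a}^{-1}\Gamma_{\mathfrak a}\sigma_{\mathfrak a}=\langle T\rangle$, the constant term $h(\frac i2)/\vol_G$, and the local Weyl law argument for pointwise convergence.
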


\begin{fact}[Local Weyl law; {\cite[Prop.~7.2]{Iw}}]\label{F:weyl}
For $T\ge1$ and $z=x+iy$ with $y\ge\frac12$,
\[
\sum_{|t_j|\le T}|u_j(z)|^2+\frac1{4\pi}\sum_{\mathfrak a}\int_{-T}^T|E^G_{\mathfrak a}(z,\tfrac12+it)|^2\dd t\ll T^2+Ty .
\]
The same bound holds for $\int_{-T}^T|E(z,\frac12+it)|^2\dd t$. Indeed, \cite[Prop.~7.2]{Iw} gives the bound $T^2+Ty_G(z)$ with the invariant height $y_G(z)=\max_{\mathfrak a}\max_{\gamma\in G}\Im(\sigma_{\mathfrak a}^{-1}\gamma z)$. Each $\sigma_{\mathfrak a}^{-1}\gamma$ is an element of $\PSL$, followed by $z\mapsto z/2$ if $\mathfrak a=\infty$ and $G=\Gt$; hence $y_G(z)\le\max_{\gamma\in\PSL}\Im(\gamma z)\le\max(y,1/y)\le4y$ for $y\ge\frac12$.
\end{fact}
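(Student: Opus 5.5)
The plan is to deduce the bound directly from \cite[Prop.~7.2]{Iw}, applied once for each $G\in\{\Gamma,\Gt\}$, and to reduce everything else to an elementary estimate of the invariant height. \cite[Prop.~7.2]{Iw} states that for any cofinite group with cusps $\mathfrak a$ and scaling matrices $\sigma_{\mathfrak a}$,
\[
\sum_{|t_j|\le T}|u_j(z)|^2+\frac1{4\pi}\sum_{\mathfrak a}\int_{-T}^T|E^G_{\mathfrak a}(z,\tfrac12+it)|^2\dd t\ll T^2+Ty_G(z),
\]
with
\[
y_G(z)=\max_{\mathfrak a}\max_{\gamma\in G}\Im(\sigma_{\mathfrak a}^{-1}\gamma z).
\]
The implied constant depends only on the group, and there are just two groups here. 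The whole task is therefore to show $y_G(z)\ll y$ uniformly for $y\ge\frac12$.

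\emph{Step 1: reduction to a height for $\PSL$.} For $G=\Gamma$ there is one cusp and $\sigma_\infty=I$. For $G=\Gt$ I use the explicit scaling matrices from Subsection~3.1. For the cusp $1$, $\sigma_1=(\begin{smallmatrix}1&0\\1&1\end{smallmatrix})\in\PSL$, so $\sigma_1^{-1}\gamma\in\PSL$. For the cusp $\infty$, $\sigma_\infty z=2z$, so $\sigma_\infty^{-1}\gamma z=\frac12\gamma z$. In this case $\Im$ only decreases compared with $\Im(\gamma z)$, where $\gamma\in\Gt\subset\PSL$. Hence in all cases
\[
y_G(z)\le\max_{\gamma\in\PSL}\Im(\gamma z).
\]

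\emph{Step 2: the elementary height bound.} For $\gamma=(\begin{smallmatrix}a&b\\c&d\end{smallmatrix})$ we have $\Im(\gamma z)=y/|cz+d|^2$.
\begin{itemize}
\item If $c=0$, this equals $y$.
\item If $c\ne0$, then $|cz+d|^2\ge c^2y^2\ge y^2$, so $\Im(\gamma z)\le1/y$.
\end{itemize}
Thus $\max_{\gamma\in\PSL}\Im(\gamma z)\le\max(y,1/y)$. For $y\ge\frac12$ this is at most $\max(y,2)\le4y$. Inserting this into \cite[Prop.~7.2]{Iw} gives the main estimate $\ll T^2+Ty$.

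\emph{Step 3: the level-one Eisenstein series alone.} For $G=\Gamma$, the term $\int_{-T}^T|E(z,\frac12+it)|^2\dd t$ is exactly the continuous-spectrum part of the left-hand side, so the bound follows by positivity of the discrete part. For $G=\Gt$ one cannot simply read it off from the $\Gt$-expansion. Instead, I apply the $G=\Gamma$ case at the same point $z$: $E$ is the Eisenstein series of $\PSL$, independent of $G$.

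There is no real obstacle here; the only point requiring care is Step 1, namely checking that the normalisation of the scaling matrix for the width-$2$ cusp only shrinks heights. This is immediate from $\sigma_\infty z=2z$.
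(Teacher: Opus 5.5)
Your proposal is correct and follows the paper's own argument: apply \cite[Prop.~7.2]{Iw}, use the scaling matrices ($\sigma_1\in\PSL$, $\sigma_\infty z=2z$) to get $y_G(z)\le\max_{\gamma\in\PSL}\Im(\gamma z)$, and bound this by $\max(y,1/y)\le4y$ for $y\ge\frac12$. Your Step~3, which obtains the bound for the level-one $E$ from the case $G=\Gamma$, spells out a deduction the paper leaves implicit.
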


\begin{fact}[Large sieve; {\cite[Thm.~2]{DI}}, in the form {\cite[Lemma~3.9]{Pa}}, which allows $N\ge\frac12$]\label{F:LS}
Let $q\in\{1,2\}$ and let $\{\phi_j\}$ be an orthonormal basis of Maass cusp forms for $\Gamma_0(q)$, with $\Delta\phi_j=(\frac14+t_j^2)\phi_j$ and $\phi_j(z)=\sqrt y\sum_{m\ne0}\rho_j(m)K_{it_j}(2\pi|m|y)e(mx)$. Then for $K\ge1$, $N\ge\frac12$ and complex $a_m$,
\[
\sum_{|t_j|\le K}\frac1{\cosh(\pi t_j)}\Big|\sum_{N<m\le2N}a_m\rho_j(m)\Big|^2\ll_\varepsilon(K^2+N^{1+\varepsilon})\sum_m|a_m|^2 .
\]
\end{fact}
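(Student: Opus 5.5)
Since this is a Fact cited from \cite[Thm.~2]{DI} (in the form \cite[Lemma~3.9]{Pa}), the paper does not re-prove it. If I had to prove it, I would follow the Kuznetsov-formula route of Deshouillers--Iwaniec, in four steps.

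\emph{Set-up and positivity.} I would fix a test function $h_K(t)\ge0$ on $\R\cup i(-\frac12,\frac12)$, for instance $h_K(t)=\frac{t^2+\frac14}{\cosh(\pi t/K)}$ suitably normalised. It should be $\gg1$ for $|t|\le K$ and have a Bessel transform that is controllable. For $q\in\{1,2\}$ there are no exceptional eigenvalues ($\frac14+t_j^2\ge\frac14$, since the first eigenvalue is large in these levels), so only real $t_j$ occur. Positivity of $h_K$ then gives
\[
\sum_{|t_j|\le K}\frac{|\sum a_m\rho_j(m)|^2}{\cosh\pi t_j}\ll\sum_j\frac{h_K(t_j)}{\cosh\pi t_j}\Big|\sum_{N<m\le2N}a_m\rho_j(m)\Big|^2 .
\]
Adding the (nonnegative) Eisenstein contribution to the right-hand side, I would expand $|\cdot|^2$ and apply the Kuznetsov formula for $\Gamma_0(q)$ to each pair $(m,n)$.

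\emph{Terms of the Kuznetsov formula.} The diagonal term is $\delta_{m,n}\int h_K(t)\,t\tanh(\pi t)\dd t\asymp K^2\delta_{m,n}$, which gives the $K^2\sum|a_m|^2$ part. The remaining term is
\[
\sum_{c\equiv0\,(q)}\frac1c\sum_{m,n}a_m\bar a_n\,S(m,n;c)\,h_K^+\Big(\frac{4\pi\sqrt{mn}}c\Big),
\]
where $h^+_K$ is the $J$-Bessel transform of $h_K$. I would estimate it by splitting at $c\asymp\sqrt{mn}/K\asymp N/K$.
\begin{itemize}
\item For large $c$, $h_K^+(x)$ is small (roughly $x$ times a power of $K$). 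I would open the Kloosterman sum, use additive characters and the large sieve inequality for the fractions $a/c$, and sum over $c$. This should give $\ll N^{1+\varepsilon}\sum|a_m|^2$.
\item For small $c$, I would use the oscillatory asymptotics of $J_{2it}$ integrated against $h_K$. This shows $h_K^+(x)$ is negligible unless $x\gg K$, after which the same additive large sieve again contributes $N^{1+\varepsilon}\sum|a_m|^2$.
\end{itemize}
The Weil bound alone would lose, so the additive large sieve over $c$ is essential.

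\emph{The case $\frac12\le N<1$.} Here only $m=1$ lies in $(N,2N]$, and the claim reduces to $\sum_{|t_j|\le K}|\rho_j(1)|^2/\cosh\pi t_j\ll K^2$. This is the $m=n=1$ case of the same Kuznetsov argument, in which the Kloosterman term is $O(1)$. It explains why \cite{Pa} may allow $N\ge\frac12$.

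\emph{Main obstacle.} The hardest step is the uniform treatment of the Bessel transform $h_K^+$ across the transition range $c\asymp N/K$, so that the off-diagonal term is $O((K^2+N^{1+\varepsilon})\sum|a_m|^2)$ with no extra factor of $K$. I would first try Deshouillers--Iwaniec's device: write $h^+_K$ via its integral representation $\int J_{2it}(x)\,\ldots$, separate the variables $m,n$ through a Mellin or Fourier integral in $x$, and then apply the additive large sieve for each fixed frequency.
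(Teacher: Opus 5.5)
The paper does not prove this statement. It imports it from \cite[Thm.~2]{DI} in the form \cite[Lemma~3.9]{Pa}, whose point is to allow $N\ge\frac12$; this is needed for the single coefficient $m=1$ in Lemma~\ref{coeff} and Fact~\ref{F:L}(d). Your opening sentence is therefore exactly the paper's treatment, and the citation is all that is required.

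The Kuznetsov sketch you add does not work as written. The sample weight is already wrong. $h_K(t)=(t^2+\frac14)/\cosh(\pi t/K)$ is $\asymp t^2$ for $1\le|t|\le K$, so its diagonal term $\pi^{-2}\int h_K(t)\,t\tanh(\pi t)\dd t$ is $\asymp K^4$, not $K^2$. No rescaling makes it both $\gg1$ on $|t|\le K$ and of mass $\asymp K^2$. You need $h\asymp1$ on $|t|\le K$, for example $h(t)=1/\cosh(\pi t/2K)$.

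The genuine gap is the off-diagonal term at small moduli $c$.

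\emph{Size of $h^+$.} For such $h$ one has $h^+(x)\ll x$ for $x\le1$. For $x\ge1$, however, $h^+(x)$ has size about $\min(\sqrt x,K^2/\sqrt x)$ and oscillates like $e^{\pm ix}$. In particular it is not negligible for $1\le x\ll K$.

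\emph{Bounding by size fails.} For $N\asymp K^2$, the single modulus $c=1$ is then bounded only by $K(\sum|a_m|)^2\le K^3\sum|a_m|^2$, which exceeds the target $K^2\sum|a_m|^2$.

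\emph{Your proposed remedy loses more.} Separating $m$ and $n$ by a Mellin integral and applying the additive large sieve at each fixed frequency fails, because the Mellin transform of $h^+$ localised to $x\asymp X=N/c$ is spread over $|\tau|\lesssim X$. At $c=1$, $N\asymp K^2$ this gives only $K^4\sum|a_m|^2$.

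\emph{What is needed.} The oscillation must be used. One way is to keep the integral over the Mellin variable and apply the hybrid large sieve
\[
\sum_{c\le C}\ \sum_{\substack{d\bmod c\\(d,c)=1}}\int_{-T}^{T}\Big|\sum_{N<m\le2N}b_m m^{i\tau}e\Big(\frac{md}c\Big)\Big|^2\dd\tau\ll(C^2T+N)\sum_m|b_m|^2\qquad(T\ge1),
\]
which follows from Gallagher's lemma together with the additive large sieve. The localised Mellin transform is $\ll1$ on $|\tau|\lesssim X$, so this gives $O(N\sum|a_m|^2)$ for each dyadic range of $c\le N$.

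\emph{Large moduli.} For $c>N$ the additive large sieve gives a bound of order $N\sum|a_m|^2$ for every dyadic block of $c$, and the sum of these bounds diverges. The tail, say $c>N^2$, therefore needs Weil's bound.

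These are exactly the steps where the work of \cite{DI} lies, so your sketch does not replace the citation.
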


\begin{fact}[Riemann zeta function; {\cite[Ch.~3, 7]{Ti}}]\label{F:zeta}
$\int_T^{2T}|\zeta(\frac12+it)|^2\dd t\ll T\log T$, $\int_T^{2T}|\zeta(\frac12+it)|^4\dd t\ll T\log^4T$, and $|\zeta(\sigma+it)|^{-1}\ll\log(2+|t|)$ uniformly for $\sigma\ge1$, $|t|\ge1$.
\end{fact}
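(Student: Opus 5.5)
This is a cited Fact, so within the paper it suffices to point to Titchmarsh. Still, here is how I would derive each of the three bounds from standard tools, in increasing order of depth.

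\emph{Second moment.} I would use the approximate functional equation in its simplest form,
\[
\zeta(\tfrac12+it)=\sum_{m\le X}m^{-1/2-it}+\chi(\tfrac12+it)\sum_{m\le Y}m^{-1/2+it}+O(T^{-1/4}),
\]
valid for $T\le t\le2T$ with $2\pi XY=t$, taking $X=Y=\sqrt{t/2\pi}$. The bound $|a+b|^2\le2|a|^2+2|b|^2$ reduces matters to the mean square of a Dirichlet polynomial of length $\asymp\sqrt T$. The Montgomery--Vaughan mean value theorem gives
\[
\int_T^{2T}\Big|\sum_{m\le N}a_mm^{-it}\Big|^2\dd t=\sum_m|a_m|^2\big(T+O(m)\big).
\]
With $a_m=m^{-1/2}$ and $N\asymp\sqrt T$ this is $T\log T+O(T)$. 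Since $X$ depends on $t$, I would first split $[T,2T]$ into $O(1)$ subintervals with frozen $X$, or use a smoothed version of the functional equation.

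\emph{Fourth moment.} This is Ingham's theorem, and I would argue the same way with $\zeta^2$ in place of $\zeta$. The approximate functional equation for $\zeta(s)^2$ has two Dirichlet polynomials with coefficients $\tau(m)m^{-1/2}$ and length $\asymp T/2\pi$. The mean value theorem then gives
\[
\ll\sum_{m\ll T}\frac{\tau(m)^2}{m}(T+m)\ll T\log^4T,
\]
using $\sum_{m\le x}\tau(m)^2/m\asymp\log^4x$. This step is the main obstacle, because the mean value theorem must be applied at length comparable to $T$, where the $O(m)$ term is of the same size as the main term. One therefore needs the sharp Montgomery--Vaughan form rather than a crude large sieve, together with a careful cut-off of the approximate functional equation for $\zeta^2$. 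Alternatively, one can simply quote Titchmarsh, Theorem~7.16.

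\emph{Bound for $1/\zeta$ on $\sigma\ge1$.} I would combine two ingredients.
\begin{itemize}
\item The de la Vallée Poussin inequality $3+4\cos\phi+\cos2\phi\ge0$ gives
\[
\zeta(\sigma)^3|\zeta(\sigma+it)|^4|\zeta(\sigma+2it)|\ge1\quad(\sigma>1).
\]
\item The bounds $\zeta(\sigma)\ll(\sigma-1)^{-1}$ and $\zeta(\sigma+2it)\ll\log|t|$ then yield $|\zeta(\sigma+it)|\gg(\sigma-1)^{3/4}(\log|t|)^{-1/4}$.
\end{itemize}
For $1\le\sigma\le1+\eta$, I would compare with the value at $\sigma_1=1+\eta$ using $\zeta'(s)\ll\log^2|t|$ on $\sigma\ge1-1/\log|t|$:
\[
|\zeta(\sigma+it)|\ge|\zeta(\sigma_1+it)|-(\sigma_1-\sigma)A\log^2|t|.
\]
Choosing $\eta=c(\log|t|)^{-9}$ gives the weaker bound $1/\zeta\ll\log^7|t|$. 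To reach the stated $\log(2+|t|)$, I would instead use the zero-free region of width $c/\log|t|$. Applying the Borel--Carath\'eodory lemma to $\log\zeta$ on discs centred at $1+1/\log|t|+it$ gives $|\log\zeta(\sigma+it)|\le\log\log|t|+O(1)$ for $\sigma\ge1$, and this is the claimed bound (Titchmarsh, \S3.6--3.11). For $\sigma\ge2$ the bound is trivial, via $1/\zeta(s)=\sum\mu(m)m^{-s}$.
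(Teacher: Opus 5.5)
The paper gives no proof of this Fact; it only cites Titchmarsh, so a reference is all that is required. Your sketches of the two moment bounds follow the standard route and are sound, with two small corrections. First, the $t$-dependence of the lengths is harmless if you expand the square directly, since $\int_a^b(m/n)^{it}\dd t\ll|\log(m/n)|^{-1}$ on every interval. Second, in the fourth moment the $O(m)$ term contributes $\sum_{m\ll T}\tau(m)^2\asymp T\log^3T$. This is a logarithm smaller than the main term, not of the same size. So any mean value bound of the shape $(T+O(N))\sum|a_m|^2$ suffices, and the real input is the approximate functional equation for $\zeta^2$. Your de la Vall\'ee Poussin derivation of $1/\zeta\ll\log^7|t|$ is also correct.

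The gap is the final upgrade to $\log(2+|t|)$. For $\log\zeta$ to be holomorphic on $|z-z_0|\le R$ with $z_0=1+1/\log|t|+it$, the disc must lie in the zero-free region, so $R\le(1+c)/\log|t|$. To reach $\sigma=1$ you need $r=1/\log|t|$. Borel--Carath\'eodory then gives
\[
|\log\zeta(z)|\le\frac{2r}{R-r}\max_{|w-z_0|\le R}\log|\zeta(w)|+\frac{R+r}{R-r}|\log\zeta(z_0)|\le\frac{R+3r}{R-r}\big(\log\log|t|+O(1)\big).
\]
Since $(R+3r)/(R-r)$ stays bounded away from $1$, this yields only $1/\zeta\ll(\log|t|)^{C}$ for some $C>1$. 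The coefficient $1$ in front of $\log\log|t|$ is exactly what the claim needs.

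The missing ingredient is the derivative bound $\frac{\zeta'}{\zeta}(s)\ll\log|t|$ for $\sigma\ge1-c/(2\log|t|)$. To get it, subtract the expansions $\frac{\zeta'}{\zeta}(s)=\sum_{|\gamma-t|\le1}\frac1{s-\rho}+O(\log|t|)$ at $s$ and at $s_1=1+1/\log|t|+it$. The zero-free region gives $|s-\rho|\gg|s_1-\rho|$ and $\sum_\rho|s_1-\rho|^{-2}\ll\log^2|t|$. Then for $1\le\sigma\le1+1/\log|t|$, using the Euler product at $s_1$,
\[
|\log\zeta(\sigma+it)|\le\log\zeta\big(1+\tfrac1{\log|t|}\big)+\int_\sigma^{1+1/\log|t|}\Big|\frac{\zeta'}{\zeta}(u+it)\Big|\dd u\le\log\log|t|+O(1).
\]
For $\sigma\ge1+1/\log|t|$ use $|\log\zeta(s)|\le\log\zeta(\sigma)$ directly. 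Hence $|1/\zeta(s)|\le e^{|\log\zeta(s)|}\ll\log|t|$, and bounded $|t|$ is handled by compactness and $\zeta(1+it)\ne0$.

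For the paper's own uses any fixed power of $\log$ would do, since it is absorbed into $(1+|t|)^\varepsilon$ or into the logarithms in the proof of Theorem~\ref{thm:sqasymp}. So your $\log^7$ bound would suffice there, but not for the Fact as stated.
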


\section{The kernel}\label{sec:kernel}
Recall $\delta=M/\sqrt D\in(0,1]$, $k(r)=\Phi(\sinh r/\delta)$ and $r_0=\operatorname{arcsinh}(2\delta)$. Fermi coordinates around $\ell_0=i\R_{>0}$ are $w(\sigma,r)=e^\sigma(\sinh r+i)/\cosh r$. Here $r$ is the signed distance to $\ell_0$, $\dd\mu=\cosh r\dd\sigma\dd r$, and
\[
\Delta=-\big(\partial_r^2+\tanh r\,\partial_r+\cosh^{-2}r\,\partial_\sigma^2\big).
\]

\begin{lemma}\label{average}
Let $\Delta F=\lambda F$. Assume that $F(w(\sigma,r))$ and its derivatives up to order $2$ are integrable in $\sigma$, uniformly for $r$ in compact sets. Then $\Psi(r)=\int_\R F(w(\sigma,r))\dd\sigma$ satisfies $\Psi''+\tanh r\,\Psi'+\lambda\Psi=0$.
\end{lemma}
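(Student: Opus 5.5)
We integrate the eigenfunction equation over $\sigma$ and use that, in Fermi coordinates, the $\sigma$-derivative terms drop out after integration. Write $F(\sigma,r)=F(w(\sigma,r))$. By the stated form of the Laplacian, $\Delta F=\lambda F$ reads
\[
\partial_r^2F+\tanh r\,\partial_rF+\cosh^{-2}r\,\partial_\sigma^2F+\lambda F=0
\]
pointwise in $(\sigma,r)$. We integrate this identity over $\sigma\in\R$ for fixed $r$.

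\emph{Step 1: differentiating under the integral.} The hypothesis gives integrability in $\sigma$ of $F$ and of its derivatives up to order $2$, uniformly for $r$ in compact sets. We read this as domination: on each compact $r$-interval there is an integrable majorant in $\sigma$ for $|\partial_rF|$ and $|\partial_r^2F|$. Dominated convergence then justifies
\[
\Psi'(r)=\int_\R\partial_rF\dd\sigma,\qquad \Psi''(r)=\int_\R\partial_r^2F\dd\sigma .
\]
Concretely, one writes difference quotients via the mean value theorem and dominates them by the majorant. This is the only point where care is needed. If the hypothesis is meant literally as uniform $L^1$ bounds rather than a majorant, I would instead integrate in $r$ over a compact interval and use Fubini. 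That gives $\Psi(r)-\Psi(r_1)=\int_{r_1}^r\int_\R\partial_rF\dd\sigma\dd\rho$, and similarly for $\Psi'$, so $\Psi\in C^2$ with the same formulas.

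\emph{Step 2: the $\sigma$-term vanishes.} We need
\[
\int_\R\partial_\sigma^2F(\sigma,r)\dd\sigma=\lim_{S\to\infty}\big[\partial_\sigma F(S,r)-\partial_\sigma F(-S,r)\big].
\]
Both $\partial_\sigma F$ and $\partial_\sigma^2F$ are integrable in $\sigma$. Hence $\partial_\sigma F(\sigma,r)$ has limits as $\sigma\to\pm\infty$, since it is the integral of an integrable function plus a constant. These limits must be $0$, because $\partial_\sigma F$ is itself integrable. So the integral of $\partial_\sigma^2F$ is $0$.

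\emph{Step 3: conclusion.} Combining Steps 1 and 2, the integrated equation gives
\[
\Psi''+\tanh r\,\Psi'+\lambda\Psi=0 .
\]
The main obstacle is only the justification in Step 1. The rest is formal, and the identity for $\Delta$ in Fermi coordinates is taken as given from the text above.
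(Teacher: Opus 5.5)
Your proposal is correct and follows the paper's proof, which integrates $\Delta F=\lambda F$ over $\sigma$ and notes that $\int\partial_\sigma^2F\dd\sigma=0$; your Steps 1 and 2 supply the justification that the paper leaves implicit. One small point: under the literal uniform-$L^1$ reading, continuity of $r\mapsto\int\partial_r^2F\dd\sigma$ is not automatic, but it follows from the integrated equation itself, which expresses this function through the continuous $\Psi$ and $\Psi'$.
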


\begin{proof}
Integrate $\Delta F=\lambda F$ over $\sigma$; the term $\int\partial_\sigma^2F\dd\sigma$ vanishes.
\end{proof}

For $\lambda\in\C$ let $P_\lambda$ be the even solution of $P''+\tanh r\,P'+\lambda P=0$ with $P_\lambda(0)=1$. It is entire in $\lambda$.

\begin{lemma}\label{Pbound}
If $\lambda=\mu^2$ with $\mu\in\C$, then $|P_\lambda(r)|\le e^{|\Im\mu||r|}$.
\end{lemma}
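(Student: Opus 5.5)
The plan is a Liouville-type substitution followed by a Grönwall estimate on the associated integral equation. By evenness of $P_\lambda$ we may take $r\ge0$. We remove the first-order term by writing $P_\lambda(r)=(\cosh r)^{-1/2}y(r)$. A direct computation turns $P''+\tanh r\,P'+\lambda P=0$ into
\[
y''+\big(\mu^2+V(r)\big)y=0,\qquad V(r)=\tfrac14\tanh^2r-\tfrac12\cosh^{-2}r\cdot\cosh r\cdot\tfrac{\dd}{\dd r}\tanh r\ \text{(explicitly)},
\]
and in fact $V(r)=\frac14-\frac34\cosh^{-2}r$. The resulting equation $y''+(\mu^2+\frac14-\frac34\operatorname{sech}^2r)y=0$ has $y(0)=1$, $y'(0)=0$. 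This is only the first idea, however, and its constant $\frac14$ spoils a clean comparison with $e^{|\Im\mu|r}$. So we will instead argue more directly.

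The cleaner route is an energy (Lyapunov) argument on the original equation. Put $\nu=|\Im\mu|$ and $P=P_\lambda$. We work with the first-order system for $(P,P')$ and the Wronskian-type quantity
\[
W(r)=|P'(r)|^2+|\mu|^2|P(r)|^2 .
\]
The difficulty is that $\mu$ is complex, so this $W$ does not control growth sharply. Instead we use the integral representation. Since $(\cosh r\,P')'=-\lambda\cosh r\,P$, we get
\[
P'(r)=-\frac{\mu^2}{\cosh r}\int_0^r\cosh s\,P(s)\dd s .
\]
We compare with the model $P_0(r)=\cos(\mu r)$, which solves $P''+\mu^2P=0$ and satisfies $|\cos(\mu r)|\le\cosh(\nu r)\le e^{\nu r}$. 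Writing the equation as $P''+\mu^2P=-\tanh r\,P'$, variation of constants gives
\[
P(r)=\cos(\mu r)-\int_0^r\frac{\sin(\mu(r-s))}{\mu}\tanh s\,P'(s)\dd s .
\]
The main obstacle is that the perturbation $\tanh s\,P'$ is not small, so a naive Grönwall step only yields $e^{(\nu+C)r}$.

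To overcome this we exploit the damping sign. Take first $\mu$ real, i.e.\ $\lambda\ge0$. Then $E(r)=P'^2+\lambda P^2$ satisfies $E'=-2\tanh r\,P'^2\le0$. Hence $\lambda P^2\le E(0)=\lambda$, so $|P|\le1=e^{0}$, as required. For general complex $\mu$ we conjugate by the exponential weight. Setting $P=e^{\nu r}Q$ is not linear-friendly; instead we apply the real-case energy argument to the vector $(\Re,\Im)$ of the system $Y'=A(r)Y$, where $Y=(P',\mu P)$ and
\[
A=\begin{pmatrix}-\tanh r&-\mu\\ \mu&0\end{pmatrix}.
\]
Then $\frac{\dd}{\dd r}|Y|^2=2\Re\langle A Y,Y\rangle\le 2\lambda_{\max}\big(\tfrac{A+A^*}2\big)|Y|^2$. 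The Hermitian part is $\begin{pmatrix}-\tanh r&-i\Im\mu\\ i\Im\mu&0\end{pmatrix}$, whose top eigenvalue is
\[
\frac{-\tanh r+\sqrt{\tanh^2r+4\nu^2}}2\le\nu .
\]
Thus $|Y(r)|\le e^{\nu r}|Y(0)|=e^{\nu r}|\mu|$, which gives $|P(r)|\le e^{\nu r}$ for $\mu\ne0$. The case $\mu=0$ is trivial, since then $P\equiv1$; alternatively it follows by continuity in $\mu$, as $P_\lambda$ is entire. The step to check carefully is this eigenvalue bound, which is where the damping sign $-\tanh r\le0$ enters.
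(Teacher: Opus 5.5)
Your final argument is correct and is essentially the paper's proof: $|Y|^2=|P'|^2+|\mu|^2|P|^2$ is exactly the paper's energy $\mathcal E$. Bounding its derivative by the top eigenvalue $\frac12\bigl(-\tanh r+\sqrt{\tanh^2r+4\nu^2}\bigr)\le\nu$ of the Hermitian part of $A$ is equivalent to the paper's step of discarding $-2\tanh r\,|P'|^2\le0$ and using $\bigl||\mu|^2-\mu^2\bigr|=2|\mu|\,|\Im\mu|$; in both, Gr\"onwall and $Y(0)=(0,\mu)$ then finish the proof.

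You should delete the abandoned preliminaries, which contain errors. Your remark that this $W$ ``does not control growth sharply'' is contradicted by your own conclusion. The Liouville potential is $V(r)=-\frac14-\frac14\operatorname{sech}^2r$, not $\frac14-\frac34\cosh^{-2}r$.
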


\begin{proof}
For $\mu=0$, $P\equiv1$. Otherwise, for $r\ge0$ let $\mathcal E=|P'|^2+|\mu|^2|P|^2$. By the equation,
\[
\mathcal E'=-2\tanh r|P'|^2+2\Re\big(\bar P'P(|\mu|^2-\mu^2)\big)\le2|\Im\mu|\,\mathcal E,
\]
since $||\mu|^2-\mu^2|=2|\mu||\Im\mu|$. Hence $|\mu|^2|P|^2\le\mathcal E(r)\le|\mu|^2e^{2|\Im\mu|r}$. Use evenness for $r<0$.
\end{proof}

Let $\hat h(\lambda)=\int_\R k(r)P_\lambda(r)\cosh r\dd r$, which is entire, and $H(s)=\hat h(s(1-s))$.

\begin{lemma}\label{hbounds}
\begin{enumerate}[label=(\roman*)]
\item For $\lambda=\mu^2$, $|\hat h(\lambda)|\le e^{|\Im\mu|r_0}\delta\|\Phi\|_1$ and $\hat h(\lambda)\ll_Ae^{|\Im\mu|r_0}\delta(1+\delta^2|\lambda|)^{-A}$.
\item $H(0)=\delta\int\Phi$ and $H'(0)=-\int_\R k(r)\cosh r\log\cosh r\dd r$.
\item If $s=\frac12+i\nu$, then $s(1-s)=\mu^2$ with $|\Im\mu|\le|\Im\nu|$.
\end{enumerate}
\end{lemma}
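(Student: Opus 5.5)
The three parts are essentially independent, and I will prove them in the order (iii), (i), (ii), using only Lemma~\ref{Pbound}, the support of $k$ in $|r|\le r_0$, and the ODE defining $P_\lambda$.

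\emph{(iii).} Put $\mu=\nu$. Then $s(1-s)=(\frac12+i\nu)(\frac12-i\nu)=\frac14+\nu^2$, so $\mu^2=\frac14+\nu^2$. Write $\nu=a+ib$. Then $\Im(\mu^2)=2ab$ and $\Re(\mu^2)=\frac14+a^2-b^2$. Writing $\mu=\alpha+i\beta$, we have $\alpha\beta=ab$ and $\alpha^2-\beta^2=\frac14+a^2-b^2\ge a^2-b^2$. Suppose $\beta^2>b^2$. Then $\alpha^2>a^2$ also, so $\alpha^2\beta^2>a^2b^2$, contradicting $\alpha\beta=ab$. Hence $|\beta|\le|b|$; the case $b=0$ or $a=0$ is included.

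\emph{(i), first bound.} By Lemma~\ref{Pbound}, $|P_\lambda(r)|\le e^{|\Im\mu|r_0}$ on the support. Substituting $x=\sinh r/\delta$ gives $\cosh r\dd r=\delta\dd x$. Hence $\int|k(r)|\cosh r\dd r=\delta\|\Phi\|_1$, which is the first bound.

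\emph{(i), decay in $\lambda$.} Integrate by parts using the ODE in divergence form, $(\cosh r\,P')'=-\lambda\cosh r\,P$. Then
\[
\lambda\hat h(\lambda)=-\int k(\cosh r P')'\dd r=\int k'\cosh r\,P'\dd r=-\int(\cosh r\,k')'P\dd r=\int (Lk)\,P\cosh r\dd r,
\]
where $Lk=-(k''+\tanh r\,k')$. Iterating $A$ times gives $\lambda^A\hat h(\lambda)=\int(L^Ak)P_\lambda\cosh r\dd r$. The point is to check $L^Ak\ll\delta^{-2A}$ on the support, with the integral of $|L^Ak|\cosh r\dd r$ bounded by $\ll\delta^{1-2A}$. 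In the variable $x=\sinh r/\delta$ we have $\frac{d}{dr}=\frac{\cosh r}{\delta}\frac{d}{dx}$ and $\tanh r\ll\delta$, and $\cosh r$ and its derivatives are bounded on $|r|\le\operatorname{arcsinh}2$. So each application of $L$ costs $\delta^{-2}$ times derivatives of $\Phi$ with bounded coefficients. Combining this with Lemma~\ref{Pbound} gives $|\lambda|^A|\hat h|\ll e^{|\Im\mu|r_0}\delta^{1-2A}$. Together with the first bound, this yields the stated $(1+\delta^2|\lambda|)^{-A}$ form.

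\emph{(ii).} $H(0)=\hat h(0)$, and $P_0\equiv1$, so $H(0)=\delta\int\Phi$ by the same substitution as above. For $H'(0)$, note $\frac{d}{ds}s(1-s)|_{s=0}=1$, so $H'(0)=\partial_\lambda\hat h(0)=\int k\,\partial_\lambda P_\lambda|_{\lambda=0}\cosh r\dd r$. Differentiating the ODE in $\lambda$ at $\lambda=0$ shows that $Q=\partial_\lambda P_\lambda|_0$ is the even solution with $Q(0)=0$ of $(\cosh r\,Q')'=-\cosh r$. Integrating once gives $Q'=-\tanh r$, and then $Q=-\log\cosh r$, which gives the formula.

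The only step needing care is the uniform control of $L^Ak$ in $\delta$ in part~(i); the rest is bookkeeping.
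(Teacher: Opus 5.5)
Your proof is correct and follows the paper's argument for (i) and (ii). The paper uses the same divergence-form integration by parts, with each application of the operator costing $\delta^{-2}$. It does the bookkeeping in $u=\sinh r$, where the operator becomes $\frac{\dd}{\dd u}(1+u^2)\frac{\dd}{\dd u}$ and $\cosh r\dd r=\dd u$; this is the same computation as yours in $x=u/\delta$. It also identifies $\partial_\lambda P_\lambda|_{\lambda=0}=-\log\cosh r$ exactly as you do.

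For (iii) you argue by contradiction from $\alpha\beta=ab$ and $\alpha^2-\beta^2\ge a^2-b^2$. The paper instead uses $|\Im\sqrt z|^2=\frac12(|z|-\Re z)$ together with $|z|\le\frac14+|\nu|^2$; the two arguments are equally short. Your opening ``Put $\mu=\nu$'' is a slip and should read ``let $\mu$ be a square root of $\frac14+\nu^2$''.
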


\begin{proof}
\emph{(i)} Let $\mathcal Lg=(\cosh r\,g')'/\cosh r$. Then $\mathcal LP_\lambda=-\lambda P_\lambda$, and two integrations by parts give $\hat h(\lambda;k)=-\lambda^{-1}\hat h(\lambda;\mathcal Lk)$. In the variable $u=\sinh r$ we have $\mathcal L=\frac{\dd}{\dd u}(1+u^2)\frac{\dd}{\dd u}$ and $\cosh r\dd r=\dd u$. Since $|u|\le2\delta\le2$ on the support,
\[
\int|\mathcal L^Ak|\cosh r\dd r\ll_A\delta^{1-2A}.
\]
Now use Lemma~\ref{Pbound}.

\emph{(ii)} $P_0=1$, so $H(0)=\int k\cosh r\dd r=\delta\int\Phi$. Moreover $\frac{\dd\lambda}{\dd s}=1$ at $s=0$, and $R_0=\partial_\lambda P_\lambda|_{\lambda=0}$ solves $(\cosh r R_0')'=-\cosh r$ with $R_0(0)=R_0'(0)=0$. Hence $R_0=-\log\cosh r$.

\emph{(iii)} $s(1-s)=\frac14+\nu^2=:z$. For any $z\in\C$, $|\Im\sqrt z|^2=\frac12(|z|-\Re z)$, and
\[
|z|-\Re z\le\tfrac14+|\nu|^2-\tfrac14-(\Re\nu)^2+(\Im\nu)^2=2(\Im\nu)^2 .\qedhere
\]
\end{proof}

\begin{lemma}\label{kernel}
There is a smooth $K$ on $[0,\infty)$ with $K(d)=0$ for $d>r_0$, such that $k(\mathrm{dist}(z,\ell))=\int_\ell K(d(z,w))\dd s(w)$ for every geodesic $\ell$ and every $z\in\Hh$. Its Selberg transform is $h(t)=\hat h(\frac14+t^2)$; in particular $h(\frac i2)=H(0)$.
\end{lemma}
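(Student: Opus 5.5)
By the isometry invariance of both sides, it suffices to construct $K$ for the single geodesic $\ell_0=i\R_{>0}$. For $z$ at signed distance $r$ from $\ell_0$, use the Fermi coordinates of Section~\ref{sec:kernel}. The integral $\int_{\ell_0}K(d(z,w))\dd s(w)$ is then a function of $|r|$ alone. Explicitly, the distance from $z=w(0,r)$ to $w(\sigma,0)=ie^\sigma$ satisfies $\cosh d=\cosh r\cosh\sigma$. With $K(d)=\kappa(\cosh d)$ the requirement becomes an Abel-type integral equation
\[
k(r)=\int_\R\kappa(\cosh r\cosh\sigma)\dd\sigma=2\int_{c}^\infty\frac{\kappa(x)\dd x}{\sqrt{x^2-c^2}},\qquad c=\cosh r .
\]
I would put $g(c)=k(\operatorname{arccosh}c)=\Phi(\sqrt{c^2-1}/\delta)$ for $c\ge1$. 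Since $\Phi$ is even and smooth, $g$ is a smooth function of $c^2$. It extends smoothly to $c\ge0$ and vanishes for $c\ge\cosh r_0$.

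The main step is to invert this transform. Substituting $X=x^2$ and $C=c^2$ gives $g=\int_C^\infty\kappa(\sqrt X)(X-C)^{-1/2}X^{-1/2}\dd X$. This is the classical Abel transform, with inverse
\[
\kappa(\sqrt X)X^{-1/2}=-\frac1\pi\int_X^\infty\frac{\tilde g'(C)\dd C}{\sqrt{C-X}},\qquad \tilde g(C)=g(\sqrt C).
\]
Because $\tilde g$ is smooth and compactly supported on $C\ge0$, $\kappa$ is smooth and vanishes for $x>\cosh r_0$. So $K$ is smooth on $[0,\infty)$ as a function of $\cosh d$, which is the form Fact~\ref{F:pretrace} needs, and $K(d)=0$ for $d>r_0$. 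The only care needed here is smoothness near $x=1$, i.e.\ $d=0$. This is automatic, since the inversion formula gives smoothness in $X$ on all of $(0,\infty)$.

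For the Selberg transform, apply Fact~\ref{F:pretrace}(a) to an eigenfunction $F$ with $\Delta F=(\frac14+t^2)F$. Integrate $\int_\Hh K(z,w)F(w)\dd\mu(w)=h(t)F(z)$ over $z\in\ell_0$ for compactly supported test data. Instead, I would proceed directly. Take $F$ to be a radial eigenfunction around $\ell_0$ that depends only on the Fermi coordinate $r$, namely $F(w(\sigma,r))=P_\lambda(r)$ with $\lambda=\frac14+t^2$. This is an eigenfunction by the form of $\Delta$. Then, using $\dd\mu=\cosh r\dd\sigma\dd r$ and Fubini,
\[
h(t)=h(t)F(i)=\int_\Hh K(d(i,w))P_\lambda(r(w))\dd\mu(w)=\int_\R P_\lambda(r)\cosh r\Big(\int_\R K(d(i,w(\sigma,r)))\dd\sigma\Big)\dd r .
\]
By symmetry of distance, the inner integral is the geodesic integral along the geodesic at distance $r$ from $i$. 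Up to an isometry, this is the integral of $K(d(\cdot,i))$ along the curve at distance $r$, which is not literally a geodesic. To fix this, I would swap roles. Write $\int_\Hh K(d(i,w))F(w)\dd\mu$ with $w$ parametrized by its projection to $\ell_0$. Alternatively, use that the fibres over $\sigma$ are geodesics orthogonal to $\ell_0$ and integrate along geodesics through $i$ instead.

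The cleanest route, which I would ultimately use, is to apply (a) with the roles reversed. Take $z=w(0,r)$ and integrate $K(d(z,w))$ over $w\in\Hh$ against $F\equiv P_\lambda(r(w))$, writing $\Hh$ as the union of the geodesics $\ell_s$ parallel to $\ell_0$. This is the main obstacle; the level curves at distance $s$ are equidistant curves, not geodesics. So I would instead verify $h(t)=\hat h(\frac14+t^2)$ by the uniqueness argument below.

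Both $\lambda\mapsto\int K(d(i,w))F(w)\dd\mu$ and $\hat h$ are entire. Consider the $\sigma$-average over $\ell_0$ of $\int K(d(z,w))F(w)\dd\mu(w)$, which equals $h(t)\Psi(r)$ with $\Psi(r)=\int F(w(\sigma,r))\dd\sigma$ for suitable decaying $F$. This is the same as $\int F(w)\,k(\mathrm{dist}(w,\ell_0))\dd\mu$ by the defining property of $K$ with $\ell=\ell_0$. By Lemma~\ref{average}, $\Psi=\Psi(0)P_\lambda$ for even $F$. Evaluating at $r=0$ gives $h(t)\Psi(0)=\Psi(0)\hat h(\lambda)$. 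To justify the decay hypothesis, I would take $F$ to be $\cosh(\cdot)$-weighted wave packets on the two-sided cover, or argue by analytic continuation from $t$ where suitable integrable eigenfunctions exist. Finally, $h(\frac i2)=\hat h(0)=H(0)$.
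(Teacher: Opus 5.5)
Your construction of $K$ coincides with the paper's: the same Abel equation in $X=\cosh^2 r$ and the same inversion, with your $\kappa(\sqrt X)X^{-1/2}$ equal to the paper's $G(X)$. The gap is in the transform step, in the version you finally commit to. Your two-way evaluation of $\int_\Hh k(\mathrm{dist}(w,\ell_0))F(w)\dd\mu(w)$ is the paper's argument. It rests entirely on an eigenfunction $F$ with $\Delta F=(\frac14+t^2)F$ that satisfies two conditions:
\begin{itemize}
\item it is integrable, with derivatives up to order $2$, along the equidistant curves of $\ell_0$, which is needed for Fubini and Lemma~\ref{average};
\item it has $\int_\R F(ie^\sigma)\dd\sigma\neq0$, so that you can divide.
\end{itemize}
You never exhibit such an $F$. ``Wave packets'' are not eigenfunctions for a single eigenvalue, and analytic continuation ``from $t$ where suitable integrable eigenfunctions exist'' presupposes exactly the missing object. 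The paper takes $F(w)=(\Im w/|w+1|^2)^s$ with $s=\frac12+it$, i.e.\ $\Im(\gamma w)^s$ for $\gamma w=-1/(w+1)$. This $F$ decays exponentially along equidistant curves, satisfies $F(ie^\sigma)=(2\cosh\sigma)^{-s}$, and has $\int_\R(\cosh\sigma)^{-s}\dd\sigma=\sqrt\pi\,\Gamma(\frac s2)/\Gamma(\frac{s+1}2)\neq0$. The paper also does not need $F$ to be even, because the odd solution of the radial equation integrates to zero against the even function $k(r)\cosh r$. Separately, the average over $z\in\ell_0$ gives $h(t)\Psi(0)$, not $h(t)\Psi(r)$.

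However, the computation you abandoned was already a complete proof, and a cleaner one. Take $F(w(\sigma,r))=P_\lambda(r)$. By the form of $\Delta$ this is a smooth eigenfunction on all of $\Hh$, and Fact~\ref{F:pretrace}(a) needs no growth condition because $K$ has compact support. You correctly reached
$h(t)=\int_\R P_\lambda(r)\cosh r\,\bigl(\int_\R K(d(i,w(\sigma,r)))\dd\sigma\bigr)\dd r$.
Your worry about equidistant curves is unfounded. By the formula you wrote yourself, $\cosh d(i,w(\sigma,r))=\cosh r\cosh\sigma$, which is symmetric in $r$ and $\sigma$. So the inner integral is $\int_\R\kappa(\cosh r\cosh\sigma)\dd\sigma$, which is literally the left side of your Abel equation, i.e.\ $k(r)$. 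No geodesic property of the curve $r=\mathrm{const}$ is used, and $\dd\sigma$ is not its arc length. Hence $h(t)=\int_\R k(r)P_\lambda(r)\cosh r\dd r=\hat h(\frac14+t^2)$ immediately.

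Compared with the paper's route, this needs no decaying eigenfunction, no Lemma~\ref{average}, and no non-vanishing Gamma factor. It also holds for every $\lambda\in\C$ directly. In particular, at $\lambda=0$ we have $P_0=1$, so $h(\frac i2)=H(0)$ requires no continuation argument.
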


\begin{proof}
\emph{Construction.} It suffices to take $\ell=\ell_0$. If $\mathrm{dist}(z,\ell_0)=r$, then $\cosh d(z,ie^\sigma)=\cosh r\cosh\sigma$. Put $X=\cosh^2r$ and $G(w)=K(\operatorname{arccosh}\sqrt w)/\sqrt w$. The substitutions $v=\cosh r\cosh\sigma$, $w=v^2$ turn the required identity into the Abel equation
\[
\tilde k(X)=\int_X^\infty G(w)(w-X)^{-1/2}\dd w,\qquad \tilde k(\cosh^2r)=k(r).
\]
Since $\Phi$ is even and smooth, $\tilde k(X)=\Phi_0((X-1)/\delta^2)$ with $\Phi_0(u)=\Phi(\sqrt u)$ smooth, so $\tilde k$ is smooth with support in $[1,1+4\delta^2]$. Abel inversion $G(w)=-\frac1\pi\int_w^\infty\tilde k'(X)(X-w)^{-1/2}\dd X$ gives a smooth $G$ with the same support. Put $K(d)=\cosh d\,G(\cosh^2d)$.

\emph{The transform.} For $s=\frac12+it$, the function $F(w)=(\Im w/|w+1|^2)^s$ satisfies $\Delta F=(\frac14+t^2)F$. Moreover $F(ie^\sigma)=(2\cosh\sigma)^{-s}$, and $F$ decays exponentially along the equidistant curves of $\ell_0$. Compute $I=\int_\Hh k(\mathrm{dist}(w,\ell_0))F(w)\dd\mu$ in two ways:
\begin{itemize}
\item by Lemma~\ref{average} and the evenness of $k$, $I=\hat h(\frac14+t^2)\int(2\cosh\sigma)^{-s}\dd\sigma$;
\item by Fubini and Fact~\ref{F:pretrace}(a), $I=h(t)\int(2\cosh\sigma)^{-s}\dd\sigma$.
\end{itemize}
Finally $\int_\R(\cosh\sigma)^{-s}\dd\sigma=\sqrt\pi\Gamma(\frac s2)/\Gamma(\frac{s+1}2)\ne0$.
\end{proof}

Cusp forms have $\frac14+t_j^2>0$, so $\mu$ is real in Lemma~\ref{hbounds}(i). Hence
\begin{equation}\label{hbound}
|h(t_j)|\ll\delta\min\big(1,(\delta|t_j|)^{-2}\big)
\end{equation}
for all $j$, exceptional or not, and the same bound holds for $h(t)$, $t\in\R$.

\section{Square discriminants}\label{sec:square}
In this section $D=n^2$, so $\delta=M/n$, and $G\in\{\Gamma,\Gt\}$ with $n\in\mathcal A_G$. We write $\Qf_\Gamma=\Qf$ and $\Qf_\vartheta=\Qf^\vartheta$.

\subsection{Orbits}
\begin{lemma}\label{orbits}
\begin{enumerate}[label=(\roman*)]
\item The map $(C\bmod n,\gamma)\mapsto[0,n,C]\circ\gamma$ ($\gamma\in\PSL$) is a bijection onto $\Qf(n^2)$. Moreover, if $[0,n,C]\circ g'=[0,n,C']$ with $g'\in\PSL$, then $g'=T^k$ and $C'=C+nk$.
\item Let $n$ be odd. The map $(c\bmod n,\gamma)\mapsto[0,n,2c]\circ\gamma$ ($\gamma\in\Gt$) is a bijection onto $\Qf^\vartheta(n^2)$.
\end{enumerate}
\end{lemma}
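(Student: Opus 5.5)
The plan is to work directly with the factorization of forms of square discriminant. Let $\fq=[A,B,C]\in\Qf(n^2)$. Since the discriminant is a perfect square, $\fq(x,1)$ has a rational root, so $\fq$ factors over $\Z$ as $(au+bv)(cu+dv)$. Primitivity of the individual factors is not needed. Choose a primitive vector $(p,q)$ with $\fq(p,q)=0$; this means $(p:q)$ is a rational root. Complete it to $\gamma\in\PSL$ with $\gamma(1,0)^T=(p,q)^T$, i.e. first column $(p,q)$. Then $\fq\circ\gamma$ has vanishing $u^2$-coefficient, so $\fq\circ\gamma=[0,B',C']$. The discriminant is preserved, so $B'^2=n^2$. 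Replacing $\gamma$ by $\gamma S^2$ doesn't help since we work in $\PSL$; instead I use the second root, or equivalently compose with $-I$ on one column via the choice of root. Concretely, the two roots give the two sign choices $B'=\pm n$: the form $[0,-n,C']$ has root vector $(1,0)$ and also $(n,C')$-type second root, and switching to the other root yields $+n$. So I would fix the convention that $(p,q)$ is the root for which $B'=+n$, checking that exactly one of the two roots gives $+n$. A form $[0,B',C']$ has roots $(1,0)$ and $(-C',B')$, and the swap relates them through an explicit matrix that flips the sign of $B'$. This gives surjectivity onto $[0,n,C]\circ\gamma$.

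Next I reduce $C$ modulo $n$ using $[0,n,C]\circ T^k=[0,n,C+nk]$. This is a direct computation: $u\mapsto u+kv$ sends $nuv+Cv^2$ to $nuv+(C+nk)v^2$.

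For injectivity, and for the ``moreover'' clause, suppose $[0,n,C]\circ g'=[0,n,C']$. The root $(1,0)$ of the right side maps under $g'$ to a root of $[0,n,C]$, namely $\pm(1,0)$ or $\pm(-C,n)/\gcd$. The sign convention, that the root carries $B'=+n$, rules out the second root. Then $g'$ fixes $\infty$, so $g'=T^k$, and comparing coefficients gives $C'=C+nk$. Hence the pair $(C\bmod n,\gamma)$ is determined by $\fq$.

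For (ii), with $n$ odd, I run the same argument inside $\Gt$ for $\fq\in\Qf^\vartheta(n^2)$. Here $A$ and $C$ are even and $B$ is odd. The $\Gt$-orbits of rational points are the two cusps, and I must choose the root so that $\gamma$ can be taken in $\Gt$. I expect this to be the main obstacle. I would show that exactly one root $(p:q)$ has $p\not\equiv q\bmod 2$, i.e. lies in the $\Gt$-orbit of $\infty$, and that it is the one giving $B'=+n$. The parity condition on $A$ and $C$ should force this, since $\fq\bmod 2=uv$ has roots $(1,0)$ and $(0,1)$ only. Once that is established, $\fq\circ\gamma=[0,n,C']$ with $\gamma\in\Gt$. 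Since $\Gt$ preserves $\Qf^\vartheta$ (check mod $2$: $uv$ is invariant under $I$ and under the swap), $C'$ is even, say $C'=2c$. Finally, the stabilizer of $\infty$ in $\Gt$ is $\langle T^2\rangle$, and $T^{2k}$ shifts $2c$ by $2nk$, so $c$ is determined modulo $n$.
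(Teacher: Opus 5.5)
\emph{Part (i).} Your part (i) is sound and is essentially the paper's argument phrased through roots instead of oriented factorisations. The paper orders $\fq=\ell_1\ell_2$ so that $\det(\ell_1,\ell_2)=n>0$ and takes a zero of $\ell_2$. That zero is exactly your ``$+n$ root'', and the identity $\lambda\alpha=n$ gives the sign at once. You need a separate check instead: for example, completing the root $(-C,n)/\gcd(C,n)$ of $[0,n,C]$ gives middle coefficient $-n$. Your injectivity step also needs the remark that the sign of $B'$ depends only on the root, not on the completion. This holds because completions differ by powers of $T$ and $[0,B',C']\circ T^k=[0,B',C'+kB']$. Drop the remark about composing with $-I$ on one column; that leaves $\mathrm{SL}_2$.

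\emph{Part (ii).} The genuine problem is here: the claim you set out to prove is false. Let $(p,q)$ be any primitive zero of $\fq\in\Qf^\vartheta(n^2)$. Then $pq\equiv\fq(p,q)=0\bmod 2$, and since $(p,q)$ is primitive, $p\not\equiv q\bmod 2$. So \emph{both} roots lie in the $\Gt$-orbit of $\infty$. The root $\equiv(0,1)$ gives the cusp $0=S\infty$, and $S\in\Gt$; you appear to be using the cusp description of $\Gamma_0(2)$. For example, $[0,-n,0]$ has roots $(1,0)$ and $(0,1)$, and its $+n$ root is $(0,1)$ (compose with $S$). So neither ``exactly one root lies in the orbit of $\infty$'' nor ``the root $\equiv(1,0)$ is the $+n$ root'' can be proved.

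\emph{Repair and comparison.} The anticipated obstacle does not exist. Take the $+n$ root from (i); it lies in the $\Gt$-orbit of $\infty$. Complete it to some $\gamma$ and multiply on the right by a suitable power of $T$, which leaves the first column fixed. This makes $\gamma\equiv I$ or $\equiv(\begin{smallmatrix}0&1\\1&0\end{smallmatrix})\bmod 2$, i.e.\ $\gamma\in\Gt$. The rest of your argument then goes through: $C'$ is even by $\Gt$-invariance, you reduce by $T^2$, and injectivity follows from (i) and $\mathrm{Stab}_{\Gt}(\infty)=\langle T^2\rangle$. The paper avoids choosing roots in (ii) altogether. By (i), $\fq=[0,n,C]\circ\gamma$ with $\gamma\in\PSL$. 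Replacing $(C,\gamma)$ by $(C-n,T\gamma)$ makes $C$ even, since $n$ is odd. Then $[0,n,C]\equiv uv\bmod 2$ is odd only at $(1,1)$. So the evenness of $\fq(1,0)$ and $\fq(0,1)$ forces neither column of $\gamma$ to be $\equiv(1,1)^T$, that is, $\gamma\in\Gt$.
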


\begin{proof}
\emph{(i), surjectivity.} A form $\fq$ of discriminant $n^2$ factors over $\Z$ as $\fq=\ell_1\ell_2$ into linear forms, ordered so that $\det(\ell_1,\ell_2)=n>0$. Let $(p,q)$ be a primitive zero of $\ell_2$ and let $\gamma_0\in\mathrm{SL}_2(\Z)$ have first column $(p,q)^T$. Then $\ell_2\circ\gamma_0=\lambda v$ and $\ell_1\circ\gamma_0=\alpha u+\beta v$, and the determinant condition gives $\lambda\alpha=n$. So $\fq\circ\gamma_0=[0,n,\lambda\beta]$, and translations $T^k$ change $\lambda\beta$ by multiples of $n$.

\emph{(i), the second statement.} Write $[0,n,C]=(nu+Cv)\cdot v$. The factorisation into linear forms is unique up to scalars and order, composition with $g'$ preserves the determinant of the ordered pair, and swapping the factors gives determinant $-n$. Hence $v\circ g'=\pm v$, so $g'=\pm T^k$, and then $C'=C+nk$. This also gives injectivity.

\emph{(ii), invariance.} The parities of $\fq(1,0)=A$ and $\fq(0,1)=C$ for $\fq\circ\gamma$ depend only on $\gamma\bmod2$. For $\gamma\equiv I$ they are unchanged, and for $\gamma\equiv\left(\begin{smallmatrix}0&1\\1&0\end{smallmatrix}\right)$ they are swapped. Hence $\Qf^\vartheta(n^2)$ is $\Gt$-invariant, and it contains $[0,n,2c]$.

\emph{(ii), surjectivity.} Let $\fq\in\Qf^\vartheta(n^2)$. By (i), $\fq=[0,n,C]\circ\gamma$ with $\gamma\in\PSL$. Since $[0,n,C]=[0,n,C-n]\circ T$, we may assume $C$ even. Then $[0,n,C]\equiv uv\bmod 2$, which is odd only at $(u,v)\equiv(1,1)$. Hence $\fq(1,0)$ and $\fq(0,1)$ are even if and only if neither column of $\gamma$ is $\equiv(1,1)^T$. So the columns are $(1,0)^T,(0,1)^T$ in some order, i.e.\ $\gamma\in\Gt$.

\emph{(ii), injectivity.} If $[0,n,2c]\circ\gamma=[0,n,2c']\circ\gamma'$ with $\gamma,\gamma'\in\Gt$, then by (i) $\gamma\gamma'^{-1}=T^k$ and $2c'=2c+nk$. Since $T^k\in\Gt$ only for even $k$, we get $c'\equiv c\bmod n$. For fixed representatives this forces $k=0$, and then $\gamma=\gamma'$.
\end{proof}

For $b\in\R$ let $\ell_b=\{z:\Re z=b/n\}$.

\begin{cor}\label{orbitsum}
$\displaystyle N_G(n^2)=\sum_{b\in B_G(n)}\ \sum_{\gamma\in G}k\big(\mathrm{dist}(\gamma i,\ell_b)\big)$.
\end{cor}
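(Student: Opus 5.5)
We combine Corollary~\ref{geosum} with the orbit description of Lemma~\ref{orbits} and the equivariance of Lemma~\ref{dist}(ii). By Corollary~\ref{geosum}, $N_G(n^2)=\sum_{\fq\in\Qf_G(n^2)}k(\mathrm{dist}(i,\ell_\fq))$. Lemma~\ref{orbits} parametrises $\Qf_G(n^2)$ bijectively by pairs (residue class, $\gamma\in G$). The relevant base forms are $\fq_b=[0,n,b]$, with $b$ running over $B_\Gamma(n)$ for $G=\Gamma$. For $G=\Gt$ we take $b=2c$ with $c\bmod n$, so that $b$ runs over the even residues modulo $2n$, which is exactly $B_\vartheta(n)$. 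Each form is then $\fq_b\circ\gamma$ with $(b,\gamma)$ unique.

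Next we identify the geodesics. We write $\fq_b(u,v)=(nu+bv)\cdot v$, i.e.\ $\mu=\left(\begin{smallmatrix}n&b\\0&1\end{smallmatrix}\right)$ with $\det\mu=n$. By Lemma~\ref{dist}(iii), $\ell_{\fq_b}=\mu^{-1}(i\R_{>0})$, and $\mu^{-1}z=(z-b)/n$. This gives the line $\Re z=-b/n$, which is $\ell_{-b}$. By Lemma~\ref{dist}(ii), $\ell_{\fq_b\circ\gamma}=\gamma^{-1}\ell_{\fq_b}$. Since $\gamma$ is an isometry,
\[
\mathrm{dist}(i,\gamma^{-1}\ell_{-b})=\mathrm{dist}(\gamma i,\ell_{-b}).
\]
Summing over the bijection therefore gives $N_G(n^2)=\sum_{b}\sum_{\gamma\in G}k(\mathrm{dist}(\gamma i,\ell_{-b}))$.

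It remains to replace $-b$ by $b$. We could use the reflection $R$, which normalises $G$, fixes $i$, and maps $\ell_{-b}$ to $\ell_b$. Then $\mathrm{dist}(\gamma i,\ell_{-b})=\mathrm{dist}(R\gamma R^{-1} i,\ell_b)$, and $\gamma\mapsto R\gamma R^{-1}$ permutes $G$. Alternatively, $b\mapsto -b$ permutes $B_G(n)$ modulo the translations in $G$. For $\Gamma$ this translation is $T$; for $\Gt$ it is $T^2$, which shifts $\ell_b$ to $\ell_{b+2n}$. This shift is harmless after re-indexing $\gamma$, because $T^{2}\in\Gt$ and $\mathrm{dist}(\gamma i,\ell_{b+2n})=\mathrm{dist}(T^{-2}\gamma i,\ell_b)$.

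The only point needing care is this last bookkeeping. The summand must depend only on the residue class of $b$ modulo $n$ (resp.\ even $b$ modulo $2n$), and the sum over $\gamma\in G$ absorbs the translation. Everything else is a direct substitution. Absolute convergence is automatic, since $k$ has compact support and only finitely many $\gamma i$ lie within distance $r_0$ of a strip fundamental domain for the translations; in any case the identity is a rearrangement of the finite sum $N_G(n^2)$.
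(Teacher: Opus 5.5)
Your proof is correct and follows the paper's argument: combine Corollary~\ref{geosum}, Lemma~\ref{orbits} and Lemma~\ref{dist}(ii), and note that the geodesic of $[0,n,C]$ is $\ell_{-C}$. The paper handles the sign flip simply by re-indexing $b=-C$ (resp.\ $b=-2c$, which runs over the even residues modulo $2n$); this is your second option, and your reflection argument via $R$ is an equally valid alternative.
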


\begin{proof}
Combine Corollary~\ref{geosum}, Lemma~\ref{orbits} and Lemma~\ref{dist}(ii). The geodesic of $[0,n,C]=(nu+Cv)v$ is $\ell_{-C}$. For $G=\Gt$, as $c$ runs modulo $n$, $b=-2c$ runs over the even residues modulo $2n$.
\end{proof}

\subsection{Reduction to one geodesic}
Let $K$ be as in Lemma~\ref{kernel}, and put
\[
F_G(w)=K_G(i,w)=\sum_{\gamma\in G}K(d(\gamma i,w)).
\]

\begin{lemma}\label{vanish}
\begin{enumerate}[label=(\roman*)]
\item If $F_G(w)\ne0$, then $\Im(\gamma w)\le e^{r_0}$ for all $\gamma\in\PSL$.
\item For $d$ coprime to $w_G$, $(T^G_dF_G)(iy)=0$ unless $(de^{r_0})^{-1}\le y\le de^{r_0}$, and $(T^G_dF_G)(i/y)=(T^G_dF_G)(iy)$.
\end{enumerate}
\end{lemma}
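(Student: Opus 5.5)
Both parts come down to the support of $K$, namely $K(d)=0$ for $d>r_0$ (Lemma~\ref{kernel}), together with the fact that $i$ has small height everywhere in its $\PSL$-orbit.

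\emph{(i).} If $F_G(w)\ne0$, some term is nonzero, so there is $\gamma_1\in G$ with $d(\gamma_1 i,w)\le r_0$. Take any $\gamma\in\PSL$. Since $\gamma$ is an isometry, $d(\gamma\gamma_1 i,\gamma w)\le r_0$. Every point of the $\PSL$-orbit of $i$ has imaginary part at most $1$: for $\gamma'=(\begin{smallmatrix}a&b\\c&d\end{smallmatrix})$ we have $\Im(\gamma' i)=1/(c^2+d^2)\le1$, as $(c,d)\ne(0,0)$ are integers. Along a geodesic the quantity $\log\Im$ changes by at most the hyperbolic length, so $|\log\Im z-\log\Im z'|\le d(z,z')$. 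Hence $\Im(\gamma w)\le e^{r_0}\Im(\gamma\gamma_1 i)\le e^{r_0}$.

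\emph{(ii), support.} Write $(T^G_dF_G)(iy)=\sum F_G\big((aiy+b)/d'\big)$ with $ad'=d$ and $b\in B_G(d')$. A nonzero term forces $w=(aiy+b)/d'$ into the support of $F_G$, and $\Im w=ay/d'$.
\begin{itemize}
\item Apply (i) with $\gamma=I$. This gives $ay/d'\le e^{r_0}$, hence $y\le e^{r_0}d'/a\le de^{r_0}$.
\item For the lower bound, choose $\gamma\in\PSL$ with bottom row $(c,e)$, where $c=d'/g$, $e=-b/g$ and $g=\gcd(b,d')$. Such a $\gamma$ exists since $(c,e)$ is coprime. Then $cw+e=aiy/g$, and so
\[
\Im(\gamma w)=\frac{\Im w}{|cw+e|^2}=\frac{ay/d'}{a^2y^2/g^2}=\frac{g^2}{ad'y}.
\]
By (i) this is at most $e^{r_0}$, so $y\ge g^2/(de^{r_0})\ge (de^{r_0})^{-1}$.
\end{itemize}
The only point needing care is finding the right $\gamma$. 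It is simply the matrix sending the cusp $-b/d'$ (up to reduction) to $\infty$, which is exactly where $w$ would otherwise be high.

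\emph{(ii), symmetry.} Note that $i/y=S(iy)$ and $S\in G$ for both groups. Apply Lemma~\ref{hecke}(iv) to $F_G(w)=\sum_{\gamma\in G}K(d(\gamma i,w))$, which gives
\[
T^G_dF_G(w)=\sum_{\mu\in M_d^G}K\big(d(i,\mu w)\big).
\]
This function is $G$-invariant by Lemma~\ref{hecke}(ii). Since $S\in G$, it is in particular invariant under $w\mapsto Sw$, so $T^G_dF_G(i/y)=T^G_dF_G(S(iy))=T^G_dF_G(iy)$.

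Taken together, the support claim in (ii) is the main step, and its crux is the choice of $\gamma$ in the lower bound; the symmetry is immediate from $S\in G$.
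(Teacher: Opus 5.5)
Your proof is correct. Part (i) and the symmetry claim are argued as in the paper; the difference lies in the support bound of (ii).

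The paper passes to the unfolded form $(T^G_dF_G)(iy)=\sum_{\mu\in M^G_d}K(d(i,\mu iy))$ of Lemma~\ref{hecke}(iv). A nonzero term then gives $e^{-r_0}\le\Im\mu(iy)\le e^{r_0}$. From $\Im\mu(iy)=dy/(c^2y^2+e^2)$, with a case split on $c=0$ or $c\ne0$, the paper extracts only the upper bound $y\le de^{r_0}$. It then reads off the lower bound from the $S$-symmetry, which has to be proved anyway.

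You instead keep the upper-triangular coset representatives and apply (i) twice: once with $\gamma=I$, and once with a $\gamma$ sending the cusp $-b/d'$ to $\infty$. This gives both bounds directly, and even the sharper termwise bound $g^2/(de^{r_0})\le y\le e^{r_0}d'/a$.

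So your support statement is independent of the symmetry and reuses (i). The paper's version is shorter because the symmetry does double duty.

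One small point: your appeal to Lemma~\ref{hecke}(iv) in the symmetry step is not needed. $G$-invariance from Lemma~\ref{hecke}(ii) together with $S\in G$ suffices.
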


\begin{proof}
\emph{(i)} We have $d(\gamma_0i,w)\le r_0$ for some $\gamma_0\in G$. For $\gamma\in\PSL$, $\Im(\gamma\gamma_0i)\le1$, and $\Im$ changes at most by the factor $e^{r_0}$ over a distance $r_0$.

\emph{(ii), support.} By Lemma~\ref{hecke}(iv) and $M^G_d\subset M_d$: if $K(d(i,\mu iy))\ne0$, then $e^{-r_0}\le\Im\mu(iy)\le e^{r_0}$. For $\mu=(\begin{smallmatrix}a&b\\c&e\end{smallmatrix})$ we have $\Im\mu(iy)=dy/(c^2y^2+e^2)$. This is $\le d/y$ if $c\ne0$, and $\ge y/d$ if $c=0$; in both cases $y\le de^{r_0}$.

\emph{(ii), symmetry.} $T^G_dF_G$ is $G$-invariant and $S\in G$. This also gives the lower bound for $y$.
\end{proof}

\begin{proposition}\label{reduction}
For $n\in\mathcal A_G$,
\[
N_G(n^2)=\sum_{d\mid n}\mu\Big(\frac nd\Big)\mathcal G_G(d),\qquad \mathcal G_G(d):=\int_0^\infty(T^G_dF_G)(iy)\frac{\dd y}y,
\]
where the integral runs over a compact range by Lemma~\ref{vanish}.
\end{proposition}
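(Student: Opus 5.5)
The plan is to unfold both sides into line integrals of $F_G$ along the vertical geodesics $\ell_b$, and then to compare them by Möbius inversion over the denominators of the base points $b/d'$.

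\emph{The left-hand side.} Start from Corollary~\ref{orbitsum}. By Lemma~\ref{kernel}, applied with $z=\gamma i$ and $\ell=\ell_b$,
\[
k\big(\mathrm{dist}(\gamma i,\ell_b)\big)=\int_{\ell_b}K(d(\gamma i,w))\dd s(w).
\]
Sum over $\gamma\in G$ and interchange sum and integral. This is justified because $K$ has compact support, so the sum over $\gamma$ is locally finite. Moreover only finitely many $\gamma$ contribute to each line, since by Corollary~\ref{geosum} only finitely many forms have $|A+C|\le 2M$. One obtains
\[
N_G(n^2)=\sum_{b\in B_G(n)}\int_{\ell_b}F_G\dd s.
\]
On $\ell_b$ the arc length is $\dd y/y$.

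\emph{The right-hand side.} Expand $T^G_dF_G$ by its definition. The term with $ad'=d$ and $b\in B_G(d')$ is $F_G((a\,iy+b)/d')$. Substituting $y\mapsto d'y/a$ turns $\int_0^\infty\cdot\,\dd y/y$ into the integral of $F_G$ over the vertical line $\Re z=b/d'$, again with respect to $\dd y/y=\dd s$. The integrals are over compact ranges by Lemma~\ref{vanish}, so all quantities are finite. Put $w=w_G$, and for $m$ coprime to $w$ define
\[
S_G(m)=\sum_{x\in(w/m)\Z/w\Z}\int_{\Re z=x}F_G\dd s .
\]
This is well defined, because $F_G$ is $w$-periodic ($T\in\Gamma$, resp.\ $T^2\in\Gt$). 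For $G=\Gamma$ the set $\{b/d':b\bmod d'\}$ is $(1/d')\Z/\Z$. For $G=\Gt$ the even $b$ modulo $2d'$ give $\{b/d'\}=(2/d')\Z/2\Z$. Hence the term for fixed $d'$ equals $S_G(d')$, independently of $a$, and
\[
\mathcal G_G(d)=\sum_{d'\mid d}S_G(d')
\]
for every $d$ coprime to $w_G$. In the theta case every divisor of $n$ is odd, since $n$ is odd.

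\emph{Conclusion.} The same identification with $d'=n$ shows that the left-hand side is exactly $S_G(n)$. Möbius inversion of $\mathcal G_G(d)=\sum_{d'\mid d}S_G(d')$ over the divisors of $n$ (all of which are admissible) gives
\[
S_G(n)=\sum_{d\mid n}\mu(n/d)\mathcal G_G(d),
\]
which is the claim. The only delicate point is the bookkeeping in the theta case: one must check that $B_\vartheta(d')$ produces exactly one representative of each point of $(2/d')\Z$ modulo the period $2$ of $F_\vartheta$. Everything else is the routine unfolding and interchange described above.
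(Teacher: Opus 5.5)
Your proposal is correct and follows the paper's proof. Your $S_G(m)$ is exactly the paper's $J_G(m)$, and your three steps are the paper's three steps: identifying $N_G(n^2)$ with the line integrals at level $n$ via Corollary~\ref{orbitsum} and Lemma~\ref{kernel}, expanding $T^G_d$ to obtain $\mathcal G_G(d)=\sum_{d'\mid d}J_G(d')$, and applying Möbius inversion. Describing the base points intrinsically as $(w_G/m)\Z/w_G\Z$ is only a repackaging, though it makes the independence of $a$ a little more visible.
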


\begin{proof}
\emph{Step 1: integrals along geodesics.} For $m\in\mathcal A_G$ let $J_G(m)=\sum_{b\in B_G(m)}\int_0^\infty F_G(\frac{iy+b}m)\frac{\dd y}y$. Each integral runs over a compact range, by Lemma~\ref{vanish}(i) applied near $\infty$ and near the rational point $b/m$. By Corollary~\ref{orbitsum} and Lemma~\ref{kernel},
\[
N_G(n^2)=\sum_{b\in B_G(n)}\int_{\ell_b}\sum_{\gamma}K(d(\gamma i,w))\dd s(w)=J_G(n),
\]
where the interchange is justified since only finitely many $\gamma$ contribute at each $w$.

\emph{Step 2: Hecke operators.} Substituting $y\mapsto y/a$,
\[
\int_0^\infty(T^G_mF_G)(iy)\frac{\dd y}y=\sum_{ad'=m}\ \sum_{b\in B_G(d')}\int_0^\infty F_G\Big(\frac{aiy+b}{d'}\Big)\frac{\dd y}y=\sum_{d'\mid m}J_G(d').
\]

\emph{Step 3.} Möbius inversion.
\end{proof}

\subsection{Periods of cusp forms}\label{sec:sqper}
Let $u=u_j$ be even under $R$. Its Fourier expansion at the cusp $\infty$ is $u(w_Gz)=\sqrt y\sum_{m\ne0}\rho_j(m)K_{it_j}(2\pi|m|y)e(mx)$ with $\rho_j(-m)=\rho_j(m)$.
\begin{itemize}
\item For $G=\Gamma$, Fact~\ref{F:LS} applies with $q=1$ and $\phi_j=u_j$.
\item For $G=\Gt$, the functions $\phi_j=u_j\circ g$ form an orthonormal basis for $\Gamma_0(2)$ (Lemma~\ref{conj}). Since $\phi_j(z)=u_j(2z+1)$, their coefficients are $(-1)^m\rho_j(m)$, so Fact~\ref{F:LS} applies with $q=2$ (the signs are absorbed into $a_m$).
\end{itemize}

\begin{lemma}\label{coeff}
$|\rho_j(m)|\ll_\varepsilon\cosh(\pi t_j)^{1/2}(1+|t_j|+|m|^{1/2+\varepsilon})$. Consequently, for $y\ge1$,
\[
|u_j(iy)|\ll e^{\pi|t_j|/2}(1+|t_j|)e^{-\pi y},
\]
and $\int_Y^\infty|u_j(iy)|\frac{\dd y}y\ll e^{-Y}$ whenever $Y\ge\max(10,10|t_j|)$.
\end{lemma}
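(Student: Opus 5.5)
The plan is to deduce the coefficient bound from the large sieve (Fact~\ref{F:LS}) with a single nonzero coefficient, and then to read off the decay of $u_j(iy)$ from the Fourier expansion together with uniform bounds for $K_{it}$.

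\emph{Coefficient bound.} Take $K=\max(1,|t_j|)$ and $N=|m|/2$ (allowed, since $N\ge\frac12$), with $a_{|m|}=1$ and all other $a$ zero. The sum over $|t_l|\le K$ contains the term $l=j$, so
\[
\frac{|\rho_j(m)|^2}{\cosh(\pi t_j)}\ll_\varepsilon K^2+|m|^{1+\varepsilon}.
\]
Taking square roots gives $|\rho_j(m)|\ll\cosh(\pi t_j)^{1/2}(1+|t_j|+|m|^{1/2+\varepsilon})$. For $m<0$ use $\rho_j(-m)=\rho_j(m)$. For $\Gt$ apply this to $\phi_j=u_j\circ g$ with $q=2$; the coefficients differ only by signs, as noted above. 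The exceptional $t_j\in i(0,\frac12)$ are covered as well, since $\cosh(\pi t_j)$ is then bounded below. Here I assume Fact~\ref{F:LS} includes them, as it does in \cite{Pa}.

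\emph{Pointwise decay.} We have $u_j(iy)=\sqrt{y'}\sum_{m\ne0}\rho_j(m)K_{it_j}(2\pi|m|y')$ with $y'=y/w_G\ge\frac12$. I need a uniform bound for $K_{it}(x)$ when $x\ge\pi$. From $K_{it}(x)=\int_0^\infty e^{-x\cosh u}\cos(tu)\dd u$ (and $\cosh(\Im t\,u)\le e^{u/2}$ in the exceptional case) one gets $|K_{it}(x)|\le K_0(x)\ll e^{-x}x^{-1/2}$ for real $t$, with a similar bound in the exceptional case. This bound is uniform in $t$, but it loses the factor $e^{-\pi|t|/2}$ that would cancel $\cosh(\pi t_j)^{1/2}$.

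The target explicitly has $e^{\pi|t_j|/2}$, so the crude bound is enough. Multiplying it by the coefficient bound $e^{\pi|t_j|/2}(1+|t_j|)|m|^{1/2+\varepsilon}$ and summing over $m$ gives
\[
\sqrt{y}\sum_{m\ge1}e^{-2\pi my'}|m|^{1/2+\varepsilon}\ll e^{\pi|t_j|/2}(1+|t_j|)e^{-2\pi y'}
\]
for $y'\ge\frac12$. For $G=\Gamma$ this gives $e^{-2\pi y}\le e^{-\pi y}$. For $\Gt$ we have $y'=y/2$, so it gives exactly $e^{-\pi y}$. The $\sqrt y$ is absorbed because $y\,e^{-2\pi y}\ll e^{-\pi y}$ for $\Gamma$; for $\Gt$ one should keep a little room, for example via $e^{-2\pi y'(1-\eta)}$, or note the factor $x^{-1/2}$ in the $K_0$ bound. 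This bookkeeping of $\sqrt y$ against the factor $2$ in $w_G$ is the one point that needs care.

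\emph{Tail integral.} Integrating gives $\int_Y^\infty|u_j(iy)|\frac{\dd y}y\ll e^{\pi|t_j|/2}(1+|t_j|)e^{-\pi Y}$. When $Y\ge10|t_j|$ we have $\pi|t_j|/2\le\pi Y/20$, and $1+|t_j|\ll e^{Y/10}$. Hence the whole expression is $\ll e^{-(\pi-\pi/20-1/10)Y}\le e^{-Y}$, with room to absorb the constants since $Y\ge10$.

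The main obstacle is only the uniformity of the $K_{it}$ bound, including the exceptional $t_j$. The crude integral representation avoids any delicate transition-range asymptotics.
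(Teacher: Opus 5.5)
Your argument is the paper's argument. It uses the large sieve with a single coefficient for $\rho_j(m)$, a $t$-uniform majorant for $K_{it_j}$ from the integral representation, summation over $m$, and the same tail estimate. The paper takes the single majorant $K_{1/2}(x)=\sqrt{\pi/(2x)}\,e^{-x}$, which covers real and exceptional $t_j$ at once.

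The one step to fix is the one you flagged yourself. Your displayed estimate keeps $\sqrt y$ in front but drops the factor $(2\pi m y')^{-1/2}$, and as written it is false. Your alternative remedy via $e^{-2\pi y'(1-\eta)}$ gives only $e^{-(1-\eta)\pi y}$ for $\Gt$. That is enough for the tail integral but not for the stated pointwise bound.

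Commit instead to keeping the $x^{-1/2}$ from the Bessel bound. It cancels $\sqrt{y'}$ exactly:
\[
\sqrt{y'}\,|K_{it_j}(2\pi|m|y')|\le\tfrac12|m|^{-1/2}e^{-2\pi|m|y'}\le\tfrac12|m|^{-1/2}e^{-\pi|m|y},
\]
since $w_G\le2$. Hence $|u_j(iy)|\ll\sum_{m\ge1}|\rho_j(m)|m^{-1/2}e^{-\pi my}$, and inserting the coefficient bound gives exactly $e^{\pi|t_j|/2}(1+|t_j|)e^{-\pi y}$ for $y\ge1$, as in the paper.
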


\begin{proof}
\emph{Coefficients.} Apply Fact~\ref{F:LS} with $K=1+|t_j|$, a single $m$, and drop the other terms by positivity.

\emph{Bessel bound.} From $K_{\nu}(x)=\int_0^\infty e^{-x\cosh r}\cosh(\nu r)\dd r$ and $|\cosh(\nu r)|\le\cosh(r/2)$ for $\nu\in i\R$ and for $\nu\in(-\frac12,\frac12)$, we get $|K_{it_j}(x)|\le K_{1/2}(x)=\sqrt{\pi/(2x)}e^{-x}$.

\emph{Pointwise bound.} Hence
\[
|u(iy)|\le\sqrt{y/w_G}\sum_{m\ne0}|\rho_j(m)|\,|K_{it_j}(2\pi|m|y/w_G)|\ll\sum_{m\ge1}|\rho_j(m)|m^{-1/2}e^{-\pi my},
\]
using $w_G\le2$. Insert the coefficient bound.

\emph{Tail.} For $y\ge Y\ge10|t_j|$ we have $e^{\pi|t_j|/2-\pi y}\le e^{-(19\pi/20)y}$, and $1+|t_j|\le y$.
\end{proof}

Let $\gamma_t(s)=(w_G/\pi)^s\Gamma(\frac{s+it}2)\Gamma(\frac{s-it}2)$.

\begin{lemma}\label{mellin}
$M_j(u)=\int_0^\infty u_j(iy)y^u\frac{\dd y}y$ is entire, bounded in vertical strips, and satisfies $M_j(u)=M_j(-u)$. For $\Re u>\frac32$,
\[
M_j(u)=\frac1{2\sqrt{w_G}}\Lambda_j(u+\tfrac12),\qquad \Lambda_j(s)=\gamma_{t_j}(s)\sum_{m\ge1}\rho_j(m)m^{-s}.
\]
In particular $\J_j=M_j(0)=\Lambda_j(\frac12)/(2\sqrt{w_G})$, where $\Lambda_j$ is entire with $\Lambda_j(s)=\Lambda_j(1-s)$. For $G=\Gamma$ this gives $|\J_j|^2=\frac{|\rho_j(1)|^2}{4\pi}|\Gamma(\frac14+\frac{it_j}2)|^4L(\frac12,u_j)^2$.
\end{lemma}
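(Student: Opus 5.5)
The plan is to treat this as the standard Hecke--Jacquet--Langlands argument, carried out on the imaginary axis with the bounds of Lemma~\ref{coeff}.

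\emph{Entireness and boundedness.} Split $M_j(u)=\int_0^1+\int_1^\infty$. By Lemma~\ref{coeff}, $u_j(iy)$ decays like $e^{-\pi y}$ as $y\to\infty$, so $\int_1^\infty u_j(iy)y^u\frac{\dd y}y$ is entire and bounded in every vertical strip. For the range $0<y<1$ we use that $S\in G$ for both groups, so $u_j(i/y)=u_j(iy)$. The substitution $y\mapsto1/y$ then turns $\int_0^1$ into $\int_1^\infty u_j(iy)y^{-u}\frac{\dd y}y$. Hence
\[
M_j(u)=\int_1^\infty u_j(iy)(y^u+y^{-u})\frac{\dd y}y,
\]
which is entire, bounded in vertical strips, and visibly even in $u$.

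\emph{Dirichlet series identity.} Since $u_j$ is even, $\rho_j(-m)=\rho_j(m)$. At $x=0$ the Fourier expansion therefore reads
\[
u_j(iy)=2\sqrt{y/w_G}\sum_{m\ge1}\rho_j(m)K_{it_j}(2\pi my/w_G),
\]
where we put $z=iy/w_G$ in $u(w_Gz)$. For $\Re u>\frac32$ the bound $\rho_j(m)\ll m^{1/2+\varepsilon}$ from Lemma~\ref{coeff} lets us integrate term by term. We use the Mellin transform
\[
\int_0^\infty K_{it}(2\pi my/w_G)\,y^{u+1/2}\frac{\dd y}y=\frac14\Big(\frac{w_G}{\pi m}\Big)^{s}\Gamma\Big(\frac{s+it}2\Big)\Gamma\Big(\frac{s-it}2\Big),\qquad s=u+\tfrac12,
\]
which is the standard $\int_0^\infty K_\nu(x)x^{s}\frac{\dd x}x=2^{s-2}\Gamma(\frac{s+\nu}2)\Gamma(\frac{s-\nu}2)$ after rescaling. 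Multiplying by the prefactor $2w_G^{-1/2}$ gives $M_j(u)=\frac{1}{2\sqrt{w_G}}\gamma_{t_j}(s)\sum\rho_j(m)m^{-s}$. This is the stated identity.

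\emph{Consequences.} Because $M_j$ is entire and even, $\Lambda_j(s)=2\sqrt{w_G}M_j(s-\frac12)$ extends to an entire function with $\Lambda_j(s)=\Lambda_j(1-s)$, and $\J_j=M_j(0)$.

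\emph{Level one formula.} For $G=\Gamma$, Hecke theory gives $\rho_j(m)=\rho_j(1)\lambda_j(m)$, so $\sum\rho_j(m)m^{-s}=\rho_j(1)L(s,u_j)$. Evaluating at $s=\frac12$ with $w_G=1$ and $\gamma_t(\frac12)=\pi^{-1/2}|\Gamma(\frac14+\frac{it}2)|^2$ gives
\[
\J_j=\frac{\rho_j(1)}{2\sqrt\pi}\big|\Gamma(\tfrac14+\tfrac{it_j}2)\big|^2L(\tfrac12,u_j).
\]
Here $L(\frac12,u_j)$ is real because $\lambda_j$ is real. Squaring yields the formula for $|\J_j|^2$.

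The one point needing care is the exceptional case $t_j\in i(0,\frac12)$, where the Gamma factors are not conjugate. Then $|\Gamma(\frac14+\frac{it_j}{2})|^2$ must be read as $\Gamma(\frac14+\frac{it_j}2)\Gamma(\frac14-\frac{it_j}2)$, which is real and positive. The convergence of the term-by-term Mellin transform for $\Re u>\frac32$ still holds, because $K_{it}$ is regular there. Apart from these normalisation issues I expect no real obstacle.
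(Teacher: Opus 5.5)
Your proposal is correct and follows essentially the same route as the paper. $S$-invariance gives evenness and exponential decay at both ends of $i\R_{>0}$, the $K$-Bessel expansion is Mellin-transformed term by term (justified by Lemma~\ref{coeff}), and $\rho_j(m)=\rho_j(1)\lambda_j(m)$ yields the level-one formula for $|\J_j|^2$; your caveat about exceptional $t_j$ is harmless but moot for $G=\Gamma$, which has none.
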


\begin{proof}
\emph{Analytic properties.} $u_j(i/y)=u_j(S\,iy)=u_j(iy)$, so $M_j(u)=M_j(-u)$. By Lemma~\ref{coeff}, $u_j(iy)$ decays exponentially as $y\to\infty$, and by symmetry as $y\to0$. Hence $M_j$ is entire and bounded in vertical strips.

\emph{Mellin transform.} For $\Re u>\frac32$ insert
\[
u_j(iy)=2\sqrt{y/w_G}\sum_{m\ge1}\rho_j(m)K_{it_j}(2\pi my/w_G)
\]
and use $\int_0^\infty K_{it}(x)x^\beta\frac{\dd x}x=2^{\beta-2}\Gamma(\frac{\beta+it}2)\Gamma(\frac{\beta-it}2)$. The interchange is justified by Lemma~\ref{coeff}. The term $m$ gives $\frac12w_G^{u}\pi^{-u-1/2}\Gamma(\frac{u+\frac12+it}2)\Gamma(\frac{u+\frac12-it}2)\rho_j(m)m^{-u-1/2}$.

\emph{The case $G=\Gamma$.} Here $\rho_j(m)=\rho_j(1)\lambda_j(m)$, so $\Lambda_j(\frac12)=\pi^{-1/2}|\Gamma(\frac14+\frac{it_j}2)|^2\rho_j(1)L(\frac12,u_j)$.
\end{proof}

\begin{proposition}\label{periodmean}
For $T\ge1$ and $\varepsilon>0$,
\[
\sum_{T<|t_j|\le2T}|\J_j|^2\ll_\varepsilon T^{1+\varepsilon},\qquad \sum_{|t_j|\le1}|\J_j|^2\ll1,
\]
where the sums run over even $u_j$.
\end{proposition}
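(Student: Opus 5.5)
The plan is to express $\J_j$ through a short smoothed sum of Fourier coefficients, and then apply the large sieve (Fact~\ref{F:LS}) to that sum. By Lemma~\ref{mellin}, $\J_j=\Lambda_j(\frac12)/(2\sqrt{w_G})$, and $\Lambda_j$ is entire with $\Lambda_j(s)=\Lambda_j(1-s)$. We use an approximate functional equation. Take a smooth test function $V$, let $G(u)=e^{u^2}$ in the contour, and shift the line of integration in
\[
\frac1{2\pi i}\int_{(2)}\Lambda_j(\tfrac12+u)\frac{G(u)}u\dd u .
\]
Since the integrand is odd under the functional equation $u\mapsto-u$, this gives
\[
\Lambda_j(\tfrac12)=2\sum_{m\ge1}\frac{\rho_j(m)}{\sqrt m}W_{t_j}(m),\qquad W_t(x)=\frac1{2\pi i}\int_{(2)}\gamma_t(\tfrac12+u)x^{-u}\frac{G(u)}u\dd u .
\]
By Stirling, $|\gamma_t(\frac12+u)|\asymp|\Gamma(\frac14+\frac{it}2)|^2(1+|t|)^{\Re u}$ in bounded strips with $|\Im u|\ll\log T$. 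Hence $W_t(x)\ll|\Gamma(\frac14+\frac{it}2)|^2(1+x/|t|)^{-A}$, so the sum is effectively over $m\le T^{1+\varepsilon}$. The factor $|\Gamma(\frac14+\frac{it}2)|^4\asymp e^{-\pi|t|}|t|^{-1}$ pairs with $\cosh(\pi t_j)^{-1}$ in Fact~\ref{F:LS}.

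The $t_j$-dependence of $W_{t_j}$ must be removed before applying the sieve. Write
\[
\gamma_{t}(\tfrac12+u)=\gamma_{t}(\tfrac12)\cdot\frac{\gamma_t(\frac12+u)}{\gamma_t(\frac12)},
\]
and keep $u$ on the line $\Re u=\varepsilon$. The ratio is $\ll T^{\varepsilon}e^{-c|\Im u|^2}$ after the factor $G(u)$. So
\[
|\J_j|^2\ll|\Gamma(\tfrac14+\tfrac{it_j}2)|^4\Big(\int_{(\varepsilon)}|G(u)/u|\,T^{\varepsilon}\Big|\sum_{m}\frac{\rho_j(m)}{m^{1/2+u}}\omega(m)\Big|\,|\dd u|\Big)^2,
\]
where $\omega$ is a smooth cutoff at $m\le T^{1+\varepsilon}$. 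The tail beyond the cutoff is negligible by shifting $\Re u$ to the right. A dyadic decomposition $N<m\le2N$, Cauchy--Schwarz in $u$ and in the $O(\log T)$ dyadic ranges, and Fact~\ref{F:LS} with $K=2T$ and $a_m=m^{-1/2-u}$ then give
\[
\sum_{T<|t_j|\le2T}|\J_j|^2\ll T^{\varepsilon}\,T^{-1}\sum_{N\le T^{1+\varepsilon}}(T^2+N^{1+\varepsilon})\sum_{N<m\le2N}\frac1m\ll T^{1+\varepsilon}.
\]

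For $|t_j|\le1$ we use instead the bound $\J_j\ll1$ uniformly. This follows from Lemma~\ref{coeff}, since $\int_1^{\infty}|u_j(iy)|\dd y/y\ll1$, and symmetry covers $y<1$. The number of such $j$ is bounded by Weyl's law, which is Fact~\ref{F:weyl} integrated over a fundamental domain, or simply the discreteness of the spectrum. For $G=\Gt$ the same argument applies to the $\phi_j=u_j\circ g$ with $q=2$, the signs $(-1)^m$ being absorbed into $a_m$ as the paper notes. For exceptional $t_j\in i(0,\frac12)$ (these are bounded), the small-$t$ case covers them.

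I expect the main obstacle to be the uniform control of the gamma ratio $\gamma_t(\frac12+u)/\gamma_t(\frac12)$. This is needed to separate the variables $t_j$ and $m$ with only $T^{\varepsilon}$ loss, and in particular to justify replacing $|\Gamma(\frac14+\frac{it_j}2)|^4$ by $\cosh(\pi t_j)^{-1}T^{-1}$ uniformly across the dyadic window. If this step causes difficulty, the first alternative is to use a cruder contour at $\Re u=\varepsilon$ with $G(u)$ chosen to decay faster than any exponential, which makes the uniformity trivial.
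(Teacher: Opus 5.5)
Your proposal is correct and follows essentially the same route as the paper. The shared steps are the approximate functional equation for $\Lambda_j(\frac12)$ with weight $e^{u^2}$, Stirling on $\Re u=\varepsilon$ to extract $e^{-\pi|t_j|}|t_j|^{-1}\ll T^{-1}/\cosh(\pi t_j)$, Cauchy--Schwarz in $\Im u$ and over the dyadic blocks, and the large sieve of Fact~\ref{F:LS}.

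The differences are cosmetic. The paper keeps all dyadic blocks and moves to $\Re u=2$ for $N>T^2$, instead of truncating at $T^{1+\varepsilon}$. It handles $|t_j|\le1$ by rerunning the argument with $T=1$, rather than by your pointwise bound from Lemma~\ref{coeff}. Your route needs $\{j:|t_j|\le1\}$ to be finite, and this comes from the discreteness of the spectrum. It does not come from integrating Fact~\ref{F:weyl}, whose $Ty$ term is not integrable in the cusp.
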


\begin{proof}
\emph{Approximate functional equation.} By Lemma~\ref{mellin}, shifting contours from $\Re u=2$ to $\Re u=-2$ and using $u\mapsto-u$,
\[
\Lambda_j(\tfrac12)=\frac2{2\pi i}\int_{(2)}\Lambda_j(\tfrac12+u)e^{u^2}\frac{\dd u}u=2\sum_{m\ge1}\rho_j(m)m^{-1/2}W_{t_j}(m),
\]
where $W_t(x)=\frac1{2\pi i}\int_{(2)}\gamma_t(\frac12+u)e^{u^2}x^{-u}\frac{\dd u}u$.

\emph{Stirling bound.} For $u=\sigma+iv$ with $\sigma\in[\varepsilon,2]$, Stirling's formula and $|v+t|+|v-t|\ge2|t|$ give
\[
|\gamma_t(\tfrac12+u)|\ll_\varepsilon(1+|t|)^{\sigma-\frac12}(1+|v|)^{2}e^{-\pi|t|/2}
\]
for real $t$. For the finitely many $t\in i(0,\frac12)$ the left-hand side is $\ll_\varepsilon(1+|v|)^2$.

\emph{Dyadic decomposition.} Let $\omega$ be smooth with support in $[\frac12,2]$ and $\sum_{k\ge0}\omega(x/2^k)=1$ for $x\ge1$. For $N=2^k$ and any $\sigma>0$,
\[
\sum_m\rho_j(m)m^{-\frac12}\omega(\tfrac mN)W_{t_j}(m)=\frac1{2\pi i}\int_{(\sigma)}\gamma_{t_j}(\tfrac12+u)e^{u^2}S_N(u)\frac{\dd u}u,\qquad S_N(u)=\sum_m\rho_j(m)\omega(\tfrac mN)m^{-\frac12-u}.
\]
Denote this by $a_N(j)$. Take $\sigma=\varepsilon$ if $N\le T^2$ and $\sigma=2$ if $N>T^2$. By Cauchy--Schwarz in $v$, for $T<|t_j|\le2T$,
\[
|a_N(j)|^2\ll_\varepsilon T^{2\sigma-1}e^{-\pi|t_j|}\int_\R(1+|v|)^{2}e^{-v^2}|S_N(\sigma+iv)|^2\dd v .
\]

\emph{Large sieve.} Since $e^{-\pi|t|}\le2/\cosh(\pi t)$, Fact~\ref{F:LS} (applied to the two blocks $(\frac N2,N]$ and $(N,2N]$) gives
\[
\sum_{T<|t_j|\le2T}|a_N(j)|^2\ll_\varepsilon T^{2\sigma-1}(T^2+N^{1+\varepsilon})N^{-2\sigma}.
\]
This is $\ll T^{1+4\varepsilon}$ for $N\le T^2$, and $\ll T^{3}N^{-3+\varepsilon}$ for $N>T^2$.

\emph{Summation over $N$.} Finally
\[
|\J_j|^2\ll\Big(\sum_N|a_N(j)|\Big)^2\le2\log(4T^2)\sum_{N\le T^2}|a_N(j)|^2+2\Big(\sum_{N>T^2}\Big(\frac{T^2}N\Big)^{1/2}\Big)\sum_{N>T^2}\Big(\frac N{T^2}\Big)^{1/2}|a_N(j)|^2 .
\]
Summing over $j$ gives the first claim. The second follows in the same way with $T=1$.
\end{proof}

\subsection{Periods of Eisenstein series}
For an $S$-invariant function $e$ on $i\R_{>0}$ of the form $e(iy)=ay^{s'}+by^{1-s'}+e^\sharp(y)$, with $s'\ne\frac12$ and $e^\sharp$ decaying exponentially as $y\to\infty$, define the regularised period
\[
\mathcal P(e)=2\int_1^\infty e^\sharp(y)\frac{\dd y}y-\frac{2a}{s'}-\frac{2b}{1-s'} .
\]
Put
\begin{equation}\label{Jdef}
\J(s')=\pi^{s'-\frac12}\frac{\Gamma(\frac{s'}2)\Gamma(\frac{1-s'}2)\zeta(s')\zeta(1-s')}{\Gamma(s')\zeta(2s')} .
\end{equation}

\begin{lemma}\label{eisperiod}
Let $s'=\frac12-it$, $t\ne0$. Then $\mathcal P(E(\cdot,s'))=\J(s')$, $\mathcal P(E_2(\cdot,s'))=(2^{s'}+2^{1-s'}-2)\J(s')$, and
\[
|\J(\tfrac12-it)|\ll\frac{|\zeta(\frac12+it)|^2\log(2+|t|)}{(1+|t|)^{1/2}} .
\]
\end{lemma}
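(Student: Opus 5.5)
The plan is to compute $\mathcal P(E(\cdot,s'))$ as a Mellin transform, using the Fourier expansion of Fact~\ref{F:E}, and then to treat $E_2$ through the Hecke operator $T_2$.

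\emph{The period of $E$.} On $i\R_{>0}$ the non-constant terms pair up as $m,-m$, so
\[
E^\sharp(y)=\frac{4\pi^{s'}\sqrt y}{\Gamma(s')\zeta(2s')}\sum_{m\ge1}m^{s'-\frac12}\sigma_{1-2s'}(m)K_{s'-\frac12}(2\pi my).
\]
For $\Re u$ large, the Mellin transform $\int_0^\infty E^\sharp(y)y^{u}\frac{\dd y}y$ is evaluated termwise with the Bessel integral already used in Lemma~\ref{mellin}. The Ramanujan identity $\sum_m\sigma_{1-2s'}(m)m^{-w}=\zeta(w)\zeta(w-1+2s')$ then gives the closed form
\[
\frac{\pi^{s'-u-\frac12}}{\Gamma(s')\zeta(2s')}\,\Gamma\Big(\frac{u+s'}2\Big)\Gamma\Big(\frac{u+1-s'}2\Big)\zeta(u+s')\zeta(u+1-s').
\]
On the other hand, $S$-invariance gives $E(i/y)=E(iy)$. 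Splitting $\int_0^\infty$ at $y=1$ and substituting $y\mapsto1/y$ on $(0,1)$ expresses the same Mellin transform, for $\Re u$ large, as
\[
\int_1^\infty E^\sharp(y)\,(y^u+y^{-u})\frac{\dd y}y-\frac{a}{u+s'}-\frac{b}{u+1-s'}-\frac{a}{u-s'}-\frac{b}{u-1+s'},
\]
the rational terms coming from integrating $ay^{s'}+by^{1-s'}$ over $(0,1)$ in both halves. Both expressions continue meromorphically, and the right-hand side is regular at $u=0$ because $s'\ne\frac12$. Setting $u=0$ therefore gives exactly $\mathcal P(E)$, and the left-hand side becomes $\J(s')$. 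The remaining point is that the poles of the $\Gamma\zeta\zeta$ product at $u=\pm s'$ and $u=\pm(1-s')$ match the rational terms. This is routine, given $a=1$, $b=\varphi(s')$ and the functional equation $\varphi(s')=\pi^{2s'-1}\frac{\Gamma(1-s')\zeta(2-2s')}{\Gamma(s')\zeta(2s')}$.

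\emph{The period of $E_2$.} I would not compute this directly. Instead, apply the same method to $T_2E=E(2z)+E(\frac z2)+E(\frac{z+1}2)$, which by Fact~\ref{F:E} equals $D_2(s')E=(2^{s'}+2^{1-s'})E$. The regularised period $\mathcal P$ is invariant under $y\mapsto cy$: viewed as the value at $u=0$ of the continued Mellin transform, rescaling only multiplies that transform by $c^{-u}$. Hence $\mathcal P(E(2\cdot))=\mathcal P(E(\cdot/2))=\mathcal P(E)$. Although $E(2z)$ and $E(z/2)$ are not individually $S$-invariant, their sum is, and the Mellin characterisation of $\mathcal P$ applies to it. Since $\mathcal P$ is linear, this gives $\mathcal P(E_2)=(2^{s'}+2^{1-s'}-2)\J(s')$. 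This step is the main obstacle: one must state $\mathcal P$ intrinsically as the continued Mellin value at $0$ for functions whose constant terms may involve several powers, and check that $E_2=E(\frac{z+1}2)$ has expansion on $i\R$ of the required shape, with constant term $2^{-s'}y^{s'}+\varphi2^{s'-1}y^{1-s'}$ as in Lemma~\ref{lem:Edata}(ii).

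\emph{The bound.} Take $s'=\frac12-it$. Then $|\zeta(s')\zeta(1-s')|=|\zeta(\frac12+it)|^2$, and Fact~\ref{F:zeta} gives $|\zeta(2s')|^{-1}=|\zeta(1-2it)|^{-1}\ll\log(2+|t|)$ for $|t|\ge1$. By Stirling,
\[
\frac{|\Gamma(\frac14-\frac{it}2)\Gamma(\frac14+\frac{it}2)|}{|\Gamma(\frac12-it)|}\asymp\frac{(1+|t|)^{-1/2}e^{-\pi|t|/2}}{e^{-\pi|t|/2}}=(1+|t|)^{-1/2}.
\]
For $0<|t|\le1$, the simple pole of $1/\zeta(1-2it)$'s reciprocal, i.e.\ the factor $\zeta(1-2it)\sim-1/(2it)$, makes $1/\zeta(2s')$ vanish at $t=0$. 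The quotient is therefore bounded near $0$, which is consistent with the stated bound.
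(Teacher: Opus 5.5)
Your plan works, but the splitting identity in your first step has a sign error, and as written it makes the evaluation at $u=0$ fail. For $\Re u>\frac12$ we have $\int_1^\infty(ay^{s'}+by^{1-s'})y^{-u}\frac{\dd y}y=\frac a{u-s'}+\frac b{u-1+s'}$, so the correct identity is
\[
\int_0^\infty E^\sharp(y)y^u\frac{\dd y}y=\int_1^\infty E^\sharp(y)(y^u+y^{-u})\frac{\dd y}y+\frac a{u-s'}+\frac b{u-1+s'}-\frac a{u+s'}-\frac b{u+1-s'} .
\]
With your four minus signs the fractions cancel in pairs at $u=0$. You would then get $2\int_1^\infty E^\sharp(y)\frac{\dd y}y=\J(s')$, which is not $\mathcal P(E)$.

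No pole matching is needed, because the two meromorphic continuations agree by uniqueness. The residues serve only as a check, and that check would have exposed the slip: your closed form has residue $+1=a$ at $u=s'$. Also, regularity at $u=0$ needs only $s'\ne0,1$, not $s'\ne\frac12$.

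Once the signs are fixed, your computation for $E$ is the paper's, and so is your proof of the bound. For $|t|\le1$ it suffices that $|\zeta(1-2it)|^{-1}$ is bounded there; no lower bound for $|\zeta(\frac12+it)|$ is needed.

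For $E_2$ your route is genuinely different, and it is correct. The paper computes the Mellin transform of $E(\frac12+\frac{iy}2,s')$ directly from the Fourier expansion. This brings in the twist $(-1)^m$, and the twisted series is evaluated by changing the Euler factor at $2$. You use $T_2E=D_2(s')E$ instead.

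The ``main obstacle'' you flag disappears if you work with the Mellin transforms $M^\sharp$ of the non-constant parts rather than with $\mathcal P$ itself. The constant terms of $E(2iy)+E(\frac{iy}2)+E_2(iy)$ and of $D_2(s')E(iy)$ agree. Hence $E_2^\sharp(y)=D_2(s')E^\sharp(y)-E^\sharp(2y)-E^\sharp(\frac y2)$, and
\[
M^\sharp_{E_2}(u)=\big(2^{s'}+2^{1-s'}-2^u-2^{-u}\big)M^\sharp_E(u),
\]
first for $\Re u$ large and then identically. The splitting identity is then needed only for $E$ and $E_2$. Both are $S$-invariant with constant terms of the two-power shape (Lemma~\ref{lem:Edata}(ii)), and setting $u=0$ gives the claim. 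So you need neither an extension of $\mathcal P$ nor the $S$-invariance of $E(2z)+E(\frac z2)$.

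This is the same function the paper obtains, since $2^w\big(1-2/F_2(1-s'+w)\big)=2^{s'}+2^{1-s'}-2^w-2^{-w}$. Your route explains the factor as $D_2(s')-2$: the Hecke eigenvalue minus two rescaled copies of $E$, each contributing $M^\sharp_E(0)=\J(s')$. It also avoids twisted Dirichlet series. The paper's route is a direct computation that stays within the Fourier expansion.
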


\begin{proof}
\emph{$\mathcal P(e)$ as a Mellin transform.} Let $M^\sharp(w)=\int_0^\infty e^\sharp(y)y^w\frac{\dd y}y$ for $\Re w$ large. Split at $y=1$, and in $\int_0^1$ substitute $y\mapsto1/y$, using $e^\sharp(1/y)=e^\sharp(y)+c(y)-c(1/y)$ with $c(y)=ay^{s'}+by^{1-s'}$. This gives
\[
M^\sharp(w)=\int_1^\infty e^\sharp(y)(y^w+y^{-w})\frac{\dd y}y+\frac a{w-s'}+\frac b{w-1+s'}-\frac a{w+s'}-\frac b{w+1-s'}.
\]
The right-hand side is meromorphic and regular at $w=0$, where it equals $\mathcal P(e)$.

\emph{The Mellin transform of the non-constant part.} For $x_0\in\{0,\frac12\}$, Fact~\ref{F:E} gives the Mellin transform of the non-constant part of $E(x_0+iy,s')$ as
\[
4\pi^{s'}(2\pi)^{-w-\frac12}2^{w-\frac32}\frac{\Gamma(\frac{w+s'}2)\Gamma(\frac{w+1-s'}2)}{\Gamma(s')\zeta(2s')}\sum_{m\ge1}\epsilon_{x_0}(m)\sigma_{1-2s'}(m)m^{s'-1-w},
\]
where $\epsilon_0=1$ and $\epsilon_{1/2}(m)=(-1)^m$.

\emph{The twisted Dirichlet series.} For multiplicative $\kappa$ with $F_2(z)=\sum_k\kappa(2^k)2^{-kz}$,
\[
\sum_{m\ge1}(-1)^m\kappa(m)m^{-z}=\Big(1-\frac2{F_2(z)}\Big)\sum_{m\ge1}\kappa(m)m^{-z},
\]
because the odd $m$ contribute $F_2(z)^{-1}\sum \kappa(m)m^{-z}$. For $\kappa=\sigma_a$, $F_2(z)^{-1}=(1-2^{-z})(1-2^{a-z})$. With $a=1-2s'$, $z=1-s'+w$ and $w=0$, the factor is
\[
1-2(1-2^{s'-1})(1-2^{-s'})=2^{s'}+2^{1-s'}-2 .
\]

\emph{Conclusion.} Since $E_2(iy,s')=E(\frac12+\frac{iy}2,s')$, its Mellin transform at $w$ carries an extra factor $2^w$, which equals $1$ at $w=0$. Both functions are $S$-invariant (Lemma~\ref{lem:Etheta}(iv)). The value at $w=0$ for $x_0=0$ is $\J(s')$. The bound follows from Stirling's formula and Fact~\ref{F:zeta}.
\end{proof}

Accordingly, with $p_\Gamma(s')=1$ and $p_\vartheta(s')=2^{s'}+2^{1-s'}-1$ we have $\mathcal P(e_G(\cdot,s'))=p_G(s')\J(s')$, and we put
\begin{equation}\label{qG}
q_G(t)=c_G(\tfrac12+it)\,p_G(\tfrac12-it):\qquad q_\Gamma=1,\qquad q_\vartheta(t)=\frac{2\sqrt2\cos(t\log2)-1}{3+2\sqrt2\cos(t\log2)} .
\end{equation}
In particular $|q_G(t)|\le(2\sqrt2+1)/(\sqrt2-1)^2<23$.

\subsection{The spectral expansion and proof of Theorem~\ref{thm:sqexact}}
Fix $d\mid n$ (so $d$ is coprime to $w_G$) and put $Y=n^{10}$. By Lemma~\ref{vanish}, $\mathcal G_G(d)=\int_{1/Y}^Y(T^G_dF_G)(iy)\frac{\dd y}y$.

\emph{The expansion.} Apply $T^G_d$ in $w$ to Fact~\ref{F:pretrace}(b) with $z=i$. This is a finite sum of translates, and the series converges uniformly on compacta. Lemma~\ref{lem:Edata}(i), Fact~\ref{F:hecke}, Lemma~\ref{hecke}(ii), Lemma~\ref{lem:Etheta}(iv) and Lemma~\ref{kernel} give
\[
(T^G_dF_G)(w)=\sum_jh(t_j)\sqrt d\lambda_j(d)u_j(i)u_j(w)+\frac{\sigma(d)H(0)}{\vol_G}+\frac1{4\pi}\int_\R h(t)c_G(s)E(i,s)D_d(s')e_G(w,s')\dd t,
\]
with $s=\frac12+it$, $s'=1-s$, and $D_d(s')=D_d(s)$. Integrating over $w=iy$, $Y^{-1}\le y\le Y$, gives $\mathcal G_G(d)=\mathcal G_{\rm cusp}+\mathcal G_{\rm const}+\mathcal G_{\rm Eis}$.

\emph{Cuspidal part.} Only even $u_j$ contribute, and $\int_{1/Y}^Y=\J_j-2\int_Y^\infty$ by symmetry.
\begin{itemize}
\item For $|t_j|\le Y/10$ the tail is $\ll e^{-Y}$ by Lemma~\ref{coeff}.
\item For $|t_j|>Y/10$ we use $|h(t_j)|\ll_A\delta(\delta|t_j|)^{-A}$, Cauchy--Schwarz, Fact~\ref{F:weyl} in the form
\[
\sum_{T<|t_j|\le2T}\Big|\int_{1/Y}^Yu_j(iy)\frac{\dd y}y\Big|^2\ll(\log Y)^2(T^2+TY),
\]
and Proposition~\ref{periodmean}.
\end{itemize}
Since $\delta\ge n^{-1}$, this gives
\[
\mathcal G_{\rm cusp}=\sum_{j:\ u_j\ \mathrm{even}}h(t_j)\sqrt d\lambda_j(d)u_j(i)\J_j+O(n^{-10}).
\]

\emph{Constant part.} $\mathcal G_{\rm const}=\vol_G^{-1}\sigma(d)H(0)\cdot2\log Y$.

\emph{Eisenstein part: decomposition.} By $S$-invariance and Lemma~\ref{lem:Edata}(ii),
\[
\int_{1/Y}^Ye_G(iy,s')\frac{\dd y}y=p_G(s')\J(s')+\frac{2a_G(s')Y^{s'}}{s'}+\frac{2b_G(s')Y^{1-s'}}{1-s'}-2\int_Y^\infty e^\sharp_G(y,s')\frac{\dd y}y .
\]

\emph{Eisenstein part: the tail.} For $|t|\le Y/10$ the tail is $O(e^{-Y})$, by Fact~\ref{F:E}, $|\Gamma(\frac12-it)|^{-1}\le e^{\pi|t|/2}$ and the Bessel bound of Lemma~\ref{coeff}. For $|t|>Y/10$ the whole integral is bounded polynomially in $t$ and $Y$ (Fact~\ref{F:weyl} for $E$ at $iy$ and $\frac{1+iy}2$), and $h$ is negligible there. Hence the tail contributes $O(n^{-10})$.

\emph{Eisenstein part: the $Y$-terms.} By Lemma~\ref{lem:Edata}(iii), they contribute
\[
\frac1{4\pi}\int_\R h(t)D_d(s)\Big[\frac{2E_G(i,s)Y^{s'}}{w_G^{s'}s'}+\frac{2E_G(i,s')Y^{1-s'}}{w_G^{s}(1-s')}\Big]\dd t=\frac1{\pi i}\int_{(1/2)}H(s)D_d(s)E_G(i,1-s)\Big(\frac Y{w_G}\Big)^s\frac{\dd s}s,
\]
where the two terms are equal after $t\mapsto-t$, since $h$ and $D_d$ are even in $t$. Shift the contour to $\Re s=-1$.
\begin{itemize}
\item By Lemma~\ref{hbounds}(i),(iii), $H$ decays rapidly in the strip.
\item $D_d(s)\ll\tau(d)d^2$.
\item By Lemma~\ref{lem:Edata}(iv), the only pole is at $s=0$, and the integral on $\Re s=-1$ is $\ll d^3Y^{-1}$.
\end{itemize}
Write $H(s)D_d(s)=a_0+a_1s+\dots$ with $a_0=H(0)\sigma(d)$ and $a_1=H'(0)\sigma(d)+H(0)D_d'(0)$. By Lemma~\ref{lem:Edata}(iv), twice the residue at $s=0$ is
\[
2a_0E_G^*-\frac2{\vol_G}a_0\log\frac Y{w_G}-\frac2{\vol_G}a_1 .
\]
Its $\log Y$ term cancels $\mathcal G_{\rm const}$.

\emph{Result.} Altogether,
\begin{equation}\label{Gd}
\begin{split}
\mathcal G_G(d)={}&2\sigma(d)H(0)\Big(E^*_G+\frac{\log w_G}{\vol_G}\Big)-\frac2{\vol_G}\big(\sigma(d)H'(0)+H(0)D_d'(0)\big)\\
&+\sum_{j:\ u_j\ \mathrm{even}}h(t_j)\sqrt d\lambda_j(d)u_j(i)\J_j\\
&+\frac1{4\pi}\int_\R h(t)D_d(\tfrac12+it)E(i,\tfrac12+it)q_G(t)\J(\tfrac12-it)\dd t+O(d^3n^{-10}).
\end{split}
\end{equation}

\begin{proof}[Proof of Theorem~\ref{thm:sqexact}]
Insert \eqref{Gd} into Proposition~\ref{reduction}.

\emph{Divisor sums.} We have $\sum_{d\mid n}\mu(n/d)\sigma(d)=n$. Since $D_d'(0)=(\log*\mathrm{id})(d)-((\mathrm{id}\cdot\log)*1)(d)$,
\[
\sum_{d\mid n}\mu\Big(\frac nd\Big)D'_d(0)=(\Lambda*\mathrm{id})(n)-n\log n=n(\ell(n)-\log n).
\]

\emph{Main term.} By Lemma~\ref{hbounds}(ii) and $x=n\sinh r$, $nH(0)=\Psi_0$ and $-nH'(0)=\Psi_1-(\log n)\Psi_0$. The non-spectral terms therefore give
\[
\frac2{\vol_G}\Psi_1+\Big(2E_G^*+\frac2{\vol_G}(\log w_G-\ell(n))\Big)\Psi_0 .
\]
For $\Gamma$ this is $\frac6\pi\Psi_1+(2E^*(i)-\frac6\pi\ell(n))\Psi_0=\Mm_\Gamma(n)$. For $\Gt$, by Lemma~\ref{lem:Edata}(iv), $2E^*_\vartheta+\frac2\pi\log2=\frac2\pi(\frac\pi3E^*(i)+\frac{\log2}3)$, which gives $\Mm_\vartheta(n)$.

\emph{Spectral terms and error.} The spectral terms combine as stated, since $\sqrt d\sum_{ab=d}(a/b)^{it}=D_d(\frac12+it)$. The error is $\sum_{d\mid n}O(d^3n^{-10})=O(n^{-5})$.
\end{proof}

\subsection{Proof of Theorem~\ref{thm:sqasymp}}
\emph{Cuspidal part.} By Fact~\ref{F:hecke}, $|A_j(n)|\le\sum_{d\mid n}\sqrt d\tau(d)d^\theta\le\tau(n)^2n^{1/2+\theta}$. By Cauchy--Schwarz, Fact~\ref{F:weyl} and Proposition~\ref{periodmean},
\[
\sum_{T<|t_j|\le2T}|u_j(i)\J_j|\ll T\cdot T^{\frac12+\varepsilon}\qquad(T\ge1),
\]
and the block $|t_j|\le1$ is $O(1)$. With \eqref{hbound}, dyadic summation bounds the cuspidal part by
\[
\ll\tau(n)^2n^{\frac12+\theta}\delta\sum_{T=2^k}T^{\frac32+\varepsilon}\min(1,(\delta T)^{-2})\ll n^{\frac12+\theta+\varepsilon}\delta^{-\frac12-\varepsilon}\ll n^{1+\theta+2\varepsilon}M^{-1/2}.
\]

\emph{Eisenstein part.} $|A(n,t)|\le\tau(n)^2n^{1/2}$ and $|q_G|<23$. By Cauchy--Schwarz, Fact~\ref{F:weyl}, Lemma~\ref{eisperiod} and Fact~\ref{F:zeta},
\[
\int_{T\le|t|\le2T}|E(i,\tfrac12+it)\J(\tfrac12-it)|\dd t\ll T\Big(\int_T^{2T}\frac{|\zeta(\frac12+it)|^4\log^2t}t\dd t\Big)^{1/2}\ll T(\log T)^3,
\]
and the range $|t|\le1$ is bounded. Dyadic summation against $\delta\min(1,(\delta T)^{-2})$ gives $\ll n^{1/2+\varepsilon}$.

\emph{Conclusion.} Since $M\le n$, we have $n^{1/2}\le n^{1+\theta}M^{-1/2}$, and Theorem~\ref{thm:sqasymp} follows from Theorem~\ref{thm:sqexact}.\qed

\subsection{Proof of Theorem~\ref{thm:sq}}
Let $\MM_e(n;M)$ be the main term in Theorem~\ref{thm:sq}, $\MM=\MM_0+\MM_1$, and $\Mm_G(n;M)=\Mm_G(n)$ as in Theorem~\ref{thm:sqexact}. By Proposition~\ref{prop:par}, for odd $n$ (then $n^2\equiv1\bmod 8$),
\[
\Rr_0(n^2)=2N_\vartheta(n^2),\qquad \Rr_1(n^2)=N(n^2)-N_\vartheta(n^2),
\]
and for even $n$,
\[
\Rr_0(n^2;M)=\Rr\big((\tfrac n2)^2;\tfrac M2\big),\qquad \Rr_1(n^2;M)=2N(n^2;M)-2\Rr\big((\tfrac n2)^2;\tfrac M2\big).
\]

\begin{lemma}\label{lem:mainid}
The main terms satisfy the same identities: for odd $n$, $\MM_0=2\Mm_\vartheta$ and $\MM_1=\Mm_\Gamma-\Mm_\vartheta$; for even $n$,
\[
\MM_0(n;M)=\MM(\tfrac n2;\tfrac M2),\qquad \MM_1(n;M)=2\Mm_\Gamma(n;M)-2\MM(\tfrac n2;\tfrac M2).
\]
\end{lemma}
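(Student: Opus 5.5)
The identities are purely algebraic, so the plan is to write everything in terms of $\Psi_0$, $\Psi_1$, $\ell$, $\lo$ and $\beta_e$, and compare coefficients. Write $\MM_e(n;M)=\frac4\pi[\Psi_1+(c_1-\lo(n)-\beta_e(n)\log2)\Psi_0]$, and set $\beta(m)=\beta_0(m)+\beta_1(m)$. Then
\[
\MM(m;M)=\frac8\pi\big[\Psi_1+(c_1-\lo(m))\Psi_0\big]-\frac4\pi\beta(m)\log2\,\Psi_0 ,
\]
with $\beta(m)=\frac12$ for odd $m$ and $\beta(m)=2-\frac32 2^{-v_2(m)}$ for even $m$. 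Two arithmetic facts will be used repeatedly:
\begin{itemize}
\item $\lo(n/2)=\lo(n)$, since $n$ and $n/2$ have the same odd divisors;
\item $\ell(n)=\lo(n)+\sum_{k=1}^{v}\frac{\log2}{2^k}=\lo(n)+(1-2^{-v})\log2$, in particular $\ell=\lo$ for odd $n$.
\end{itemize}

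\emph{Odd $n$.} Here $\ell(n)=\lo(n)$, so $2\Mm_\vartheta=\frac4\pi[\Psi_1+(c_1-\ell)\Psi_0]=\MM_0$, because $\beta_0=0$. Next,
\[
\Mm_\Gamma-\Mm_\vartheta=\frac4\pi\big[\Psi_1+(c_1-\ell)\Psi_0\big]-\frac2\pi\log2\,\Psi_0 .
\]
This is $\MM_1$, because $\beta_1=\frac12$.

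\emph{Even $n$, the scaling step.} The key input is how the main-term data transform under $(n,M)\mapsto(\frac n2,\frac M2)$. We have $\Psi_0(M/2)=\frac12\Psi_0(M)$. For $\Psi_1$, substitute $x=y/2$:
\[
\Psi_1(\tfrac n2;\tfrac M2)=\int\Phi\Big(\frac{2x}M\Big)\log\sqrt{\tfrac{n^2}4+x^2}\,\dd x=\tfrac12\big(\Psi_1(n;M)-\log2\,\Psi_0(M)\big).
\]
Inserting these into the formula for $\MM$ gives
\[
\MM(\tfrac n2;\tfrac M2)=\frac4\pi\Big[\Psi_1+\big(c_1-\lo(n)-(1+\tfrac12\beta(\tfrac n2))\log2\big)\Psi_0\Big].
\]
So the first identity reduces to $\beta_0(n)=1+\frac12\beta(n/2)$. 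We check this in two cases:
\begin{itemize}
\item for $v=1$: $1+\frac14=\frac54=2-\frac34$;
\item for $v\ge2$: $1+1-\frac32 2^{-v}$, which is $\beta_0(n)$.
\end{itemize}

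\emph{Even $n$, the second identity.} Using the first identity, it suffices to show that $2\Mm_\Gamma-2\MM_0(n)=\MM_1(n)$, whose $\Psi_0$-coefficient has no $\log2$ term since $\beta_1=0$. Expanding, the $\Psi_1$-coefficients are $\frac{12}\pi-\frac8\pi=\frac4\pi$ and the $c_1$-coefficients match. It remains to check the rest of the $\Psi_0$-coefficient, divided by $\frac4\pi$:
\[
-3\ell-\log2+2\lo+2\beta_0\log2 .
\]
With $\ell=\lo+(1-2^{-v})\log2$ and $\beta_0=2-\frac32 2^{-v}$, this equals
\[
-\lo+\big(-3+3\cdot2^{-v}-1+4-3\cdot2^{-v}\big)\log2=-\lo .
\]
This is exactly the required coefficient. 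The only step needing care is the scaling of $\Psi_1$ and the bookkeeping of $\ell$ versus $\lo$; everything else is coefficient comparison.
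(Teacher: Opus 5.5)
Your proposal is correct and follows essentially the same route as the paper: the same auxiliary identities ($\lo(n/2)=\lo(n)$, $\ell(n)=\lo(n)+(1-2^{-v})\log2$, and the scaling of $\Psi_0,\Psi_1$ under $(n,M)\mapsto(\frac n2,\frac M2)$), followed by coefficient comparison. Your reduction of the first even-$n$ identity to $\beta_0(n)=1+\frac12\beta(\frac n2)$ is a repackaging of the paper's check that $\beta_0(\frac n2)+\beta_1(\frac n2)=2-3\cdot2^{-v}$. Your final $\Psi_0$-coefficient computation is the paper's vanishing bracket $12(\frac43-2^{-v})-8(2-\frac32 2^{-v})=0$, written in a different order.
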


\begin{proof}
\emph{Odd $n$.} Here $\ell(n)=\lo(n)$, so $2\Mm_\vartheta=\frac4\pi[\Psi_1+(c_1-\lo)\Psi_0]=\MM_0$. Moreover
\[
\Mm_\Gamma-\Mm_\vartheta=\frac4\pi\Psi_1+\Big(\frac6\pi\Big(c_1-\frac{\log2}3-\lo\Big)-\frac2\pi(c_1-\lo)\Big)\Psi_0=\MM_1 .
\]

\emph{Even $n$, $v=v_2(n)\ge1$: auxiliary identities.} We have $\lo(\frac n2)=\lo(n)$ and $\ell(n)=\lo(n)+(1-2^{-v})\log 2$. Substituting $x=x'/2$ gives
\[
\Psi_1(\tfrac n2;\tfrac M2)=\tfrac12\big(\Psi_1(n;M)-(\log2)\Psi_0(M)\big),\qquad \Psi_0(\tfrac M2)=\tfrac12\Psi_0(M).
\]
Finally $\beta_0(\frac n2)+\beta_1(\frac n2)=2-3\cdot2^{-v}$, both for $v=1$ (when this equals $\frac12$) and for $v\ge2$.

\emph{Even $n$: the identities.} With the abbreviations $\Psi_i=\Psi_i(n;M)$ and $\lo=\lo(n)$,
\[
\MM(\tfrac n2;\tfrac M2)=\frac4\pi\Big[\Psi_1-(\log2)\Psi_0+\big(c_1-\lo-(1-\tfrac32 2^{-v})\log2\big)\Psi_0\Big]=\MM_0(n;M).
\]
Moreover
\[
2\Mm_\Gamma(n;M)-2\MM_0(n;M)=\frac4\pi\big[\Psi_1+(c_1-\lo)\Psi_0\big]-\frac{\log2}\pi\Big[12\Big(\frac43-2^{-v}\Big)-8\Big(2-\frac32 2^{-v}\Big)\Big]\Psi_0 .
\]
The last bracket vanishes, so this equals $\MM_1(n;M)$.

\emph{The formula for $\Rr(n^2)$.} Writing $\MM_0+\MM_1$ in terms of $\ell(n)$, using $\beta_0+\beta_1=2-\frac32 2^{-v}$ for all $v\ge0$, gives the second formula of Theorem~\ref{thm:sq}.
\end{proof}

\begin{proof}[Proof of Theorem~\ref{thm:sq}]
Write $\Ee_e(n;M)=\Rr_e(n^2;M)-\MM_e(n;M)$, $\Ee=\Ee_0+\Ee_1$ and $\mathfrak e_G=N_G(n^2)-\Mm_G(n)$.

\emph{Error identities.} By Proposition~\ref{prop:par} and Lemma~\ref{lem:mainid}:
\begin{itemize}
\item for odd $n$: $\Ee_0=2\mathfrak e_\vartheta$, $\Ee_1=\mathfrak e_\Gamma-\mathfrak e_\vartheta$, and $\Ee=\mathfrak e_\Gamma+\mathfrak e_\vartheta$;
\item for even $n$: $\Ee_0(n;M)=\Ee(\frac n2;\frac M2)$, $\Ee_1(n;M)=2\mathfrak e_\Gamma(n;M)-2\Ee(\frac n2;\frac M2)$, and $\Ee(n;M)=2\mathfrak e_\Gamma(n;M)-\Ee(\frac n2;\frac M2)$.
\end{itemize}

\emph{Small $M$.} For $0<M'<2$ and any $n'\ge1$, only $|x_3|\le3$ contribute. By the divisor bound $\Rr_e(n'^2;M')\ll n'^{\varepsilon}$, and clearly $\MM_e(n';M')\ll\log(n'+2)$. Hence $\Ee_e(n';M')\ll n'^{\varepsilon}$.

\emph{Induction.} Let $C$ be such that $|\mathfrak e_G(n;M)|\le Cn^{1+\theta+\varepsilon}M^{-1/2}$ in the range of Theorem~\ref{thm:sqasymp}. We show by induction on $v_2(n)$ that $|\Ee(n;M)|\le Kn^{1+\theta+\varepsilon}M^{-1/2}$ for $1\le M\le n$, with $K=\max(4C/(1-2^{-1/2}),K_0)$, where $K_0$ covers the small-$M$ case (note $n^{\varepsilon}\le n^{1+\theta+\varepsilon}M^{-1/2}$).
\begin{itemize}
\item For odd $n$, $|\Ee|\le2Cn^{1+\theta+\varepsilon}M^{-1/2}$.
\item For even $n$ and $M<2$, use the small-$M$ bound.
\item For even $n$ and $M\ge2$, we have $1\le\frac M2\le\frac n2$, and
\[
|\Ee(n;M)|\le2Cn^{1+\theta+\varepsilon}M^{-1/2}+K(n/2)^{1+\theta+\varepsilon}(M/2)^{-1/2}\le Kn^{1+\theta+\varepsilon}M^{-1/2}.
\]
\end{itemize}

\emph{Conclusion.} The bounds for $\Ee_0,\Ee_1$ now follow from the error identities, using the small-$M$ bound when $M/2<1$.
\end{proof}

\section{Non-square discriminants}\label{sec:nonsquare}
In this section $G=\Gamma$, $\{u_j\}$ is the basis of Fact~\ref{F:hecke} for $\Gamma$, and $\Delta>0$ denotes a non-square discriminant.

\subsection{Closed geodesics and the exact formula}\label{sec:closed}
For $\fq=[A,B,C]\in\Qf(\Delta)$ we have $A\neq0$, and $\fq(x,1)$ has two real irrational roots; $\ell_\fq$ is the geodesic joining them (Lemma~\ref{dist}).

\begin{fact}[Gauss, Dirichlet; see {\cite{Za}}]\label{F:gauss}
\begin{enumerate}[label=(\roman*)]
\item The stabiliser $\Gamma_\fq=\{\gamma\in\Gamma:\fq\circ\gamma=\fq\}$ is infinite cyclic and acts on $\ell_\fq$ by translations. Hence $C_\fq=\Gamma_\fq\backslash\ell_\fq$ is a closed geodesic.
\item $\Gamma\backslash\Qf(\Delta)$ is finite.
\item For primitive $\fq$ of discriminant $\Delta$, $\ell(C_\fq)=2\log\varepsilon^+_\Delta$, where $\varepsilon_\Delta^+=\frac12(t+u\sqrt\Delta)$ and $(t,u)$ is the least positive solution of $t^2-\Delta u^2=4$.
\item For fundamental $D_0>1$, the number of classes of primitive forms is the narrow class number $h^+(D_0)$, and
\[
h^+(D_0)\log\varepsilon^+_{D_0}=2h(D_0)\log\varepsilon_{D_0}=\sqrt{D_0}L(1,\chi_{D_0})
\]
(Dirichlet).
\end{enumerate}
\end{fact}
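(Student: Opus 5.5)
Since these are classical results that the paper quotes, the plan is to recall the standard arguments, in the order (i), (iii), (ii), (iv).

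\emph{Step (i): the stabiliser.} First reduce to primitive forms. Write $\fq=g\fq'$ with $\fq'=[A,B,C]$ primitive of discriminant $\Delta'=\Delta/g^2$; then $\Gamma_\fq=\Gamma_{\fq'}$. A direct computation shows that the elements of $\mathrm{SL}_2(\Z)$ fixing $\fq'$ are exactly
\[
\begin{pmatrix}(t-Bu)/2&-Cu\\ Au&(t+Bu)/2\end{pmatrix},\qquad t^2-\Delta'u^2=4 .
\]
Since $\Delta'$ is not a square, the solutions $(t,u)$ form, under multiplication of $\frac12(t+u\sqrt{\Delta'})$, the group $\{\pm1\}\times\Z$. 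Modulo $\pm1$ the stabiliser is therefore infinite cyclic. Each non-trivial element has trace $|t|>2$, so it is hyperbolic. Its fixed points are the roots of $\fq(x,1)$, so it preserves $\ell_\fq$ and acts on it by translation. This gives the closed geodesic $C_\fq$.

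\emph{Step (iii): the length.} For a primitive form the generator has $|t|/2=\cosh(\ell/2)$, where $\ell$ is its translation length. Hence $\ell=2\operatorname{arccosh}(t/2)=2\log\varepsilon^+_\Delta$.

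\emph{Step (ii): finiteness.} Use reduction. In each class, apply $T^k$ and $S$ to reach a form with $|B|\le|A|\le|C|$. Then $AC<0$, since otherwise $\Delta=B^2-4AC\le A^2-4A^2<0$. Hence $\Delta=B^2+4|AC|\ge4A^2$, so $|A|\le\sqrt\Delta/2$, $|B|\le|A|$, and $C$ is determined by $A$ and $B$. This leaves only finitely many classes.

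\emph{Step (iv): the class number formula.} This is the only substantive step. For fundamental $D_0$, identify primitive classes with narrow ideal classes of $\Q(\sqrt{D_0})$. Express the Dedekind zeta function $\zeta_K(s)=\zeta(s)L(s,\chi_{D_0})$ as a sum over narrow classes of Epstein-type zeta functions. Compute the residue at $s=1$ of each term by counting lattice points in a fundamental domain for the totally positive units acting on the cone $\{N(\alpha)>0\}$; its area involves $\log\varepsilon^+_{D_0}$. This yields $h^+(D_0)\log\varepsilon^+_{D_0}=\sqrt{D_0}L(1,\chi_{D_0})$.

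The equality with $2h(D_0)\log\varepsilon_{D_0}$ is a case distinction on the norm of the fundamental unit. If $N\varepsilon=-1$, then $h^+=h$ and $\varepsilon^+=\varepsilon^2$. Otherwise $h^+=2h$ and $\varepsilon^+=\varepsilon$.

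The main obstacle is the residue computation in (iv). Following the paper's convention, I would cite it, e.g.\ \cite{Za}, rather than reprove it.
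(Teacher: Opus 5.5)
Your proposal is correct and matches the paper, which gives no argument of its own and simply cites these classical facts to \cite{Za}. Your sketches of (i)--(iii) are the standard proofs found there: automorphs via $t^2-\Delta'u^2=4$, translation length $2\operatorname{arccosh}(t/2)$, and reduction to $|B|\le|A|\le|C|$. Deferring the residue computation in (iv) to the literature is also what the paper does. The only slip is in (ii): your inequality rules out $AC>0$ but not $AC=0$, so add that $A=0$ or $C=0$ would make $\Delta=B^2$ a square.
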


\begin{proof}[Proof of Theorem~\ref{thm:nsexact}]
Let $K$ be as in Lemma~\ref{kernel}, and $F(w)=\sum_{\gamma\in\Gamma}K(d(\gamma i,w))$.

\emph{Step 1: unfolding.} By Corollary~\ref{geosum}, $N(D)=\sum_{\fq\in\Qf(D)}k(\mathrm{dist}(i,\ell_\fq))$. The map $\Gamma_\fq\gamma\mapsto \fq\circ\gamma$ is a bijection from $\Gamma_\fq\backslash\Gamma$ onto the class of $\fq$, and $\mathrm{dist}(i,\ell_{\fq\circ\gamma})=\mathrm{dist}(\gamma i,\ell_\fq)$ by Lemma~\ref{dist}(ii). Hence, by Lemma~\ref{kernel},
\[
N(D)=\sum_{[\fq]}\sum_{\gamma\in\Gamma_\fq\backslash\Gamma}\int_{\ell_\fq}K(d(\gamma i,w))\dd s=\sum_{[\fq]}\int_{\Gamma_\fq\backslash\ell_\fq}\sum_{\gamma\in\Gamma}K(d(\gamma i,w))\dd s=\Tr_D(F).
\]
All sums are finite by Fact~\ref{F:gauss}(ii) and the compact support of $K$.

\emph{Step 2: spectral expansion.} Expand $F$ by Fact~\ref{F:pretrace}(b) with $z=i$. The expansion converges uniformly on the compact set $\bigcup_{[\fq]}C_\fq$, so it may be integrated termwise. The constant eigenfunction contributes $\frac3\pi h(\frac i2)\Tr_D(1)$, and $h(\frac i2)=H(0)$. For the Eisenstein term use $\overline{E(w,s)}=E(w,\bar s)$.

\emph{Step 3: odd forms.} The reflection $R$ maps $\ell_\fq$ onto $\ell_{\fq'}$ with $\fq'=[A,-B,C]$, preserves arc length, and permutes $\Gamma\backslash\Qf(D)$. Hence $\Tr_D(u\circ R)=\Tr_D(u)$, which vanishes for odd $u$.
\end{proof}

\subsection{The Hecke relation and the main term}\label{sec:heckerel}
Recall that $T_pF(z)=\sum_{\nu}F(\nu z)$, summed over $\nu\in\Gamma\backslash M_p$ (Lemma~\ref{hecke}). The right cosets $\nu\Gamma$ correspond to the $p+1$ sublattices $\nu\Z^2$ of index $p$ in $\Z^2$.

\begin{lemma}\label{lem:unf}
For $\fQ\in\Qf(p^2\Delta)$ put $\nu_p(\fQ)=\#\{\nu\Gamma\in M_p/\Gamma:\ \fQ\circ\nu\equiv0\bmod p^2\}$ (coefficientwise). Then for continuous $\Gamma$-invariant $F$,
\[
\Tr_\Delta(T_pF)=\sum_{[\fQ]\in\Gamma\backslash\Qf(p^2\Delta)}\nu_p(\fQ)\int_{C_\fQ}F\dd s .
\]
\end{lemma}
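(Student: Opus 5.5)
The plan is to unfold $\Tr_\Delta(T_pF)$ into a sum over pairs (form, sublattice), and then regroup the pairs according to the form of discriminant $p^2\Delta$ that each pair produces.

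First, write $T_pF(z)=\sum_{\nu\in\Gamma\backslash M_p}F(\nu z)$. For each class $[\fq]\in\Gamma\backslash\Qf(\Delta)$ and each $\nu\in M_p$, use Lemma~\ref{dist}(ii) for the $\mathrm{GL}_2^+(\R)$-action, after scaling $\nu$ to determinant $1$. This gives $\nu^{-1}\ell_\fq=\ell_{\fq\circ\nu}$, and $\nu$ acts as an isometry on geodesics. Hence
\[
\int_{C_\fq}F(\nu z)\dd s=\int_{\Gamma_\fq\backslash\ell_\fq}F(\nu z)\dd s ,
\]
and this becomes an integral of $F$ along $\nu\ell_\fq$. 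It is convenient to instead sum over right cosets $\nu\Gamma\in M_p/\Gamma$, via the inverse-adjoint bijection $\nu\mapsto p\nu^{-1}$ between $\Gamma\backslash M_p$ and $M_p/\Gamma$. Then $\Tr_\Delta(T_pF)$ becomes
\[
\sum_{[\fq]}\ \sum_{\nu\in M_p/\Gamma}\int_{\Gamma_\fq\backslash\ell_\fq}F(\nu'^{-1}z)\dd s ,
\]
with $\nu'=p\nu^{-1}$ up to the scaling. Combining the sum over $[\fq]$ with the finite sum over $\nu$, this is
\[
\sum_{\fq\in\Gamma\backslash\Qf(\Delta)}\ \sum_{\nu\in M_p/\Gamma}\int_{\Gamma_\fq\backslash\ell_{\fq}}F\circ(\ldots) .
\]
Then unfold the $\Gamma_\fq$-quotient jointly: $\Gamma$ acts diagonally on pairs $(\fq,\nu\Gamma)$, with $\fq\mapsto\fq\circ\gamma$ and $\nu\Gamma\mapsto\gamma^{-1}\nu\Gamma$, i.e.\ on lattices $\Lambda=\nu\Z^2$ of index $p$.

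Second, the key correspondence is between pairs $(\fq,\Lambda)$, with $\fq\in\Qf(\Delta)$ and $\Lambda\subset\Z^2$ of index $p$, and pairs $(\fQ,\Lambda')$, with $\fQ\in\Qf(p^2\Delta)$ and $\fQ$ divisible by $p^2$ on a lattice $\Lambda'$ of index $p$. Concretely, given $\fQ$ and $\nu$ with $\fQ\circ\nu\equiv0\bmod p^2$, set $\fq=p^{-2}\,\fQ\circ\nu$. It has discriminant $p^{-4}(\det\nu)^2p^2\Delta=\Delta$ and is integral. Conversely, given $\fq$ and $\nu$, set $\fQ=p^2\,\fq\circ\nu^{-1}=\fq\circ(p\nu^{-1})$, which is integral of discriminant $p^2\Delta$. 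Under this, $\ell_\fQ=\nu\,\ell_\fq$ (up to the direction convention), so the integrand $F$ along $\nu\ell_\fq$ is exactly $F$ along $\ell_\fQ$.

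The map is $\Gamma$-equivariant. The stabiliser of the pair $(\fQ,\nu\Gamma)$ equals $\Gamma_\fQ\cap\mathrm{Stab}(\nu\Gamma)$. Therefore, for each $\Gamma$-orbit of pairs, the quotient-integral over $\mathrm{Stab}\backslash\ell$ equals the sum over the $\Gamma_\fQ$-orbit of cosets of $\int_{\Gamma_\fQ\backslash\ell_\fQ}F$. Regrouping by $[\fQ]$, each class then appears with multiplicity equal to the number of cosets $\nu\Gamma$ with $\fQ\circ\nu\equiv0\bmod p^2$, which is $\nu_p(\fQ)$.

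The main obstacle is the bookkeeping of these stabilisers and quotients, namely that the two orbit-stabiliser unfoldings match. I would handle it by working throughout with the $\Gamma$-set $X$ of pairs. I would show that
\[
\sum_{x\in\Gamma\backslash X}\int_{\mathrm{Stab}(x)\backslash\ell_x}F\dd s
\]
computes both sides, projecting $X$ to the first coordinate in one case and to $\fQ$ in the other. Since $\mathrm{Stab}(x)$ has finite index in $\Gamma_\fq$, one has
\[
\int_{\mathrm{Stab}\backslash\ell}=\sum_{\text{cosets}}\int_{\Gamma_\fq\backslash\ell} ,
\]
and the same holds for $\Gamma_\fQ$. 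This avoids delicate orientation issues, which I would fix once using Lemma~\ref{dist}(i): the ordered factorisation of $\fq$ determines the orientation, and $\circ\,\nu$ preserves the sign of the determinant.
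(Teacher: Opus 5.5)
Your route is different from the paper's. The paper first reduces to $F=F_\varphi=\sum_{\gamma\in\Gamma}\varphi\circ\gamma$ with $\varphi\in C_c(\Hh)$; this is allowed because both sides are integrals of $F$ against finite measures on a fixed compact set. It then unfolds completely: with fixed representatives $\nu$ of $M_p/\Gamma$, one gets $\Tr_\Delta(T_pF_\varphi)=\sum_\nu\sum_{\fq\in\Qf(\Delta)}\int_{\nu\ell_\fq}\varphi\dd s$. On $\Hh$ no stabilisers appear. For each fixed $\nu$, the map $\fq\mapsto\fq\circ\mathrm{adj}(\nu)$ is a bijection from $\Qf(\Delta)$ onto $\{\fQ:\fQ\circ\nu\equiv0\bmod p^2\}$ with $\nu\ell_\fq=\ell_\fQ$, and folding back uses only that $\nu_p$ is $\Gamma$-invariant. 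You keep $F$ general and do orbit--stabiliser bookkeeping instead. That avoids the approximation step, but then the matching of stabilisers is the whole content of the lemma, and as written it is not correct.

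\emph{What goes wrong.}
\begin{itemize}
\item The pair $(\fq,\nu\Gamma)$ does not determine $\fQ=\fq\circ\mathrm{adj}(\nu)$. Replacing $\nu$ by $\nu\gamma$ gives $(\fq\circ\gamma^{-1})\circ\mathrm{adj}(\nu)$, which in general is not even $\Gamma$-equivalent to $\fQ$; its $p$-content can differ.
\item Your correspondence $\ell_\fQ=\nu\ell_\fq$ belongs to the integrand $F(\nu z)$, i.e.\ to left cosets $\Gamma\nu$. Your diagonal action $(\fq\circ\gamma,\gamma^{-1}\nu\Gamma)$ belongs to the integrand $F(\nu^{-1}z)$ and the form $\fq\circ\nu$. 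The two are mixed.
\item The stabilisers cannot be equal, so the map cannot be $\Gamma$-equivariant. They lie in $\Gamma_\fq$ and $\Gamma_\fQ$, whose axes $\ell_\fq$ and $\nu\ell_\fq$ are in general different. They are only conjugate.
\end{itemize}

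\emph{A consistent version.} Take $X=\Qf(\Delta)\times(\Gamma\backslash M_p)$ with $\gamma\colon(\fq,\Gamma\mu)\mapsto(\fq\circ\gamma^{-1},\Gamma\mu\gamma^{-1})$. Take $Y=\{(\fQ,\mu\Gamma):\fQ\circ\mu\equiv0\bmod p^2\}$ with $\gamma\colon(\fQ,\mu\Gamma)\mapsto(\fQ\circ\gamma^{-1},\gamma\mu\Gamma)$. Then $(\fq,\Gamma\mu)\mapsto\Gamma\cdot(\fq\circ\mathrm{adj}(\mu),\mu\Gamma)$ is well defined and constant on $\Gamma$-orbits. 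It induces a bijection $\Gamma\backslash X\to\Gamma\backslash Y$, with inverse $(\fQ,\mu\Gamma)\mapsto(p^{-2}\fQ\circ\mu,\Gamma\mu)$. If $S=\{g\in\Gamma_\fq:\Gamma\mu g=\Gamma\mu\}$, a direct check using $\mathrm{adj}(\mu)\,\mu g\mu^{-1}=g\,\mathrm{adj}(\mu)$ gives $\mu S\mu^{-1}=\{\gamma\in\Gamma_\fQ:\gamma\mu\Gamma=\mu\Gamma\}$. Hence $z\mapsto\mu z$ carries $\int_{S\backslash\ell_\fq}F(\mu z)\dd s$ to $\int_{\mu S\mu^{-1}\backslash\ell_\fQ}F\dd s$. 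With this conjugation of stabilisers in place of your equivariance claim, your finite-index summation on both sides proves the lemma.
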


\begin{proof}
\emph{Reduction to $F=F_\varphi$.} Both sides are integrals of $F$ against finite measures supported on a fixed compact subset of $\Gamma\backslash\Hh$. Every continuous $F$ coincides on that set with some $F_\varphi=\sum_{\gamma\in\Gamma}\varphi\circ\gamma$, $\varphi\in C_c(\Hh)$: take $\varphi=F\chi$ for a cut-off $\chi\in C_c(\Hh)$ with $\sum_\gamma\chi\circ\gamma=1$ there. So we may assume $F=F_\varphi$.

\emph{Unfolding.} Fix representatives $\nu$ of $M_p/\Gamma$. Then $T_pF_\varphi(w)=\sum_{\mu\in M_p}\varphi(\mu w)=\sum_\nu\sum_{\gamma\in\Gamma}\varphi(\nu\gamma w)$. Unfolding as in the proof of Theorem~\ref{thm:nsexact},
\[
\Tr_\Delta(T_pF_\varphi)=\sum_\nu\sum_{\fq\in\Qf(\Delta)}\int_{\nu\ell_\fq}\varphi\dd s .
\]

\emph{Identifying the image geodesics.} $\nu\ell_\fq=\ell_\fQ$ with $\fQ=\fq\circ\mathrm{adj}(\nu)\in\Qf(p^2\Delta)$, by Lemma~\ref{dist}(ii) (after scaling to determinant $1$). Since $\mathrm{adj}(\nu)\nu=p$, we get $\fq=(\fQ\circ\nu)/p^2$. So $\fQ$ arises from $\nu$ if and only if $\fQ\circ\nu\equiv0\bmod p^2$. This condition depends only on $\nu\Gamma$, and replacing $\nu$ by $\nu\gamma$ permutes $\Qf(\Delta)$.

\emph{Folding back.} Hence the right-hand side equals $\sum_{\fQ\in\Qf(p^2\Delta)}\nu_p(\fQ)\int_{\ell_\fQ}\varphi$. Folding, with $\nu_p$ being $\Gamma$-invariant, gives the claim.
\end{proof}

\begin{lemma}\label{lem:nu}
Let $\fQ\in\Qf(p^2\Delta)$ have $p$-content $p^k$. Then
\[
\nu_p(\fQ)=1\ (k=0),\qquad \nu_p(\fQ)=1+\chi_\Delta(p)\ (k=1),\qquad \nu_p(\fQ)=p+1\ (k\ge2).
\]
\end{lemma}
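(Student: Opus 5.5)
We work locally at $p$: the condition $\fQ\circ\nu\equiv0\bmod p^2$ depends only on the sublattice $\Lambda=\nu\Z^2$ of index $p$. These sublattices are exactly the $p+1$ preimages in $\Z^2$ of the lines $L\subset\mathbb F_p^2$. So $\nu_p(\fQ)$ counts the lines $L$ such that $\fQ$ restricted to $\Lambda_L=\{x\in\Z^2:x\bmod p\in L\}$ is divisible by $p^2$. Both the condition and the count are invariant under $\fQ\mapsto\fQ\circ\gamma$, $\gamma\in\Gamma$, since $\gamma$ permutes the lines. We may therefore move $\fQ$ into a convenient shape, and for each $L$ choose a basis of $\Lambda_L$ of the form $e_1'=$ a lift of a generator of $L$ and $e_2'=p\,e$, with $e\notin L$ modulo $p$. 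The condition becomes
\[
\fQ(e_1')\equiv0\bmod p^2,\qquad p\,B_\fQ(e_1',e)\equiv0\bmod p^2,\qquad p^2\fQ(e)\equiv0\bmod p^2,
\]
where $B_\fQ$ is the associated bilinear form $\fQ(x+y)-\fQ(x)-\fQ(y)$. The last condition is automatic, so $\Lambda_L$ qualifies if and only if $p^2\mid\fQ(v)$ and $p\mid B_\fQ(v,w)$ for all $w$, with $v$ a lift of a generator of $L$.

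\emph{Case $k\ge2$.} Then $\fQ\equiv0\bmod p^2$, every line qualifies, and $\nu_p=p+1$.

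\emph{Case $k=1$.} Write $\fQ=p\fq_1$ with $\fq_1$ primitive at $p$ and of discriminant $\Delta$. The condition becomes $p\mid\fq_1(v)$ and $p\mid B_{\fq_1}(v,\cdot)$, i.e.\ the conditions $\fq_1(v)\equiv0$ and $B_{\fq_1}(v,w)\equiv0\bmod p$ are now only modulo $p$. Here $B_{\fq_1}(v,\cdot)\equiv0\bmod p$ already forces $2\fq_1(v)\equiv0$. We then count lines $L$ that are isotropic for the reduction $\bar\fq_1$ and lie in the radical of its bilinear form.
\begin{itemize}
\item For odd $p$ this is the radical of $\bar\fq_1$. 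It is $0$ if $p\nmid\Delta$ and a single line if $p\mid\Delta$, since $\bar\fq_1\ne0$ and its discriminant vanishes.
\item This gives $1+\chi_\Delta(p)$ only when $\chi_\Delta(p)\in\{0,-1\}$. That looks wrong for $\chi_\Delta(p)=1$, where the answer should be $2$, so the reduction must be redone.
\end{itemize}
The correct reduction uses $\fQ(v+pw)=\fQ(v)+pB_\fQ(v,w)+p^2\fQ(w)$. With $\fQ=p\fq_1$ this equals $p\fq_1(v)+p^2B_{\fq_1}(v,w)+\dots$, so the cross term is automatically divisible by $p^2$. The only condition is $p\mid\fq_1(v)$, i.e.\ $L$ is an isotropic line of $\bar\fq_1$ over $\mathbb F_p$. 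Counting the zeros of a nonzero binary form with discriminant $\Delta$ in $\mathbb P^1(\mathbb F_p)$ gives $1+\chi_\Delta(p)$: two roots, one double root, or none, according to $\Delta$ being a nonzero square, zero, or a non-square modulo $p$. For $p=2$ this uses the Kronecker symbol, with $\Delta\equiv1\bmod8$, $5\bmod8$, or even. For an even $\Delta$ with $\fq_1$ primitive, $B$ is even, and $\bar\fq_1$ is $Au^2+Cv^2=(\alpha u+\beta v)^2$ with a unique zero. For odd $\Delta$, $uv+\dots$ has two zeros or none according as $\Delta\equiv1$ or $5\bmod8$. I expect this $p=2$ bookkeeping to be the most delicate point and would check it directly on $[A,B,C]\bmod2$.

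\emph{Case $k=0$.} Now $\fQ$ is primitive at $p$ with discriminant $p^2\Delta\equiv0\bmod p^2$. Reducing modulo $p$, $\bar\fQ$ is a nonzero form of discriminant $0$, hence $\bar\fQ=c\,\bar\ell^2$ with a unique isotropic line $L_0$, and only $L_0$ can qualify. We move $L_0$ to the span of $(1,0)$, so that $p\mid A$. Since the discriminant $B^2-4AC\equiv0\bmod p$ forces $p\mid B$, primitivity gives $p\nmid C$ (for $p=2$ use $B^2\equiv4AC\bmod 4$, which also gives $2\mid B$). Then $p^2\mid B^2-4AC$ with $p\mid B$ gives $p^2\mid 4AC$, hence $p^2\mid A$ for odd $p$. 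Thus $\fQ(v+pw)$ with $v=(1,0)$ is divisible by $p^2$, because $A\equiv0$ and $pB\equiv0\bmod p^2$, and $\nu_p=1$. For $p=2$ the deduction $4\mid A$ is less immediate, since we only get $16\mid B^2-4AC$ with $B=2B'$, i.e.\ $4\mid B'^2-AC$. We would handle it by further normalising with $T^j$, i.e.\ choosing the lift of $L_0$ appropriately, using the freedom $A\mapsto A+2B'j+Cj^2$ to make $A\equiv0\bmod4$. This, together with the $p=2$ count in case $k=1$, is the step I expect to require the most care.
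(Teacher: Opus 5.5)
\textbf{Gap in the case $k=0$, $p=2$.} Your route is the paper's: reduce to the condition $\fQ(\Lambda)\subset p^2\Z$ on index-$p$ lattices, count isotropic lines mod $p$, and split by content. The cases $k\ge2$, $k=1$ and $k=0$ with $p$ odd are fine. For $k=1$ this holds once you discard your first reduction, which wrongly kept a condition $p\mid B_{\fq_1}(v,\cdot)$. For $k=0$, $p=2$ you show that only $L_0=\langle(1,0)\rangle$ can qualify. But you never show that it does, i.e.\ that $4\mid A$, so you only get $\nu_2(\fQ)\le1$.

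\emph{Your starting congruence is wrong.} You do not ``only get $16\mid B^2-4AC$''. What you have is the exact identity $B'^2-AC=\Delta$, where $B=2B'$, and $\Delta$ may be odd.

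\emph{Your proposed fix cannot work.} Since $B$ is even, $2\mid B_\fQ(v,\cdot)$, so $\fQ(v+2w)\equiv\fQ(v)\bmod4$. Thus $\fQ(v)\bmod 4$ is the same for every lift $v$ of $L_0$, as it must be, since the condition depends only on $\Lambda$. Concretely:
\begin{itemize}
\item $T^j$ fixes $A=\fQ(1,0)$.
\item Your substitution $A\mapsto A+2B'j+Cj^2$ comes from $(1,0)\mapsto(1,j)$. For even $j$ it changes $A$ by a multiple of $4$. For odd $j$ it leaves $L_0$, and the new $A$ is odd.
\end{itemize}
So if $4\nmid A$ there is nothing to renormalise; you must prove $4\mid A$.

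\emph{The missing input is that $\Delta$ is a discriminant}, so $\Delta\equiv0,1\bmod4$. Your proof never uses this. With $A$ even and $C$ odd:
\begin{itemize}
\item if $\Delta\equiv1\bmod4$, then $B'$ is odd and $AC=B'^2-\Delta\equiv0\bmod4$;
\item if $4\mid\Delta$, then $B'^2\equiv AC\equiv0\bmod2$, so $B'$ is even and again $4\mid AC$.
\end{itemize}
Hence $4\mid A$. This is exactly the paper's argument, with the roles of $A$ and $C$ exchanged.

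The hypothesis is genuinely needed. The form $[2,4,1]$ has discriminant $8=4\cdot2$ and $2$-content $1$. Its only candidate line is $\langle(1,0)\rangle$, and $\fQ(1,0)=2$, so $\nu_2=0$ there, because $\Delta=2$ is not a discriminant.
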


\begin{proof}
For a lattice $\Lambda\supset p\Z^2$ of index $p$, the condition reads $\fQ(\Lambda)\subset p^2\Z$; the middle coefficient is covered because it equals $\fQ(v_1+v_2)-\fQ(v_1)-\fQ(v_2)$.

\emph{$k\ge2$.} Every $\Lambda$ works.

\emph{$k=1$.} Write $\fQ=p\fQ_1$ with $\fQ_1\in\Qf(\Delta)$ primitive at $p$. Since $\fQ_1(v+pw)\equiv \fQ_1(v)\bmod p$, the condition is that the line $\Lambda/p\Z^2$ is isotropic for $\fQ_1\bmod p\ne0$. The number of isotropic lines is:
\begin{itemize}
\item for odd $p$: $1+(\frac\Delta p)$ if $p\nmid\Delta$, and $1$ if $p\mid\Delta$ (then the form has rank one);
\item for $p=2$, writing $\fQ_1=[a,b,c]$: two if $b$ is odd and $ac$ is even ($\Delta\equiv1\bmod8$), none if $a,b,c$ are odd ($\Delta\equiv5\bmod 8$), and one if $b$ is even (the form is a square mod $2$).
\end{itemize}
In all cases this equals $1+\chi_\Delta(p)$.

\emph{$k=0$: only the radical line can work.} $\fQ\bmod p$ is non-zero and degenerate, since $p\mid p^2\Delta$. Choose a basis with $\fQ\equiv au^2$, $p\nmid a$; then $p\mid b$ and $p\mid c$. From $\fQ(v+pw)=\fQ(v)+pB'(v,w)+p^2\fQ(w)$, with $B'$ the polar form:
\begin{itemize}
\item for odd $p$ the line must lie in the radical $\langle e_2\rangle$;
\item for $p=2$, $B'$ is even since $b$ is even, and $\fQ(v)\equiv0\bmod2$ again forces $v\in\langle e_2\rangle$.
\end{itemize}

\emph{$k=0$: the radical line works.} It remains to check $p^2\mid c$.
\begin{itemize}
\item For odd $p$: from $p^2\mid b^2-4ac$ and $p\mid b$ we get $p^2\mid ac$, hence $p^2\mid c$.
\item For $p=2$: write $b=2b'$, so $b'^2-ac=\Delta$. If $\Delta\equiv1\bmod 4$, then $b'$ is odd (as $c$ is even), so $ac=b'^2-\Delta\equiv0\bmod4$. If $4\mid\Delta$, then $b'$ is even and again $4\mid ac$. In both cases $4\mid c$.\qedhere
\end{itemize}
\end{proof}

\begin{proof}[Proof of Theorem~\ref{thm:heckerel}]
\emph{The relation.} Split $\Qf(p^2\Delta)$ by $p$-content into three parts: the $p$-primitive forms; $p$ times the $p$-primitive forms in $\Qf(\Delta)$; and $p^2\Qf(\Delta/p^2)$. Note $C_{p\fQ}=C_\fQ$. By Lemmas~\ref{lem:unf} and~\ref{lem:nu},
\[
\Tr_\Delta(T_pF)=\big[\Tr_{p^2\Delta}-\Tr_\Delta\big](F)+(1+\chi_\Delta(p))\big[\Tr_\Delta-\Tr_{\Delta/p^2}\big](F)+(p+1)\Tr_{\Delta/p^2}(F),
\]
which is the relation.

\emph{Factorisation at one prime.} Let $F\in\{1,u_j,E(\cdot,s)\}$, so $T_mF=\tau(m)F$ with $\tau(m)=\sigma(m)$, $\sqrt m\lambda_j(m)$ or $D_m(s)$, respectively. These satisfy $\tau(p^{a+1})=\tau(p)\tau(p^a)-p\tau(p^{a-1})$. Write $f=p^af'$ with $p\nmid f'$, $\Delta'=f'^2D_0$ and $b_i=\Tr_{p^{2i}\Delta'}(F)$.
\begin{itemize}
\item $\Delta'/p^2$ is not a discriminant: for odd $p$, $p^2\nmid\Delta'$; for $p=2$, either $D_0$ is odd, or $D_0=4m$ with $m\equiv2,3\bmod4$ and $\Delta'/4=f'^2m$.
\item $\chi_{\Delta'}(p)=\chi_{D_0}(p)$, using $f'^2\equiv1\bmod8$ if $p=2$; and $\chi_{p^{2i}\Delta'}(p)=0$ for $i\ge1$.
\end{itemize}
The relation therefore gives
\[
b_1=(\tau(p)-\chi_{D_0}(p))b_0,\qquad b_{i+1}=\tau(p)b_i-pb_{i-1}\ (i\ge1),
\]
whence $b_a=b_0(\tau(p^a)-\chi_{D_0}(p)\tau(p^{a-1}))$.

\emph{All primes.} Induct on the number of prime factors of $f$, using the multiplicativity of $\tau$ and $\chi_{D_0}$.
\end{proof}

\begin{proposition}[Main term]\label{prop:main}
For every non-square discriminant $D=f^2D_0$, the first term in Theorem~\ref{thm:nsexact} equals
\[
\frac6\pi L(1,\chi_{D_0})\,\h_{D_0}(f)\,\Psi_0 .
\]
\end{proposition}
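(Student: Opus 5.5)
The first term in Theorem~\ref{thm:nsexact} is $\frac3\pi H(0)\Tr_D(1)$. By Lemma~\ref{hbounds}(ii), $H(0)=\delta\int\Phi=\Psi_0/\sqrt D=\Psi_0/(f\sqrt{D_0})$, since $\int\Phi(x/M)\dd x=M\int\Phi$ and $\delta=M/\sqrt D$. So the claim reduces to the identity
\[
\Tr_D(1)=2\sqrt D\,L(1,\chi_{D_0})\,\h_{D_0}(f).
\]
By Theorem~\ref{thm:heckerel} we have $\Tr_D(1)=\Tr_{D_0}(1)\sum_{d\mid f}\mu(d)\chi_{D_0}(d)\sigma(f/d)$. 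By the definition of $\h_{D_0}$, the divisor sum equals $f\,\h_{D_0}(f)$. It therefore suffices to prove $\Tr_{D_0}(1)=2\sqrt{D_0}L(1,\chi_{D_0})$ for fundamental $D_0>1$.

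For fundamental $D_0$, every form in $\Qf(D_0)$ is primitive. Indeed, a content $c>1$ would give $D_0/c^2\equiv0,1\bmod4$, contradicting fundamentality. Hence $\Tr_{D_0}(1)$ is the sum of the lengths $\ell(C_\fq)$ over the $h^+(D_0)$ classes in $\Gamma\backslash\Qf(D_0)$. By Fact~\ref{F:gauss}(iii) each length is $2\log\varepsilon^+_{D_0}$. By Fact~\ref{F:gauss}(iv),
\[
\Tr_{D_0}(1)=2h^+(D_0)\log\varepsilon^+_{D_0}=2\sqrt{D_0}L(1,\chi_{D_0}).
\]
Combining the pieces gives
\[
\frac3\pi\cdot\frac{\Psi_0}{f\sqrt{D_0}}\cdot2\sqrt{D_0}L(1,\chi_{D_0})\,f\,\h_{D_0}(f)=\frac6\pi L(1,\chi_{D_0})\h_{D_0}(f)\Psi_0,
\]
as claimed.

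The main point to check is the normalisation in Fact~\ref{F:gauss}(iii),(iv). The classes in $\Gamma\backslash\Qf(D_0)$ must be counted with orientation under $\PSL$, so that their number is the narrow class number $h^+$; the forms $\fq$ and $-\fq$ are distinct elements of $\Qf(D_0)$ and both are counted. The length $2\log\varepsilon^+$ must be the primitive length of $\Gamma_\fq\backslash\ell_\fq$. The stabiliser of $\fq$ in $\PSL$ is generated by the automorph attached to $(t,u)$, whose trace is $t$. Its translation length is $2\operatorname{arccosh}(t/2)=2\log\varepsilon^+$. This confirms the count and the factor $2$.
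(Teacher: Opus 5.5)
Your proof is correct and follows the paper's own argument: $H(0)=\Psi_0/\sqrt D$ by Lemma~\ref{hbounds}(ii), then $\Tr_D(1)=f\,\h_{D_0}(f)\Tr_{D_0}(1)$ by Theorem~\ref{thm:heckerel}, and finally $\Tr_{D_0}(1)=2h^+(D_0)\log\varepsilon^+_{D_0}=2\sqrt{D_0}L(1,\chi_{D_0})$ by Fact~\ref{F:gauss}(iii),(iv). Your extra remarks make explicit two points the paper leaves implicit: why forms of fundamental discriminant are primitive, and why the generating automorph has translation length $2\log\varepsilon^+_{D_0}$.
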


\begin{proof}
$\Qf(D_0)$ consists of primitive forms, so by Fact~\ref{F:gauss}(iii),(iv), $\Tr_{D_0}(1)=h^+(D_0)\cdot2\log\varepsilon^+_{D_0}=2\sqrt{D_0}L(1,\chi_{D_0})$. By Theorem~\ref{thm:heckerel}, $\Tr_D(1)=f\h_{D_0}(f)\Tr_{D_0}(1)$. Finally $H(0)=\delta\int\Phi=\Psi_0/\sqrt D$ by Lemma~\ref{hbounds}(ii), and $\sqrt D=f\sqrt{D_0}$.
\end{proof}

\subsection{Periods at fundamental discriminants}\label{sec:fundper}
Let $D_0>1$ be fundamental. As in Subsection~\ref{sec:sqper}, write $u_j(z)=\sqrt y\sum_{m\ne0}\rho_j(m)K_{it_j}(2\pi|m|y)e(mx)$, so that $\rho_j(m)=\rho_j(1)\lambda_j(m)$, and put $\alpha_j=|\rho_j(1)|^2/\cosh(\pi t_j)$. Every $t_j$ is real and $t_j>3$, by Roelcke's bound $\frac14+t_1^2\ge3\pi^2/2$ \cite[Thm.~11.4]{Iw} (numerically $\frac14+t_1^2=91.14\ldots$, see \cite[p.~167]{Iw}).

\begin{fact}[Cycle integrals of cusp forms]\label{F:DIT}
For even $u_j$,
\[
|\Tr_{D_0}(u_j)|^2=\frac{|\rho_j(1)|^2}{4\pi}\sqrt{D_0}\,\big|\Gamma(\tfrac14+\tfrac{it_j}2)\big|^4\,L(\tfrac12,u_j)\,L(\tfrac12,u_j\otimes\chi_{D_0}),
\]
and both central values are non-negative.

\emph{Reference and derivation.} Let $\phi=2\sqrt y\sum_{m\ne0}\lambda_j(m)K_{it_j}(2\pi|m|y)e(mx)$ be the Hecke-normalised form of \cite[(5.7)]{DIT}. Then $u_j=\frac12\rho_j(1)\phi$, so $\langle\phi,\phi\rangle=4|\rho_j(1)|^{-2}$; in \cite{DIT} the inner product is the plain integral over a fundamental domain of $\Gamma$, as here.
\begin{itemize}
\item \cite[Theorem~4]{DIT} with $d'=1$ and $d=D_0$ (trivial genus character; this case occurs in \cite[\S5]{DIT}, e.g.\ ``$D=12=(1)(12)$'') gives $12\sqrt\pi\,D_0^{3/4}\,b(1)b(D_0)=\langle\phi,\phi\rangle^{-1}\Tr_{D_0}(\phi)$. Here the cycle integrals in \cite{DIT} run over curves $\partial F_A$ whose images in $\Gamma\backslash\Hh$ are the closed geodesics $C_A$, and the narrow ideal classes $A$ correspond to $\Gamma\backslash\Qf(D_0)$.
\item The Baruch--Mao formula \cite{BM}, in the form \cite[(5.17)]{DIT}, gives $12\pi d\,|b(d)|^2=\langle\phi,\phi\rangle^{-1}|\Gamma(\frac14+\frac{it_j}2)|^2L(\frac12,\phi\otimes\chi_d)$ for $d\in\{1,D_0\}$.
\end{itemize}
Eliminating $b(1)b(D_0)$ gives $|\Tr_{D_0}(\phi)|^2=\pi^{-1}\sqrt{D_0}\,|\Gamma(\frac14+\frac{it_j}2)|^4L(\frac12,u_j)L(\frac12,u_j\otimes\chi_{D_0})$, and multiplying by $\frac14|\rho_j(1)|^2$ gives the identity. Non-negativity: the left-hand side of \cite[(5.17)]{DIT} is $\ge0$ and the Gamma factor is $>0$. Only the bound
\[
|\Tr_{D_0}(u_j)|^2\ll\sqrt{D_0}\,\alpha_j(1+t_j)^{-1}L(\tfrac12,u_j)L(\tfrac12,u_j\otimes\chi_{D_0})
\]
is used below; it follows from the identity by Stirling's formula, $\cosh(\pi t)|\Gamma(\frac14+\frac{it}2)|^4\ll(1+|t|)^{-1}$. For $D_0=1$ the identity becomes the formula for $|\J_j|^2$ in Lemma~\ref{mellin}, which is a consistency check. Both identities were also checked numerically for one Maass form (Subsection~\ref{sec:numchecks}).
\end{fact}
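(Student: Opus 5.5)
The statement to prove is Fact~\ref{F:DIT}: the explicit formula for $|\Tr_{D_0}(u_j)|^2$ and the bound derived from it. The plan is to obtain the identity by combining two formulas from \cite{DIT}, both expressed through the same Fourier coefficients $b(d)$ of the weight-$\frac12$ Shimura--Shintani lift of $u_j$, and then to eliminate those coefficients. We work with the Hecke-normalised form $\phi=2\sqrt y\sum_{m\ne0}\lambda_j(m)K_{it_j}(2\pi|m|y)e(mx)$.

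\emph{Step 1 (normalisation).} Comparing Fourier expansions gives $u_j=\frac12\rho_j(1)\phi$. Since $u_j$ has norm one, $\langle\phi,\phi\rangle=4|\rho_j(1)|^{-2}$. We must check that \cite{DIT} uses the plain integral over a fundamental domain, as here, and not a normalised measure; otherwise a factor $\pi/3$ enters.

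\emph{Step 2 (trace formula).} Apply \cite[Theorem~4]{DIT} with $d'=1$ and $d=D_0$, that is, with the trivial genus character. This gives $\langle\phi,\phi\rangle^{-1}\Tr_{D_0}(\phi)=12\sqrt\pi\,D_0^{3/4}b(1)b(D_0)$. Two identifications are needed here:
\begin{itemize}
\item the cycle integrals over $\partial F_A$ in \cite{DIT} project to the closed geodesics $C_\fq$ with arc length $\dd s$;
\item the narrow classes $A$ correspond bijectively to $\Gamma\backslash\Qf(D_0)$. This holds because $D_0$ is fundamental, so all forms are primitive, by Fact~\ref{F:gauss}(iv).
\end{itemize}

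\emph{Step 3 (Baruch--Mao).} Use \cite{BM} in the form \cite[(5.17)]{DIT}: $12\pi d|b(d)|^2=\langle\phi,\phi\rangle^{-1}|\Gamma(\frac14+\frac{it_j}2)|^2L(\frac12,\phi\otimes\chi_d)$ for $d=1$ and $d=D_0$. Then square the relation from Step~2, substitute $|b(1)|^2|b(D_0)|^2$ from these two formulas, and simplify. The constants $144\pi D_0^{3/2}$ and $(12\pi)^2D_0$ leave $\pi^{-1}\sqrt{D_0}$, and one $\langle\phi,\phi\rangle^{2}$ cancels against the $\langle\phi,\phi\rangle^{-2}$. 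This gives $|\Tr_{D_0}(\phi)|^2=\pi^{-1}\sqrt{D_0}|\Gamma(\frac14+\frac{it_j}2)|^4L(\frac12,u_j)L(\frac12,u_j\otimes\chi_{D_0})$. Multiplying by $\frac14|\rho_j(1)|^2$ yields the identity for $u_j$.

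\emph{Step 4 (non-negativity and the bound).} Non-negativity of both central values follows from (5.17): its left side is $\ge0$ and the Gamma factor is positive. For the bound, Stirling gives $|\Gamma(\frac14+\frac{it}2)|^4\asymp(1+|t|)^{-1}e^{-\pi|t|}$. Hence $\cosh(\pi t)|\Gamma|^4\ll(1+|t|)^{-1}$, and $|\rho_j(1)|^2=\alpha_j\cosh(\pi t_j)$ converts the identity into the stated inequality.

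As a consistency check, setting $D_0=1$ should recover Lemma~\ref{mellin}'s formula for $|\J_j|^2$.

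The main obstacle is Steps~2--3: matching the normalisations in \cite{DIT}. This covers the measure on the fundamental domain, the factor $D_0^{3/4}$, whether their traces weight classes by $2$ or use parametrisations other than arc length, and the precise meaning of the genus character and of $b(d)$. Each convention must be checked against the source, because any mismatch changes the constant. The constant is irrelevant for the bound actually used, but it matters for the exact identity and for the $D_0=1$ check.
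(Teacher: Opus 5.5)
Your proposal is correct and follows the paper's derivation step for step. The steps are: the normalisation $u_j=\frac12\rho_j(1)\phi$; \cite[Theorem~4]{DIT} with the trivial genus character; the Baruch--Mao formula \cite[(5.17)]{DIT} for $d\in\{1,D_0\}$; elimination of $b(1)b(D_0)$; and Stirling together with $|\rho_j(1)|^2=\alpha_j\cosh(\pi t_j)$ for the bound. One small point: the fact that every form of fundamental discriminant is primitive is a standard fact rather than the content of Fact~\ref{F:gauss}(iv), but this does not affect the argument.
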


\begin{fact}[Cycle integrals of Eisenstein series; Hecke, see {\cite{Za}}]\label{F:Eis}
For $\Re s>1$, and by continuation for all $s$,
\[
\Tr_{D_0}\big(E(\cdot,s)\big)=2\,\frac{\sqrt\pi\,\Gamma(\frac s2)}{\Gamma(\frac{s+1}2)}\Big(\frac{D_0}4\Big)^{s/2}\frac{\zeta(s)L(s,\chi_{D_0})}{\zeta(2s)} .
\]
\emph{Derivation and checks.} Unfolding $E$ gives $\int_{C_\fq}E(\cdot,s)=\frac{\sqrt\pi\Gamma(s/2)}{\Gamma((s+1)/2)}(\frac{\sqrt{D_0}}2)^s\sum_v|\fq(v)|^{-s}$, with $v$ running over primitive vectors modulo $\pm$ and $\Gamma_\fq$. Summing over classes gives the Dirichlet series of the number of representations, which is $2\zeta(s)L(s,\chi_{D_0})/\zeta(2s)$ \cite{Za}.
\begin{itemize}
\item The residue at $s=1$ is $\frac6\pi\sqrt{D_0}L(1,\chi_{D_0})=\frac3\pi\Tr_{D_0}(1)$, as it must be.
\item We verified the identity numerically at $s=2$ for $18$ fundamental discriminants $D_0\le65$, odd and even, to at least $10$ digits, by integrating the Fourier expansion of $E$ along the geodesics (Subsection~\ref{sec:numchecks}).
\end{itemize}
For $D_0=1$ the analogue is $\J(s)$ in \eqref{Jdef}, with $\zeta(s)^2$ in place of $\zeta(s)L(s,\chi_{D_0})$.
\end{fact}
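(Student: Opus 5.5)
The plan is to unfold $E(\cdot,s)$ along each closed geodesic for $\Re s>1$, where everything converges absolutely, and then sum over classes. Write $E(w,s)=\sum_{v}\Im(w)^s|v_1w+v_2|^{-2s}$, the sum over primitive $v=(v_1,v_2)$ modulo $\pm1$; this is the standard rewriting of $\sum_{\Gamma_\infty\backslash\Gamma}\Im(\gamma w)^s$ via bottom rows. Fix $\fq\in\Qf(D_0)$. Since $\Gamma_\fq$ is infinite cyclic (Fact~\ref{F:gauss}(i)) and acts freely on primitive vectors, the period splits as
\[
\int_{C_\fq}E(\cdot,s)\dd s=\sum_{v\in\Gamma_\fq\backslash\{\text{prim}\}/\pm}\int_{\ell_\fq}\Im(w)^s|v_1w+v_2|^{-2s}\dd s(w).
\]

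For each $v$, pick $\gamma\in\mathrm{SL}_2(\Z)$ with bottom row $v$. Then $\Im(w)^s|v_1w+v_2|^{-2s}=\Im(\gamma w)^s$, and the integral becomes $\int_{\gamma\ell_\fq}\Im(z)^s\dd s$. By Lemma~\ref{dist}(ii), $\gamma\ell_\fq=\ell_{\fq\circ\gamma^{-1}}$ is the semicircle of diameter $\sqrt{D_0}/|A'|$, where $A'=(\fq\circ\gamma^{-1})(1,0)=\fq(v_2,-v_1)$ up to sign. This coefficient is nonzero because $D_0$ is not a square. Parametrising the semicircle of radius $\rho$ by $z=c+\rho(\tanh\tau+i\,\mathrm{sech}\,\tau)$ gives
\[
\int\Im(z)^s\dd s=\rho^s\int_\R(\cosh\tau)^{-s}\dd\tau=\rho^s\sqrt\pi\,\Gamma(\tfrac s2)/\Gamma(\tfrac{s+1}2),
\]
which is the integral already used in Lemma~\ref{kernel}. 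With $\rho=\sqrt{D_0}/(2|\fq(v')|)$, where $v'=(v_2,-v_1)$ runs over the same primitive vectors, this yields the per-class formula recorded in the statement.

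Next, sum over $[\fq]\in\Gamma\backslash\Qf(D_0)$. The quantity $\sum_{[\fq]}\sum_{v\in\Gamma_\fq\backslash\mathrm{prim}/\pm}|\fq(v)|^{-s}$ is $\sum_{m\ge1}r^*(m)m^{-s}$, where $r^*(m)$ counts proper representations of $\pm m$ by all classes, taken modulo automorphs. By the classical count (Dirichlet, see \cite{Za}), the number of proper representations of $m$, summed over forms modulo automorphs, equals the number of $b\bmod 2m$ with $b^2\equiv D_0\bmod 4m$. Here we must account for both signs of the value, since narrow classes and $\fq\mapsto-\fq$ both occur; this should be the origin of the overall factor $2$. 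Because $D_0$ is fundamental, this local count is multiplicative and equals $\sum_{d\mid m}\mu^2$-type weights whose generating series is $\zeta(s)L(s,\chi_{D_0})/\zeta(2s)$. This is verified by computing the Euler factor at each $p$: the count is $1+\chi_{D_0}(p)$ for $m=p$ and $p\nmid D_0$, $0$ for $p^2\mid m$ when $p\mid D_0$, and so on, which matches $(1-p^{-2s})/((1-p^{-s})(1-\chi_{D_0}(p)p^{-s}))$. Combining with the constant $(\sqrt{D_0}/2)^s=(D_0/4)^{s/2}$ gives the formula for $\Re s>1$. Both sides are meromorphic in $s$: the left side because $E$ is and the geodesics are compact, the right side evidently. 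The identity therefore extends to all $s$.

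The main obstacle is the bookkeeping in the representation count, namely getting the factor $2$ and the orbit structure exactly right. This involves narrow versus wide classes, the sign of $\fq(v)$, the modding by $\pm$ on $v$ versus $\pm1$ in $\PSL$, and the prime $2$ for even $D_0$. As a check, I would take the residue at $s=1$. It must equal $\frac3\pi\Tr_{D_0}(1)=\frac6\pi\sqrt{D_0}L(1,\chi_{D_0})$ by Fact~\ref{F:gauss}(iv), since $\mathrm{Res}\,E=3/\pi$. The right side has residue $2\cdot\frac{\sqrt\pi\,\Gamma(1/2)}{\Gamma(1)}\cdot\frac{\sqrt{D_0}}2\cdot\frac{L(1,\chi_{D_0})}{\zeta(2)}=\frac6\pi\sqrt{D_0}L(1,\chi_{D_0})$, which fixes the normalisation.
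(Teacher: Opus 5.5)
Your proposal is correct and follows the same route as the paper. You unfold $E(\cdot,s)$ along each closed geodesic to get $\frac{\sqrt\pi\,\Gamma(s/2)}{\Gamma((s+1)/2)}(\sqrt{D_0}/2)^s\sum_v|\fq(v)|^{-s}$, then sum over classes via the classical count $\#\{b\bmod 2m:\ b^2\equiv D_0\bmod 4m\}$, with the factor $2$ coming from the bijection $\fq\mapsto-\fq$ on classes, and the residue at $s=1$ serving as a check. The details you supply and the paper delegates to Zagier (the semicircle integral, the Euler factors $(1+p^{-s})/(1-\chi_{D_0}(p)p^{-s})$, including $p=2$, and the meromorphic continuation) all check out.
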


\begin{fact}[Inputs on $L$-functions]\label{F:L}
Let $D_0>1$ be a fundamental discriminant, and let $L(s,\mathrm{sym}^2u_j)$ be the symmetric square $L$-function of $u_j$ (the adjoint $L$-function in \cite{AW}).
\begin{enumerate}[label=(\alph*)]
\item \emph{Andersen--Wu} \cite[Theorem~4.1]{AW}, with $N=1$ and $q=D_0$, and \emph{Petrow--Young} \cite[Theorem~1.1]{PY}: for $T\ge1$,
\[
\sum_{T\le t_j<T+1}\frac{L(\tfrac12,u_j\otimes\chi_{D_0})^3}{L(1,\mathrm{sym}^2u_j)}\ll_\varepsilon(D_0T)^{1+\varepsilon},\qquad |L(\tfrac12+it,\chi_{D_0})|\ll_\varepsilon(D_0(1+|t|))^{1/6+\varepsilon}.
\]
The sum runs over all Hecke--Maass cusp forms for $\Gamma$, even and odd. All its terms are non-negative: $L(1,\mathrm{sym}^2u_j)>0$ by (c), and $L(\frac12,u_j\otimes\chi_{D_0})\ge0$ by Guo's theorem \cite{Gu}, since the automorphic representation attached to $u_j\otimes\chi_{D_0}$ has trivial central character. In \cite{AW} the theorem is stated for $T\gg1$, i.e.\ for $T\ge T_0$ with an absolute constant $T_0$; for the finitely many $u_j$ with $t_j<T_0+1$, \cite[Corollary~4.2]{AW} gives $L(\frac12,u_j\otimes\chi_{D_0})\ll_\varepsilon D_0^{1/3+\varepsilon}$, so the first bound holds for all $T\ge1$. For odd $D_0$ both bounds are due to Young \cite[Theorem~1.1, (1.7), (1.8)]{Yo}.
\item \emph{Motohashi} \cite{Mo}: $\sum_{t_j\le T}\alpha_jL(\frac12,u_j)^2\ll T^2\log T$.
\item \emph{Rankin--Selberg}: $\alpha_j=c/L(1,\mathrm{sym}^2u_j)$ with an absolute constant $c>0$. This is the remark after \cite[(8.46)]{Iw}, p.~120, with $N=1$; it follows from the residue (8.12) of the Rankin--Selberg integral (8.10) in \cite{Iw}. In Iwaniec's normalisation $|\nu_j(1)|^2=4\pi|\rho^{\mathrm{Iw}}_j(1)|^2/\cosh(\pi t_j)$ and $u_j=\sum\rho^{\mathrm{Iw}}_j(n)\,2\sqrt{|n|y}\,K_{it_j}(2\pi|n|y)e(nx)$, so $\rho_j(1)=2\rho^{\mathrm{Iw}}_j(1)$ and $\alpha_j=|\nu_j(1)|^2/\pi$. (The computation gives $c=2$; we checked this numerically for the first even form, to the accuracy $1.2\%$ of a truncated Euler product.)
\item \emph{Deshouillers--Iwaniec}: Fact~\ref{F:LS} with $q=1$, $N=\frac12$ and the single coefficient $m=1$ gives $\sum_{t_j\le T}\alpha_j\ll T^2$.
\end{enumerate}
\end{fact}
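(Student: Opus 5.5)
Fact~\ref{F:L} collects results from the literature, so the plan is not to re-prove the deep inputs (the cubic moment of Andersen--Wu and Young, the Weyl-type bound of Petrow--Young, Motohashi's fourth-moment-type estimate). Instead I would verify that each cited statement specialises correctly to the normalisations of this paper, and derive the few glued-on claims from what is already established. I would go item by item, in the order (c), (d), (a), (b), since (c) fixes the normalisation of $\alpha_j$ that the other items use.

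For (c), I would start from the Rankin--Selberg unfolding of $\langle |u_j|^2,E(\cdot,s)\rangle$, as in \cite[(8.10)--(8.12)]{Iw}. Taking the residue at $s=1$ gives $\|u_j\|^2=1$ as a constant times $|\rho^{\mathrm{Iw}}_j(1)|^2\cosh(\pi t_j)^{-1}L(1,\mathrm{sym}^2u_j)$. Then I would translate normalisations: comparing the Fourier expansion of $u_j$ in Subsection~\ref{sec:fundper} with Iwaniec's gives $\rho_j(1)=2\rho^{\mathrm{Iw}}_j(1)$, and hence $\alpha_j=|\nu_j(1)|^2/\pi=c/L(1,\mathrm{sym}^2u_j)$. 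Positivity of $L(1,\mathrm{sym}^2u_j)$ is then immediate, since $\alpha_j>0$ because $\rho_j(1)\ne0$ for a Hecke eigenform.

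For (d), I would apply Fact~\ref{F:LS} with $q=1$, $K=T$, $N=\frac12$, and the single coefficient $a_1=1$. This yields $\sum_{t_j\le T}\alpha_j\ll T^2+1$.

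For (a), I would specialise \cite[Theorem~4.1]{AW} to level $N=1$ and modulus $q=D_0$. The point to check is that the weight $1/L(1,\mathrm{sym}^2u_j)$ there agrees, by (c), with the harmonic weight up to a constant. Non-negativity of the summands follows from (c) together with Guo's theorem, which applies because $u_j\otimes\chi_{D_0}$ has trivial central character ($\chi_{D_0}^2$ is principal). The step I expect to be the main obstacle is the range restriction: \cite{AW} is stated only for $T\ge T_0$. To cover $1\le T<T_0$, I would bound each of the finitely many relevant terms individually. Corollary~4.2 of \cite{AW} gives $L(\frac12,u_j\otimes\chi_{D_0})^3\ll D_0^{1+\varepsilon}$, and $1/L(1,\mathrm{sym}^2u_j)$ is bounded for these fixed forms. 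The bound for $L(\frac12+it,\chi_{D_0})$ is quoted directly from \cite{PY}, and for odd $D_0$ both bounds can alternatively be read off from \cite[(1.7),(1.8)]{Yo}.

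For (b), I would quote Motohashi \cite{Mo}, checking only that his spectral weight coincides with $\alpha_j$ up to an absolute constant, again via (c).
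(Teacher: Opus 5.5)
Your proposal is correct and follows the paper's own justification item by item. The matching steps are the Rankin--Selberg residue with $\rho_j(1)=2\rho^{\mathrm{Iw}}_j(1)$ for (c), the large sieve with $N=\frac12$ and the single coefficient $m=1$ for (d), and, for (a), Guo's theorem for non-negativity together with \cite[Corollary~4.2]{AW} for the finitely many forms with $t_j<T_0+1$. Your extra remarks, that $1/L(1,\mathrm{sym}^2u_j)$ is bounded for those finitely many forms and that the weights match via (c), are harmless. Conversion of the weights via (c) is really only needed later, in Step~3 of Proposition~\ref{prop:ms}.
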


\begin{proposition}\label{prop:ms}
Let $D_0>1$ be a fundamental discriminant and $T\ge1$. Then
\[
\sum_{\substack{T<t_j\le2T\\ u_j\ \mathrm{even}}}|\Tr_{D_0}(u_j)|^2\ll_\varepsilon D_0^{5/6+\varepsilon}T^{1+\varepsilon}.
\]
\end{proposition}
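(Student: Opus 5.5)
Start from the bound of Fact~\ref{F:DIT},
\[
|\Tr_{D_0}(u_j)|^2\ll\sqrt{D_0}\,\alpha_j(1+t_j)^{-1}L(\tfrac12,u_j)L(\tfrac12,u_j\otimes\chi_{D_0}),
\]
so that, for $T<t_j\le2T$, the sum to be bounded is
\[
\ll\sqrt{D_0}\,T^{-1}\sum_{T<t_j\le2T}\alpha_jL(\tfrac12,u_j)L(\tfrac12,u_j\otimes\chi_{D_0}).
\]
Extending the sum to odd forms is harmless, because all terms are non-negative (Fact~\ref{F:DIT} and Fact~\ref{F:L}(a),(c)). The two $L$-values will be separated by Hölder's inequality with exponents $(3/2,3)$, after distributing the weight $\alpha_j$ as $\alpha_j=\alpha_j^{2/3}\alpha_j^{1/3}$:
\[
\sum\alpha_jL(\tfrac12,u_j)L(\tfrac12,u_j\otimes\chi_{D_0})\le\Big(\sum\alpha_jL(\tfrac12,u_j)^{3/2}\Big)^{2/3}\Big(\sum\alpha_jL(\tfrac12,u_j\otimes\chi_{D_0})^3\Big)^{1/3}.
\]

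The second factor is handled by Fact~\ref{F:L}(c), which gives $\alpha_j=c/L(1,\mathrm{sym}^2u_j)$, together with Fact~\ref{F:L}(a), summed over the $\ll T$ unit intervals in $(T,2T]$. This yields $\sum\alpha_jL(\frac12,u_j\otimes\chi_{D_0})^3\ll(D_0T)^{1+\varepsilon}T$, so the second factor is $\ll D_0^{1/3+\varepsilon}T^{2/3+\varepsilon}$.

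For the first factor apply Hölder once more, writing $L^{3/2}=L^{3/2}\cdot1$ with exponents $(4/3,4)$. This needs a fourth moment of $L(\frac12,u_j)$, which is not among the inputs. The better route is Cauchy--Schwarz: $\sum\alpha_jL^{3/2}\le(\sum\alpha_jL^2)^{1/2}(\sum\alpha_jL)^{1/2}$. Here Motohashi (Fact~\ref{F:L}(b)) bounds $\sum\alpha_jL^2\ll T^{2+\varepsilon}$, and $\sum\alpha_jL\le(\sum\alpha_jL^2)^{1/2}(\sum\alpha_j)^{1/2}\ll T^{2+\varepsilon}$ by Fact~\ref{F:L}(b),(d). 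Hence $\sum\alpha_jL(\frac12,u_j)^{3/2}\ll T^{2+\varepsilon}$, and the first factor is $\ll T^{4/3+\varepsilon}$.

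Combining the two factors gives $\sum\alpha_jLL_{\chi}\ll D_0^{1/3+\varepsilon}T^{2+\varepsilon}$. Multiplying by $\sqrt{D_0}T^{-1}$ gives $D_0^{5/6+\varepsilon}T^{1+\varepsilon}$, as claimed. The exponent $\frac56=\frac12+\frac13$ confirms that this is the intended decomposition.

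The main point requiring care is the bookkeeping of the weights in the Hölder step. The cubic moment in Fact~\ref{F:L}(a) carries exactly the harmonic weight $1/L(1,\mathrm{sym}^2u_j)\asymp\alpha_j$, so the weight must be split as $\alpha_j^{2/3}\cdot\alpha_j^{1/3}$ with no leftover power of $t_j$. The restriction $t_j>3$ (Roelcke) ensures that no exceptional or small-$T$ issues arise, and Stirling's formula supplies the factor $(1+t_j)^{-1}$.
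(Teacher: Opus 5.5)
Your argument is correct and is essentially the paper's. Both use exactly the same inputs: the bound of Fact~\ref{F:DIT}, Motohashi's second moment, the cubic moment with $\alpha_j=c/L(1,\mathrm{sym}^2u_j)$, and $\sum\alpha_j\ll T^2$. The paper applies Cauchy--Schwarz first and then H\"older with exponents $(3,\frac32)$ to $\sum\alpha_jL(\frac12,u_j\otimes\chi_{D_0})^2$, while you apply H\"older $(\frac32,3)$ first. Both orderings give the same bound $\big(\sum\alpha_jL_j^2\big)^{1/2}\big(\sum\alpha_j\big)^{1/6}\big(\sum\alpha_j(L_j^\chi)^3\big)^{1/3}$, where $L_j=L(\frac12,u_j)$ and $L_j^\chi=L(\frac12,u_j\otimes\chi_{D_0})$.

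One aside is mistaken, though you do not use it. H\"older with exponents $(\frac43,4)$ applied to $\alpha_j^{3/4}L_j^{3/2}\cdot\alpha_j^{1/4}$ needs only the second moment, not a fourth. It gives $(\sum\alpha_jL_j^2)^{3/4}(\sum\alpha_j)^{1/4}$, the same bound as your two Cauchy--Schwarz steps.
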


\begin{proof}
Write $L_j=L(\frac12,u_j)$ and $L_j^\chi=L(\frac12,u_j\otimes\chi_{D_0})$; both are $\ge0$. All sums below run over even $u_j$ with $T<t_j\le2T$.

\emph{Step 1: reduction to a mixed moment.} By Fact~\ref{F:DIT}, the left-hand side is $\ll\sqrt{D_0}\,T^{-1}\sum\alpha_jL_jL_j^\chi$.

\emph{Step 2: Cauchy--Schwarz.} $\sum\alpha_jL_jL^\chi_j\le(\sum\alpha_jL_j^2)^{1/2}(\sum\alpha_j(L_j^\chi)^2)^{1/2}$, and the first factor is $\ll(T^2\log T)^{1/2}$ by Fact~\ref{F:L}(b).

\emph{Step 3: Hölder.} With exponents $3$ and $\frac32$, and Fact~\ref{F:L}(a),(c),(d),
\[
\sum\alpha_j(L_j^\chi)^2\le\Big(\sum\alpha_j\Big)^{1/3}\Big(\sum\alpha_j(L_j^\chi)^3\Big)^{2/3}\ll(T^2)^{1/3}\big(T\cdot(D_0T)^{1+\varepsilon}\big)^{2/3}\ll D_0^{2/3+\varepsilon}T^{2+\varepsilon}.
\]
Here $\alpha_j(L_j^\chi)^3=c\,(L_j^\chi)^3/L(1,\mathrm{sym}^2u_j)$ by (c); we cover $(T,2T]$ by $\lceil T\rceil+1$ windows of length $1$ and, on each, add the non-negative terms of the odd forms and apply (a).

\emph{Conclusion.} Hence $\sum\alpha_jL_jL_j^\chi\ll D_0^{1/3+\varepsilon}T^{2+\varepsilon}$, and the claim follows.
\end{proof}

\subsection{Proof of Theorem~\ref{thm:ns}}\label{sec:nsproof}
Insert Theorem~\ref{thm:heckerel} into Theorem~\ref{thm:nsexact}. The first term is the main term, by Proposition~\ref{prop:main}. Throughout, $\delta=M/(f\sqrt{D_0})\in(0,1]$, and $h$ satisfies \eqref{hbound}.

\emph{Cuspidal part.} By Fact~\ref{F:hecke}, the factor in Theorem~\ref{thm:heckerel} is at most $\tau(f)^2f^{1/2+\theta}$. By Cauchy--Schwarz, Fact~\ref{F:weyl} and Proposition~\ref{prop:ms}, for $T\ge1$,
\[
\sum_{\substack{T<t_j\le2T\\ u_j\ \mathrm{even}}}|u_j(i)\Tr_{D_0}(u_j)|\ll T\cdot\big(D_0^{5/6+\varepsilon}T^{1+\varepsilon}\big)^{1/2}=D_0^{5/12+\varepsilon}T^{3/2+\varepsilon}.
\]
Dyadic summation over $T=2^k$ gives
\[
\ll\tau(f)^2f^{\frac12+\theta}D_0^{\frac5{12}+\varepsilon}\,\delta\sum_{T}T^{\frac32+\varepsilon}\min(1,(\delta T)^{-2})\ll D^{\varepsilon}f^{\frac12+\theta}D_0^{\frac5{12}}\delta^{-\frac12}=D^\varepsilon f^{1+\theta}D_0^{\frac23}M^{-\frac12}.
\]

\emph{Eisenstein part.} On $\Re s=\frac12$, $|D_m(s)|\le\tau(m)\sqrt m$, so the factor in Theorem~\ref{thm:heckerel} is $\le\tau(f)^2f^{1/2}$. By Fact~\ref{F:Eis}, Stirling's formula, Fact~\ref{F:zeta} and Fact~\ref{F:L}(a),
\[
\big|\Tr_{D_0}(E(\cdot,\tfrac12-it))\big|\ll D_0^{\frac14}(1+|t|)^{-\frac12}|\zeta(\tfrac12+it)|(D_0(1+|t|))^{\frac16+\varepsilon},
\]
where the factor $\log(2+|t|)$ from $|\zeta(1-2it)|^{-1}$ has been absorbed into $(1+|t|)^\varepsilon$.
By Cauchy--Schwarz, Fact~\ref{F:weyl} and Fact~\ref{F:zeta},
\[
\int_{T\le|t|\le2T}|E(i,\tfrac12+it)\Tr_{D_0}(E(\cdot,\tfrac12-it))|\dd t\ll D_0^{\frac5{12}+\varepsilon}T^{\frac76+\varepsilon},
\]
and the range $|t|\le1$ contributes $O(D_0^{5/12+\varepsilon})$. Dyadic summation gives $\ll D^\varepsilon f^{1/2}D_0^{5/12}\delta^{-1/6}$, which is smaller than the cuspidal bound since $\delta\le1$.\qed

\subsection{Proof of Corollary~\ref{cor:tern}}\label{sec:tern}
\emph{Reduction.} By Proposition~\ref{prop:par}(b),(c), $\Rr(D')=2N(D')$, and for $4\mid D$
\[
\Rr(D;M)=2N(D;M)-\Rr\big(\tfrac D4;\tfrac M2\big).
\]

\emph{Main terms.} Let $D=4^kD'$ and $f=2^kf'$. Then $D_0\equiv5\bmod8$, so $\chi_{D_0}(2)=-1$ and, for $a\ge1$, $\h_{D_0}(2^af')=(3-2^{1-a})\h_{D_0}(f')$. By Theorem~\ref{thm:ns}, the main term of $\Rr(D;M)$ is $\frac6\pi L(1,\chi_{D_0})\h_{D_0}(f')\Psi_0\cdot r_k$, where $r_0=2$ and $r_k=2(3-2^{1-k})-\frac12r_{k-1}$. Here $\Psi_0(M/2)=\frac12\Psi_0(M)$. By induction $r_k=4-2^{1-k}$.

\emph{Error terms.} The error of the $j$-th step is
\[
\ll(D/4^j)^\varepsilon(f/2^j)^{1+\theta}D_0^{2/3}(M/2^j)^{-1/2}=2^{-j(\frac12+\theta)}\cdot D^\varepsilon f^{1+\theta}D_0^{2/3}M^{-1/2}\cdot 4^{-j\varepsilon},
\]
which sums geometrically. Here $M/2^j\le\sqrt{D/4^j}$ holds throughout. If $M/2^j<2$, only $|x_3|\le3$ contributes, so the remaining count and its main term are $\ll D^\varepsilon$ by the divisor bound, as in the proof of Theorem~\ref{thm:sq}. This is admissible, since $M\le f\sqrt{D_0}$ gives $f^{1+\theta}D_0^{2/3}M^{-1/2}\ge1$.\qed

\section{Numerical checks and open problems}\label{sec:num}

\subsection{Numerical checks}\label{sec:numchecks}
Tables~\ref{tab:sq} and~\ref{tab:ns} compare exact counts with the main terms. We restrict to $x_3>0$, i.e.\ we take $\Phi(t)=e^{-1/((t-1)(2-t))}$ on $(1,2)$ and $0$ elsewhere; this halves both sides for the even extension of $\Phi$. The counts were computed exactly by sieving $D+x_3^2$.

\emph{Square discriminants} (Table~\ref{tab:sq}). The differences are of size $O(\sqrt M)$, far below the error term of Theorem~\ref{thm:sq}. The cases $n=2^{20}$ and $n=2^{23}$ test the term $\beta_0(n)=2-\frac32 2^{-v}$.

\begin{table}[ht]
\centering\small
\begin{tabular}{rrrcrrr}
\toprule
$n$ & $v_2(n)$ & $M$ & $e$ & $\Rr_e(n^2)$ ($x_3>0$) & main term & difference\\
\midrule
1000003 & 0 & 100000 & 0 & 14373.2 & 14339.5 & 33.7\\
1000003 & 0 & 100000 & 1 & 14018.5 & 14029.3 & $-10.8$\\
4849845 & 0 & 484984 & 0 & 68993.6 & 69005.7 & $-12.1$\\
4849845 & 0 & 484984 & 1 & 67507.6 & 67501.2 & 6.4\\
52026975 & 0 & 2000000 & 0 & 327574.3 & 327500.6 & 73.7\\
52026975 & 0 & 2000000 & 1 & 321311.4 & 321296.5 & 14.9\\
2000006 & 1 & 300000 & 0 & 42630.5 & 42589.8 & 40.7\\
2000006 & 1 & 300000 & 1 & 44946.3 & 44916.3 & 30.0\\
4000012 & 2 & 400000 & 0 & 58285.5 & 58288.5 & $-3.0$\\
4000012 & 2 & 400000 & 1 & 62345.6 & 62321.2 & 24.4\\
6000000 & 7 & 1000000 & 0 & 140393.7 & 140390.6 & 3.0\\
6000000 & 7 & 1000000 & 1 & 152709.3 & 152726.2 & $-16.9$\\
1048576 & 20 & 200000 & 0 & 26352.2 & 26333.0 & 19.1\\
1048576 & 20 & 200000 & 1 & 28761.9 & 28814.7 & $-52.8$\\
8388608 & 23 & 800000 & 0 & 119926.6 & 120011.6 & $-85.0$\\
8388608 & 23 & 800000 & 1 & 129954.6 & 129938.2 & 16.4\\
\bottomrule
\end{tabular}
\caption{Square discriminants: $\Rr_e(n^2)$ against the main term of Theorem~\ref{thm:sq}. The odd $n$ are $1000003$ (prime), $4849845=3\cdot5\cdots19$ and $52026975=3^35^27^211^213$.}\label{tab:sq}
\end{table}

\emph{Non-square discriminants} (Table~\ref{tab:ns}). Here $M=10^5$ ($2\cdot10^5$, $3\cdot10^5$ in the last two rows), and $L(1,\chi_{D_0})$ was computed from its rapidly convergent $\mathrm{erfc}/E_1$ series. The rows test:
\begin{itemize}
\item fundamental $D$ in each residue class $1,5\bmod8$ and $0\bmod4$;
\item both values of $\chi_{D_0}(f)$ for $f=2,3$, and $f=35$;
\item the factor $4-2^{1-k}$ of Corollary~\ref{cor:tern} for $k\le3$.
\end{itemize}

\begin{table}[ht]
\centering\small
\begin{tabular}{rrrlrrr}
\toprule
$D$ & $D_0$ & $f$ & count & exact & main term & rel.\ diff.\\
\midrule
100000000005 & $D$ & 1 & $N$ & 901.1 & 894.6 & $+0.7\%$\\
100000000009 & $D$ & 1 & $N$ & 4622.2 & 4619.8 & $+0.05\%$\\
100000000012 & $D$ & 1 & $N$ & 1610.8 & 1606.2 & $+0.3\%$\\
100000000024 & $D$ & 1 & $N$ & 2422.0 & 2435.9 & $-0.6\%$\\
10000000089 & 1111111121 & 3 & $N$ & 4601.5 & 4596.7 & $+0.1\%$\\
10000000053 & 1111111117 & 3 & $N$ & 1513.3 & 1508.5 & $+0.3\%$\\
10000000020 & 2500000005 & 2 & $N$ & 1496.6 & 1490.6 & $+0.4\%$\\
10000000004 & 2500000001 & 2 & $N$ & 2266.0 & 2257.4 & $+0.4\%$\\
99999201225 & 81632001 & 35 & $N$ & 3013.9 & 3037.8 & $-0.8\%$\\
\midrule
100000000005 & $D$ & 1 & $\Rr$, $k=0$ & 1802.2 & 1789.2 & $+0.7\%$\\
10000000020 & 2500000005 & 2 & $\Rr$, $k=1$ & 2242.3 & 2235.9 & $+0.3\%$\\
40000000080 & 2500000005 & 4 & $\Rr$, $k=2$ & 2619.1 & 2608.5 & $+0.4\%$\\
160000000320 & 2500000005 & 8 & $\Rr$, $k=3$ & 5651.0 & 5589.7 & $+1.1\%$\\
900000000045 & 100000000005 & 3 & $\Rr$, $k=0$ & 7170.9 & 7156.7 & $+0.2\%$\\
\bottomrule
\end{tabular}
\caption{Non-square discriminants: exact counts against the main terms of Theorem~\ref{thm:ns} and Corollary~\ref{cor:tern}.}\label{tab:ns}
\end{table}

\emph{Further checks.} Independently of the tables, the following statements were checked numerically; in each case the two sides agree to the accuracy of the computation (relative error at most $10^{-7}$, in most cases far smaller).
\begin{itemize}
\item \emph{Arithmetic and algebra:} Proposition~\ref{prop:par}, pointwise in $x_3$, for all $D\le400$ and $|x_3|\le60$; Lemmas~\ref{conj}, \ref{hecke}(i) and~\ref{lem:nu} by exhaustive search in boxes; Lemma~\ref{lem:mainid} and Corollary~\ref{cor:tern} symbolically.
\item \emph{Eisenstein series:} Fact~\ref{F:E} against lattice sums; Lemmas~\ref{lem:Etheta} and~\ref{lem:Edata}; Lemma~\ref{eisperiod} at three values of $s'$; the value of $c_1$ in Remark~\ref{rem:c1}.
\item \emph{The kernel:} the construction of Lemma~\ref{kernel} by numerical Abel inversion, the identity $h(t)=\hat h(\frac14+t^2)$ for the directly computed Selberg transform, and Lemmas~\ref{Pbound} and~\ref{hbounds}(ii).
\item \emph{Geometric side, square case:} the identity of Proposition~\ref{reduction}, with $F_G$ built from the numerical kernel, against direct counts, for both groups and several $n$.
\item \emph{Geometric side, non-square case:} the unfolding $N(D)=\Tr_D(F)$ in the proof of Theorem~\ref{thm:nsexact}; Theorem~\ref{thm:heckerel} for $F=1$ (720 pairs $(\Delta,p)$), for $F=E(\cdot,2)$ (30 discriminants $f^2D_0$), and for the $\Gamma$-periodised kernel, which is not a Hecke eigenfunction; Fact~\ref{F:gauss}(iv) for 455 fundamental $D_0$; Fact~\ref{F:Eis} at $s=2$ for 18 fundamental $D_0$, odd and even.
\item \emph{Periods of a Maass form:} for the first even Maass cusp form on $\Gamma$ ($t_j=13.7797\ldots$), computed by Hejhal's algorithm, Fact~\ref{F:DIT} for $D_0\in\{5,8,12,13\}$ and the formula for $|\J_j|^2$ in Lemma~\ref{mellin}, with the central values computed by an approximate functional equation.
\end{itemize}
The scripts are available as ancillary files with the arXiv version of this paper.

\subsection{Open problems}
\begin{enumerate}[leftmargin=*]
\item \emph{Beyond the exponent $\frac{71}{96}$.} In both cases the exponent comes only from bounding the Hecke factors ($A_j(n)$, resp.\ the factor at $f$ in Theorem~\ref{thm:heckerel}) for each $j$ separately by Kim--Sarnak. Any bound for $\sum_{t_j\le T}|A_j(n)|\,|u_j(i)\J_j|$ with $T\approx n/M$ that beats Kim--Sarnak on average over $j$ would improve the range.
\item \emph{The full $\Z^3$-count for other non-square $D$.}
\begin{itemize}
\item For non-square $D\equiv1\bmod8$, Proposition~\ref{prop:par}(a) reduces $\Rr_e(D)$ to $N(D)$ and $N_\vartheta(D)$. The latter requires cycle integrals over closed geodesics for the theta group, as in the square case; this is not done here.
\item For $D\equiv2,3\bmod4$ no solution lies in the lattice $x_2\equiv x_3\bmod 2$, and other binary-form models are needed.
\end{itemize}
\item \emph{Other quadrics.} For the sphere $x_1^2+x_2^2+x_3^2=n$ with $x_3$ small, see \cite{HR}. For the two-sheeted hyperboloid $x_1^2-x_2^2-x_3^2=n$ with $x_3$ small, the substitution $x=(A+C,A-C,B)$ gives positive definite forms $[A,B,C]$ of discriminant $-n$, and $|B|/\sqrt n=\sinh\operatorname{dist}(z_\fq,i\R_{>0})$ for the Heegner point $z_\fq$. So the count measures Heegner points near the geodesic from $0$ to $\infty$. This is dual to our setting (geodesics near a point), and P.~Humphries has pointed out that the spectral method should apply in the same way. For the one-sheeted hyperboloid with a small coordinate of the same sign as $n$, for example $x_1^2+x_2^2-x_3^2=n$ with $x_1$ small, $|B|/\sqrt n$ is the cosine of the angle at which the closed geodesic $\ell_\fq$ crosses $i\R_{>0}$. The weight then depends on the direction of $\ell_\fq$ at the crossing, not only on a point of $\ell_\fq$, so the method of this paper does not apply directly. None of this is worked out here.
\item \emph{Limits of the method.} Under the generalised Lindelöf hypothesis and the Ramanujan conjecture, the same argument gives the range $M\ge D^{1/3+\varepsilon}$, uniformly in $f$ and including $D=n^2$. This matches the conditional exponent $\delta<\frac16$ of \cite{Hu}.
\end{enumerate}

\subsection*{Acknowledgement}
We thank Peter Humphries for pointing out \cite[Theorem~4.1]{AW} and \cite[Theorem~11.1]{GHLN}, which removed the restriction to odd $D_0$ in the first version of Theorem~\ref{thm:ns}, and for comments on the relation with \cite{HR}.

\end{document}